\title{Fat flats in rank one manifolds}
\author{D. Constantine}
\address{
Department of Mathematics and Computer Science \\
Wesleyan University \\
Middletown, CT 06459}
\email{dconstantine@wesleyan.edu}
\author{J.-F. Lafont}
\address{Department of Mathematics\\
                 Ohio State University\\
                 Columbus, Ohio 43210}
\email{jlafont@math.ohio-state.edu}
\author{D. B. McReynolds}
\address{Department of Mathematics\\
                Purdue University\\
                 West Lafayette, IN 47907}
\email{dmcreyno@purdue.edu}
\author{D. J. Thompson}
\address{Department of Mathematics\\
                 Ohio State University\\
                 Columbus, Ohio 43210}
\email{thompson.2455@osu.edu}
\date{\today}
\thanks{D.C. thanks the Ohio State University Math Department for hosting him for a semester during which much of this work was completed. J.-F.L. is supported by NSF grant DMS-1510640. D.M. is supported by NSF grant DMS-1408458. D.T. is supported by NSF grant DMS-1461163.}
\newtheorem{thm}{Theorem}[section]
\newtheorem{thmx}{Theorem}
\newtheorem{corx}[thmx]{Corollary}
\newtheorem{lem}[thm]{Lemma}
\newtheorem{prop}[thm]{Proposition}
\newtheorem{cor}[thm]{Corollary}
\theoremstyle{definition}
\newtheorem*{rem}{Remark}
\newtheorem{defn}[thm]{Definition}
\numberwithin{equation}{section}
\def\Pb{\ifmmode{\Bbb P}\else{$\Bbb P$}\fi}
\def\Z{\ifmmode{\Bbb Z}\else{$\Bbb Z$}\fi}
\def\Q{\ifmmode{\Bbb Q}\else{$\Bbb Q$}\fi}
\def\C{\ifmmode{\Bbb C}\else{$\Bbb C$}\fi}
\def\R{\ifmmode{\Bbb R}\else{$\Bbb R$}\fi}
\def\H{\ifmmode{\Bbb H}\else{$\Bbb H\Bbb N$}\fi}
\def\N{\ifmmode{\Bbb N}\else{$\Bbb N$}\fi}
\newcommand{\set}[1]{\left\{#1\right\}}
\newcommand{\abs}[1]{\left\vert#1\right\vert}
\newcommand{\innp}[1]{\left< #1 \right>}
\newcommand{\pr}[1]{\left( #1 \right) }
\def \Sing{\mathrm{Sing}}
\begin{document}

\begin{abstract}

We study closed nonpositively curved Riemannian manifolds $M$ which admit `fat $k$-flats': that is, the universal cover $\tilde M$ contains a positive radius neighborhood of a $k$-flat on which the sectional curvatures are identically zero. We investigate how the {fat $k$-flats} affect the cardinality of the collection of closed geodesics. Our first main result is to construct rank $1$ nonpositively curved manifolds with a {fat $1$-flat} which corresponds to a \emph{twisted cylindrical neighborhood} of a geodesic on $M$. As a result, $M$ contains an embedded closed geodesic with a flat neighborhood, but $M$ nevertheless has only countably many closed geodesics. Such metrics can be constructed on finite covers of arbitrary odd-dimensional finite-volume hyperbolic manifolds. Our second main result is to prove a closing theorem for fat flats, which implies that a manifold $M$ with a {fat $k$-flat} contains an immersed, totally geodesic $k$-dimensional flat closed submanifold. This guarantees the existence of uncountably many closed geodesics when $k \geq 2$.  Finally, we collect results on thermodynamic formalism for the class of manifolds considered in this paper.
\end{abstract}

\maketitle

\setcounter{secnumdepth}{2}

\setcounter{section}{0}


\section{Introduction}

A basic characteristic of a dynamical system is the cardinality of its collection of periodic orbits.  A basic characteristic of a manifold is the cardinality of its collection of closed geodesics. These correspond to the periodic orbits for the geodesic flow of the manifold. It is interesting to see what cardinalities can be achieved for a given class of dynamical systems and to investigate conditions that may give restrictions on what is possible. For compact nonpositively curved manifolds, each free homotopy class of loops contains at least one closed geodesic, so the cardinality of the collection of closed geodesics is at least countably infinite. If a free homotopy class contains two geometrically distinct closed geodesics, then the flat strip theorem implies that the two closed geodesics span an isometrically embedded flat cylinder, giving uncountably many closed geodesics. However, while two distinct closed geodesics in the same homotopy class guarantee the existence of a flat strip, the question of whether the converse holds is much more subtle. We investigate this question in this paper for a large class of rank $1$ manifolds.

Recall that the \emph{rank} of a geodesic in a nonpositively curved Riemannian manifold $M$ is the dimension of the space of parallel Jacobi fields for the geodesic. The \emph{rank of $M$} is the minimum rank over all geodesics for $M$. The celebrated rank rigidity theorem of Ballmann \cite{ballmann-rank} and Burns and Spatzier \cite{burns-spatzier} completely characterizes closed nonpositively curved higher rank manifolds: they are either locally symmetric, or their universal cover splits isometrically  as a product. Thus, all other nonpositively curved Riemannian manifolds are rank $1$.

A {\it fat $k$-flat} in $\tilde M$ is a $k$-flat in $\tilde M$ with a neighborhood isometric to $\mathbb R^k \times B_w$, where $B_w \subset \mathbb R^{n-k}$ is a $w$-radius ball ($w>0$). In the particular case $k=1$, a fat $1$-flat is a geodesic $\tilde \gamma \subset \tilde M$  which has such a neighborhood. We call the  product neighborhood around $\tilde \gamma$ a \emph{flat cylindrical neighborhood} of $\tilde\gamma $. We occasionally abuse terminology and call a closed immersed submanifold in $M$ {\it fat} if it lifts to a fat flat in the universal cover $\tilde M$.   

Nonpositively curved manifolds where the zero sectional curvatures are concentrated on flats are the simplest class of rank $1$ manifolds for which the geodesic flow is not Anosov. This class includes metrics for which the geodesic flow has the weak specification property, for example, if the curvature vanishes only on a finite union of closed geodesics \cite[\S3.1]{CLT}. Manifolds for which the zero sectional curvatures are concentrated on \emph{fat flats} are perhaps the simplest class of rank $1$ manifolds for which the geodesic flow never has the weak specification property \cite[Theorem 3.4]{CLT}. This provides motivation to study this relatively simple class of nonpositively curved manifolds, whose geodesic flows exhibit dynamical behavior fundamentally different from the hyperbolic dynamics that occurs in negative curvature.

For a rank $1$ manifold, one might imagine that a {fat flat} is an obstruction to having {only} countably many closed geodesics in $M$. This is true in the surface case where Cao and Xavier have shown that flat strips close up to an immersion of  the product of $S^1$ with an interval \cite{cao-xavier}. We show that this phenomenon does not persist for odd-dimensional manifolds. Rank $1$ manifolds with flat cylindrical neighborhoods may admit only countably many closed geodesics. On the other hand, we will show that if $\tilde M$ has a {fat $k$-flat} for $k\geq 2$, then this forces the manifold to have an uncountable collection of closed geodesics. 

Our first main result is to construct examples  of rank $1$ manifolds in any odd dimension for which a countable collection of closed geodesics coexists with the presence of a flat cylindrical neighborhood. We call an open neighborhood of a closed geodesic $\gamma$ in $M$ a {\it twisted cylindrical neighborhood} if it lifts to a flat cylindrical neighborhood of $\tilde \gamma$ in $\tilde M$, and  the holonomy around the periodic geodesic $\gamma$, given by a matrix in $\mathrm{SO}(n-1)$, is non-trivial. We rigorously show that the twisting matrix can prevent any geodesic in the twisted cylinder neighborhood other than the central geodesic $\gamma$ from being closed. More precisely, we have the following: 

\begin{thmx} \label{thmA}
Let $M$ be an odd-dimensional, finite volume, hyperbolic $(2n+1)$-manifold. Then 
$M$ has a finite cover $\bar M \rightarrow M$  which supports nonpositively curved Riemannian metrics $g$ so that some closed geodesic $\gamma \subset (\bar M, g)$ has a twisted cylindrical neighborhood $N$ with holonomy having all eigenvalues of the form $e^{i\alpha}$ for $\alpha$ an irrational multiple of $\pi$. In particular:
\begin{enumerate}
	\item the sectional curvatures of $M$ are everywhere $\leq 0$,
	\item the closed geodesic $\gamma$ lifts to a fat 1-flat $\tilde \gamma \hookrightarrow \tilde M$, but 
	\item  there are only countably many closed geodesics in $(\bar M,g)$.
\end{enumerate}
\end{thmx}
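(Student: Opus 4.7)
The plan is to exhibit $\bar M$ as a finite cover of $M$ containing an embedded closed geodesic $\bar\gamma$ whose hyperbolic tubular injectivity radius is large and whose hyperbolic holonomy $A \in \mathrm{SO}(2n)$ already has all eigenvalues of the form $e^{\pm i\alpha_j}$ with $\alpha_j/\pi$ irrational, and then to deform the hyperbolic metric on $\bar M$ inside this tube to a new metric $g$ that is flat near the axis. Because the deformation will respect the $\mathrm{SO}(2n)$-symmetry of the tube, the holonomy of $\bar\gamma$ in the new metric will remain $A$, providing the twisted cylindrical neighborhood.

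For the choice of cover and geodesic, $\Gamma := \pi_1(M)$ is a lattice in $\mathrm{SO}_0(2n+1,1)$, hence Zariski dense; this forces the rotational parts of its loxodromic elements to be dense in $\mathrm{SO}(2n)$, and in particular to hit the dense open subset of matrices whose eigenvalues $e^{\pm i\alpha_j}$ all satisfy $\alpha_j/\pi \notin \Q$. Pick such a $\tau_0 \in \Gamma$, and then use Malcev's residual finiteness together with cyclic subgroup separability to pass to a finite-index subgroup $\pi_1(\bar M)\le \Gamma$ containing $\tau_0$ so that the closed geodesic of $\tau_0$ embeds in $\bar M$ with hyperbolic tubular injectivity radius exceeding $w + W$, for prechosen $w, W > 0$. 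This hyperbolic-geometry input — producing a geodesic whose rotational holonomy avoids rational-angle eigenvalues — is the main technical subtlety of the proof.

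In hyperbolic Fermi coordinates $(r,\sigma,t)$ around the axis $\widetilde{\bar\gamma}$ in the universal cover, the hyperbolic metric reads $dr^2 + \cosh^2(r)\,dt^2 + \sinh^2(r)\,d\sigma^2$; I would replace it with the doubly warped product
\begin{equation*}
g = dr^2 + \phi(r)^2\,dt^2 + \psi(r)^2\,d\sigma^2,
\end{equation*}
where $\phi, \psi$ are smooth with $\phi\equiv 1$, $\psi(r)=r$ on $[0,w]$, with $\phi(r)=\cosh r$, $\psi(r)=\sinh r$ on $[w+W,\infty)$, and with the interpolation on $[w,w+W]$ arranged so that $\phi'' >0$, $\psi''>0$, $\phi'\psi'>0$, and $\psi'\ge 1$. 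Standard warped-product curvature formulas then make all sectional curvatures of $g$ non-positive, and strictly negative outside the flat core $\{r\le w\}$. Since $g$ is $\mathrm{SO}(2n)$-invariant in $\sigma$ and $t$-translation invariant, it is preserved by the deck transformation $(r,\sigma,t)\mapsto (r, A\sigma, t+L)$ and descends to $\bar M$, agreeing with the hyperbolic metric outside the $(w+W)$-tube. A direct parallel-transport computation in the flat core then shows that the holonomy of $\bar\gamma$ with respect to $g$ is still $A$.

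With this in place, (1) is the curvature computation above, and (2) is immediate since $\{r\le w\}$ is isometric to the quotient of $\R^{2n}\times \R$ by the twisted translation $(y,t)\mapsto (Ay, t+L)$. For (3) I would argue by contradiction via the flat strip theorem: two distinct closed geodesics of $(\bar M, g)$ in one free homotopy class must bound a totally geodesic flat strip, which by strict negativity of curvature outside the flat core is forced to lie entirely in $\{r\le w\}$. In the Euclidean universal cover of the core, each boundary line of the strip must be preserved (as an unoriented line) by a common power $\tau^k$ of the deck transformation; the irrationality hypothesis on the $\alpha_j$ makes $A^k-I$ invertible for every $k\ne 0$, forcing both boundary lines to coincide with the central axis and contradicting the strip having positive width. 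Hence each free homotopy class contains a unique closed geodesic, and the countability of $\pi_1(\bar M)$ yields countably many closed geodesics in $(\bar M, g)$.
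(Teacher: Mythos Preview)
Your warped-product construction and the flat-strip argument for part (3) are essentially the paper's approach (the interpolating functions of Proposition~\ref{interpolating-functions}, the curvature check of Proposition~\ref{4d-NPC}, the flattening in Proposition~\ref{flattening-out}, and the holonomy argument in \S\ref{subsec:thmB-proof}), and those parts are fine apart from minor slips (you write $\phi''>0$, $\phi'\psi'>0$ where you need $\geq$, since both vanish on $[0,w]$).

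The genuine gap is in your existence argument for the purely irrational element $\tau_0$. You assert that the set of matrices in $\mathrm{SO}(2n)$ all of whose eigenvalues $e^{\pm i\alpha_j}$ satisfy $\alpha_j/\pi\notin\Q$ is a ``dense open subset.'' It is dense (a countable intersection of Zariski-open sets, by Baire), but it is \emph{not} open: every neighborhood of a purely irrational rotation contains rotations with a root-of-unity eigenvalue. So even granting Euclidean density of the rotational parts in $\mathrm{SO}(2n)$ --- which does not follow from Zariski density of $\Gamma$ alone and is itself nontrivial --- a countable dense set need not meet a dense $G_\delta$ set (compare $\Q\subset\R$, which misses every irrational).

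What makes the argument go through is an arithmetic input you have omitted. Because $\Gamma$ can be conjugated to have entries in a number field $k_\Gamma$, there is a uniform bound $M_\Gamma$ (depending only on $n$ and $[k_\Gamma:\Q]$) such that any root-of-unity eigenvalue of any $\gamma\in\Gamma$ has order $\leq M_\Gamma$ (Lemma~\ref{L:PowerBound}, following Long--Reid). This reduces ``purely irrational'' to avoiding a \emph{finite} union of proper Zariski-closed subvarieties, after which one can actually produce such an element; the paper does so via strong approximation and reduction modulo a large prime (Proposition~\ref{P:HolonomyProp}), or one may invoke Prasad--Rapinchuk directly. Without this arithmetic bound, your density sketch cannot be completed, and you correctly flagged this step as ``the main technical subtlety'' --- it just needs substantially more than what you wrote.
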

The presence of only countably many closed geodesics implies that the flat neighborhood of $\gamma$ {\it cannot} be lifted to a (metrically) product neighborhood in any finite cover of $\bar M$, even though it lifts to such a product in the universal cover $\tilde M$. 

For the examples provided by Theorem \ref{thmA}, the twisted cylinder neighborhood contains uncountably many nonclosed geodesics. In contrast, for smooth rank $1$ \emph{surfaces},  no examples are known for which there exists a non-closed geodesic contained in a flat region for all time. Such geodesics have been ruled out for a large class of rank $1$ surfaces \cite{Wu13}, but the question of  whether such examples can exist in general remains open, and is related to the (difficult) question of ergodicity of the geodesic flow. See \cite{BM13} for an interesting discussion of these issues.  

The proof of Theorem \ref{thmA} falls into two parts. First, we argue that every odd-dimensional finite volume hyperbolic manifold contains a closed geodesic whose holonomy has no finite order eigenvalues. Although this result can be deduced from the work of Prasad--Rapinchuk \cite{PrasadRapinchuk}, we give a self-contained, elementary proof in Section \ref{sec:holonomy}. Once we have such a geodesic, we can pass to a finite cover so that the lift is an embedded geodesic $\gamma$ and gradually ``flatten out'' the metric in a neighborhood of $\gamma$ while keeping the same holonomy. This process is explained in 
\S \ref{sec:warping}. 

Our second main theorem is a closing theorem for {fat $k$-flats}.

\begin{thmx} \label{thmB}
Let $M$ be a closed, nonpositively curved Riemannian manifold, and 
suppose that $\tilde M^n$ contains a {fat $k$-flat}. Then $M^n$ contains
an immersed, totally geodesic, flat $k$-dimensional closed submanifold $N^k\hookrightarrow M^n$ such that
$\tilde N^k$ {is a fat $k$-flat}.
\end{thmx}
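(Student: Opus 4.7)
The plan is to exhibit a discrete subgroup $\Lambda \leq \Gamma := \pi_1(M)$ that preserves the fat $k$-flat $\tilde F$ setwise and acts on $\tilde F \cong \mathbb R^k$ cocompactly by Euclidean isometries. By Bieberbach's theorem, $N^k := \tilde F/\Lambda$ is then a closed flat $k$-manifold, and the composition $\tilde F \hookrightarrow \tilde M \to M$ factors through $N^k$ as a totally geodesic immersion whose lift to the universal cover is $\tilde F$ itself; all conclusions of the theorem then follow.

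To build $\Lambda$, first use compactness of $M$ to produce recurrence of $\tilde F$. Fix a compact fundamental domain $\mathcal D \subset \tilde M$ for $\Gamma$. For each $p \in \tilde F$ choose $\alpha_p \in \Gamma$ with $\alpha_p(p) \in \mathcal D$; then $\alpha_p(\tilde F)$ is a fat $k$-flat of fatness $w$ meeting $\mathcal D$. The space of such flats is precompact in the pointed Hausdorff topology (an Arzel\`a--Ascoli argument using the convexity of $k$-flats in the CAT(0) space $\tilde M$), so by pigeonhole, for any net $\{p_i\}$ of points spreading out in $\tilde F$ infinitely many of the flats $\alpha_{p_i}(\tilde F)$ cluster arbitrarily closely.

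The crux is a rigidity claim: two fat $k$-flats of fatness $w$ whose fat neighborhoods overlap in a sufficiently large region must be parallel. Granting this, $\alpha_{p_i}(\tilde F)$ and $\alpha_{p_j}(\tilde F)$ are parallel for many pairs $i,j$, so $\beta_{i,j} := \alpha_{p_i}^{-1}\alpha_{p_j}$ stabilizes the parallel set $P(\tilde F)$. Since $P(\tilde F)$ splits isometrically as $\tilde F \times T$, the element $\beta_{i,j}$ respects this splitting, and its $\tilde F$-component is a Euclidean isometry sending $p_j$ to within a bounded distance of $p_i$. Choosing the $p_i$ spread out along $k$ linearly independent directions of $\tilde F$ produces Euclidean motions whose translational parts span $\mathbb R^k$; the subgroup they generate, after passing to a finite-index subgroup to kill rotational components and the $T$-factor, is a cocompact crystallographic group $\Lambda$ acting on $\tilde F$.

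The main obstacle is the rigidity step. For $k = 1$ this is the classical flat strip theorem applied in the common flat region: two geodesics each possessing a flat cylindrical neighborhood of radius $w$ that come sufficiently close must bound a flat strip and hence be parallel. For $k \geq 2$ one iterates this direction by direction, or invokes the structure of parallel sets directly: in the overlap of the two fat neighborhoods, both flats appear as affine $k$-planes in a common Euclidean region, and the requirement that each extends globally to a fat flat of fatness $w$ rules out transverse intersections, forcing parallelism of the direction spaces and hence of the flats themselves.
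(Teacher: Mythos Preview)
Your rigidity step---the claim that two fat $k$-flats whose fat neighborhoods overlap substantially must be parallel---is false as stated, and this is the main gap. Consider the simplest counterexample: if $\tilde M$ happens to contain a fat $(k+1)$-flat (or, more drastically, if $\tilde M = \mathbb R^n$), then any two affine $k$-planes inside that flat region are fat $k$-flats of arbitrary fatness, and they can be arbitrarily close at a point while being transverse. Nothing about ``extending globally to a fat flat of fatness $w$'' obstructs this; the fat neighborhoods live in the ambient $\tilde M$, not along one another. So when your translates $\alpha_{p_i}(\tilde F)$ cluster, you have no mechanism to force parallelism, and the construction of the $\beta_{i,j}$ stabilizing $P(\tilde F)$ collapses.

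The paper's proof avoids this by not working with the given fat $k$-flat at all, but passing first to a \emph{maximal} framed fat flat: one whose noncompact factor has the largest possible dimension $k^* \geq k$, and among those, one whose cross-section has maximal $(n-k^*)$-volume (Lemma~\ref{lem:maximal}). Maximality is precisely what makes the rigidity go through. If a translate of the maximal fat flat comes close but is not parallel, Lemma~\ref{lem:thicken} manufactures a flat box with strictly larger cross-sectional volume, and a compactness/selection argument upgrades this to a genuine fat flat, contradicting maximality. If a translate is parallel but displaced, Lemma~\ref{lem:translation} again enlarges the cross-section. Hence nearby translates must actually coincide with $\tilde N$, and Lemma~\ref{lem:submfld} then shows the projection is closed. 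Your scheme could perhaps be repaired by incorporating this maximality device, but as written it attempts to prove rigidity for an arbitrary fat $k$-flat, which cannot work.
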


Note that, although the flat neighborhood of $\tilde N^k$ in the universal cover splits isometrically as a product metric, this might no longer be true at the level of the compact quotient $N^k$. There will be a holonomy representation encoding how the group $\pi_1(N^k)$ acts on the (trivial) normal bundle to $\tilde N^k$.

Theorem \ref{thmB} is established in Section \ref{sec:closing} and generalizes some unpublished work of Cao and Xavier \cite{cao-xavier}, who proved this result in the case $k=1, n=2$. The method of proof is to argue that if $N^k$ does not exist, then we can use a ``maximal'' fat flat $F$ to construct an increasing sequence of \emph{framed flat boxes} (see Definition \ref{flatbox}). We then use a compactness argument to take a limit of these flat boxes, yielding a {fat} $k$-flat strictly larger than $F$, and hence a contradiction. 

Our proof 
crucially uses the fact that the flat in $\tilde M$ is fat. Without this hypothesis, the question of closing flats for $C^\infty$ metrics remains a well-known open problem \cite{BM13}. This question is attributed to Eberlein. Affirmative answers have been obtained for codimension one flats \cite{S90} and when the metric is analytic \cite{BS91}. 

As a corollary of Theorem \ref{thmB}, we obtain the following result about the cardinality of the collection of closed geodesics for a manifold admitting a fat $k$-flat.

\begin{corx}
Let $M$ be a closed, nonpositively curved Riemannian manifold, and suppose that $\tilde M^n$ contains a fat $k$-flat. 
\begin{enumerate}
\item If $k=1$, then there exists a closed geodesic $\gamma$ in $M$ for which the neighborhood of the geodesic has zero sectional curvatures. By Theorem \ref{thmA} {it is possible that there are} no other closed geodesics contained in the flat neighborhood of $\gamma$.
\item If $k \geq 2$, then for all $1\leq l<k$, $M$ contains uncountably many immersed, closed and totally geodesic, flat $l$-submanifolds. In particular, there must be uncountably many closed geodesics.
\end{enumerate}
\end{corx}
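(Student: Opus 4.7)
For part (1), I would apply Theorem~\ref{thmB} with $k=1$. It produces an immersed, closed, totally geodesic $1$-submanifold $N^1 \hookrightarrow M$ whose universal cover is a fat $1$-flat. But such a submanifold is just a closed geodesic $\gamma$, and the fat $1$-flat condition provides a positive-radius neighborhood of $\tilde\gamma$ in $\tilde M$ on which the sectional curvatures vanish identically; projecting to $M$ gives the asserted flat neighborhood of $\gamma$. The second sentence of (1), that there may be no further closed geodesics in that neighborhood, is exactly the content of Theorem~\ref{thmA}.

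For part (2), I would again invoke Theorem~\ref{thmB} to obtain a closed, immersed, totally geodesic flat $k$-submanifold $f\colon N^k \hookrightarrow M$. Since $N^k$ is a closed flat Riemannian manifold, Bieberbach's theorem provides a finite Riemannian cover $\pi\colon T^k\to N^k$ with $T^k=\R^k/\L$ a flat torus. The composition $g=f\circ\pi\colon T^k\to M$ is an immersion of a compact manifold, so by covering $T^k$ with finitely many open sets on which $g$ is injective one gets a uniform bound $m$ on the cardinality of every fiber of $g$.

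Now fix any $1\leq l<k$ and a rational $l$-subspace $V\subset\R^k$, i.e., one for which $V\cap\L$ has rank $l$. The affine translates of $V$ foliate $\R^k$ and descend to a $T^{k-l}$-parameter family of pairwise disjoint, closed, flat, totally geodesic $l$-subtori $\{\Sigma_v\}$ of $T^k$ (closed geodesics when $l=1$); this is an uncountable collection. Each $\Sigma_v$ maps under $g$ to an immersed, closed, totally geodesic, flat $l$-submanifold of $M$. A fiber-counting argument controls coincidences: if $g(\Sigma_{v_1})=\cdots=g(\Sigma_{v_r})$, choose $p_1\in\Sigma_{v_1}$ and set $x=g(p_1)$; each $\Sigma_{v_i}$ then contains some preimage of $x$, and these preimages are pairwise distinct because the $\Sigma_{v_i}$ are disjoint, so $r\leq m$. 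Thus the images in $M$ still form an uncountable collection. The only nonroutine step---and really the only obstacle---is this last distinctness argument, which promotes the obvious uncountable family of parallel subtori in $T^k$ to uncountably many distinct immersed submanifolds of $M$; everything else is a direct unpacking of Theorem~\ref{thmB} combined with Bieberbach's theorem.
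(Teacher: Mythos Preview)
Your proof is correct and follows the same strategy as the paper: apply Theorem~\ref{thmB} to obtain a closed, immersed, totally geodesic flat $k$-submanifold $N$, then extract uncountably many closed totally geodesic $l$-submanifolds from $N$. The paper's own argument is terser---it simply asserts that a closed flat $k$-manifold has uncountably many closed totally geodesic $l$-submanifolds and stops there---whereas your explicit use of Bieberbach's theorem and the fiber-counting bound to ensure the images remain distinct in $M$ fills in a detail the paper leaves implicit.
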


In Section \ref{dynamics}, we add to our study of rank $1$ manifolds with fat flats by collecting results on thermodynamic formalism for the geodesic flow in this setting. This question has been studied recently by Burns, Climenhaga, Fisher, and Thompson \cite{BCFT}. We state the results proved there as they apply to rank $1$ manifolds for which the sectional curvatures are strictly negative away from zero curvature neighborhoods of some fat flats. 



\section{Holonomy of geodesics in hyperbolic manifolds}\label{sec:holonomy}

In this section, we prepare for the proof of Theorem \ref{thmA} by analyzing the possible holonomy along 
geodesics inside finite volume hyperbolic manifolds. Recall that every hyperbolic element $\gamma \in \mathrm{SO}_0(n,1)$ 
is conjugate to a matrix of the form $\begin{pmatrix} T_\gamma & \textrm{0}_{n-1,2} \\ \textrm{0}_{2,n-1} & D_\gamma \end{pmatrix}$ 
where $T_\gamma \in \mathrm{O}(n-1)$, $\textrm{0}_{i,j}$ denotes the $i,j$ matrix with zero entries, and 
$D_\gamma = \mathrm{diag}(\lambda_\gamma,\lambda_\gamma^{-1}) =  \begin{pmatrix} \lambda_\gamma & 0 \\ 0 & \lambda_\gamma^{-1} \end{pmatrix}$ with $\lambda_\gamma \in \R$ and $\abs{\lambda_\gamma} > 1$ (see \cite[Sec 5.1]{LongReid}). 

The hyperbolic element $\gamma$ acts on $\mathbb H^n$ by translation along a geodesic axis. The matrix $D_\gamma$ encodes the 
translational distance along the axis, while the matrix $T_\gamma$ captures the rotational effect around the axis. The matrix 
$T_\gamma$ will be called the {\it holonomy} of the element $\gamma$. The element is said
to be {\it purely irrational} if every eigenvalue of $T_\gamma$ has infinite (multiplicative) order. At the other extreme, the element is said to be {\it purely
translational} if $T_\gamma$ is the identity matrix.

In Section \ref{subsec:key-prop}, we prove the
key proposition, which establishes the existence of closed geodesics with the maximal number of
infinite-order eigenvalues in the holonomy. In Section \ref{subsec:consequences}, we explain how,
at the other extreme, our method can also be used to produce geodesics that are purely hyperbolic
(i.e., whose holonomy map is the identity).

%

\subsection{Geodesics with purely irrational holonomy}\label{subsec:key-prop}
In this section, we will focus on finding hyperbolic elements whose holonomy have the maximal number of eigenvalues with infinite order.
More precisely, we show:

\begin{prop}\label{P:HolonomyProp}
Let $\Gamma < \mathrm{SO}_0(n,1)$ be a lattice with $n \geq 3$. 
\begin{itemize}
	\item[(a)] If $n$ is odd, then there exists a purely irrational element $\gamma \in \Gamma$.
	\item[(b)] If $n$ is even, then there exists a hyperbolic element $\gamma \in \Gamma$ such that $n-2$ of the eigenvalues of $T_\gamma$ have infinite order.
\end{itemize}
\end{prop}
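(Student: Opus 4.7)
The plan is to use the Borel density theorem combined with an algebraic analysis of which roots of unity can occur as holonomy eigenvalues. Set $G = \mathrm{SO}_0(n,1)$. For a hyperbolic $g \in G$, the characteristic polynomial factors as $p_g(x) = (x - \lambda_g)(x - \lambda_g^{-1}) q_g(x)$, where $q_g$ is the characteristic polynomial of the holonomy $T_g \in O(n-1)$. For each $k \geq 1$, the condition that a primitive $k$-th root of unity $\zeta_k$ appears as an eigenvalue of $T_g$ is equivalent to $\gcd(q_g, \Phi_k) \neq 1$, where $\Phi_k$ is the $k$-th cyclotomic polynomial, and is a Zariski-closed condition cut out by the vanishing of a resultant. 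Let $V_k \subset G$ denote the corresponding Zariski-closed subvariety. Each $V_k$ is a proper subvariety, since one can exhibit elements of $G$ in block form with rotation angles avoiding $2\pi/k$. By Borel density, $\Gamma$ is Zariski dense in $G$, so $\Gamma \not\subset V_k$ for any single $k$.

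The main obstacle is that we need to avoid the countable union $\bigcup_k V_k$ simultaneously, while Zariski density only rules out containment in a single proper subvariety. To overcome this, I plan to exploit that $\Gamma$ is finitely generated (as every lattice in a Lie group is), so the field $F \subset \R$ generated by the matrix entries of a fixed generating set is finitely generated over $\Q$. Consequently $F \cap \overline{\Q}$ is a number field; the characteristic polynomials $p_\gamma$ for $\gamma \in \Gamma$ have coefficients in $F$; and since eigenvalues of $\gamma$ have degree at most $n+1$ over $F$, the roots of unity arising as eigenvalues must be primitive $k$-th roots for $k$ in a finite set constrained by the number field $F \cap \overline{\Q}$ and the degree bound. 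This reduces the task to avoiding only finitely many $V_k$, whose union is still a proper Zariski-closed subset of $G$, so Borel density supplies the desired element of $\Gamma$.

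The even-dimensional case requires only a small modification: $T_\gamma \in O(n-1)$ has odd size, hence carries a forced real eigenvalue $\pm 1$, and one settles for $n-2$ infinite-order eigenvalues. One redefines $V_k$ to require a $k$-th root of unity eigenvalue in addition to the forced $\pm 1$, and runs the same argument. The main technical difficulty is the cyclotomic finiteness step in the second paragraph, where one must convert finite generation of $\Gamma$ into a genuine bound on which roots of unity can appear among eigenvalues; this is where the number-theoretic content sits. An alternative route, which may be what the authors intend by ``elementary,'' would be to bypass Borel density altogether and construct the desired element directly by perturbing any given hyperbolic $\gamma_0 \in \Gamma$ via products $\gamma_0^m \delta^\ell$ with a non-commuting $\delta \in \Gamma$, using that the holonomy varies analytically in such one-parameter families to dodge the bad loci.
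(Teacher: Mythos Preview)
Your approach is correct and takes a genuinely different route from the paper's. Both arguments share the same number-theoretic core---the cyclotomic finiteness bound you isolate is exactly the paper's Lemma~\ref{L:PowerBound}, drawn from Long--Reid---but they diverge in how they exploit it. The paper uses strong approximation: it reduces $\Gamma$ modulo large primes $\mathfrak{P}$ to surject onto finite orthogonal groups over $\mathbb{F}_q$, exhibits explicit elements there whose eigenvalues have multiplicative order at least $(q-1)/8 > M_\Gamma$ (Lemma~\ref{L:PowerLemma}), and lifts back to $\Gamma$. You instead invoke Borel density: once only finitely many $V_k$ are relevant, their union is a proper Zariski-closed subset, and Zariski density of $\Gamma$ immediately yields $\gamma \notin \bigcup_k V_k$. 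Your argument is softer and bypasses strong approximation and the structure theory of finite orthogonal groups; the paper's is more hands-on and in principle gives effective control on which $\gamma$ works. Two small points you should tighten: first, define $V_k$ via the full characteristic polynomial $p_g$ rather than the holonomy polynomial $q_g$, since $T_g$ is only well-defined for hyperbolic $g$ and you need $V_k$ to be Zariski-closed in all of $G$ (for hyperbolic $g$ the two conditions agree, since $\lambda_g^{\pm 1}$ are never roots of unity); second, after producing $\gamma \in \Gamma$ with no root-of-unity eigenvalue you must still check that $\gamma$ is hyperbolic---this follows because finite-order elements have only root-of-unity eigenvalues and parabolic elements of a lattice have $1$ as an eigenvalue, but it deserves a sentence.
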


\begin{rem}
Before proceeding, we make a few remarks.
\begin{itemize}
	\item[(i)] When $n$ is even, it is well known that $T_\gamma^2$ must have $1$ as an eigenvalue. In particular, in (b) the hyperbolic element $\gamma \in \Gamma$ has holonomy with the maximum number of eigenvalues with infinite order.
	\item[(ii)] Proposition \ref{P:HolonomyProp} improves on Long and Reid \cite[Thm 1.2]{LongReid}, who proved that there exists a hyperbolic element $\gamma \in \Gamma$ such that $T_\gamma$ has infinite order. 
	\item[(iii)] The proof of Proposition \ref{P:HolonomyProp} extends to any finitely generated Zariski dense subgroup $\Gamma$ of $\mathrm{SO}_0(n,1)$ with coefficients in the algebraic closure $\overline{\Q}$ of $\Q$. 
\end{itemize}
\end{rem}

As noted in \cite{LongReid}, both \cite[Thm 1.2]{LongReid} and Proposition \ref{P:HolonomyProp} can be deduced from 
\cite[Thm 1]{PrasadRapinchuk}. We will instead give an elementary proof of this result. 

\vskip 10pt

To prove Proposition \ref{P:HolonomyProp}, we begin with some background material on nondegenerate bilinear forms over finite fields and their associated isometry groups. We refer the reader to \cite{Omeara} (see also \cite[Sec 3.7]{Wilson}). Throughout, $\mathbb{F}_q$ will denote the unique finite field of cardinality $q=p^t$ where $p \in \N$ is an odd prime and $t \in \N$. There are two nondegenerate bilinear forms $B$ on $\mathbb{F}_q^2$ up to isometry. The isotropic form $B_h$ is given in coordinates by the matrix $\mathrm{diag}(1,1)$. The anisotropic form $B_a$ is given in coordinates by $\mathrm{diag}(1,\alpha)$ where $\alpha \in \mathbb{F}_q^\times - (\mathbb{F}_q)^2$. Alternatively, $B_a$ is the bilinear form associated to the quadratic form given by the norm $N_{\mathbb{F}_{q^2}/\mathbb{F}_q}$, where $\mathbb{F}_{q^2}$ is the unique quadratic extension of $\mathbb{F}_q$, and $N_{\mathbb{F}_{q^2}/\mathbb{F}_q}(\alpha) = \alpha \overline{\alpha}$, where $\overline{\alpha}$ is the Galois conjugate of $\alpha$. For a nondegenerate bilinear form $B$ on $\mathbb{F}_q$, we denote the associated orthogonal and special orthogonal groups by $\mathrm{O}(B,\mathbb{F}_q)$ and $\mathrm{SO}(B,\mathbb{F}_q)$. The group $\mathrm{SO}(B_h,\mathbb{F}_q)$ is a cyclic group of order $q-1$. If $\lambda_q$ is a generator for $\mathbb{F}_q^\times$, then the generator for $\mathrm{SO}(B_h,\mathbb{F}_q)$ can be taken to be conjugate in $\mathrm{GL}(2,\mathbb{F}_q)$ to $\mathrm{diag}(\lambda_q,\lambda_q^{-1})$. The group $\mathrm{SO}(B_a,\mathbb{F}_q)$ is a cyclic group of order $q+1$. If $\lambda_{q,2} \in \mathbb{F}_{q^2}$ is a generator of the cyclic group of elements of norm 1, then we can take a generator of $\mathrm{SO}(B_a,\mathbb{F}_q)$ to be conjugate in $\mathrm{GL}(2,\mathbb{F}_{q^2})$ to $\mathrm{diag}(\lambda_{q,2},\lambda_{q,2}^{-1})$. There are two equivalence classes of bilinear forms on $\mathbb{F}_q^{2n+1}$ given by $B = B_h \oplus \dots \oplus B_h \oplus \innp{\alpha}$ where $\innp{\alpha}$ is a 1-dimension bilinear form given by $\alpha \in \mathbb{F}_q^\times$; the equivalence class is determined by whether $\alpha \in (\mathbb{F}_q)^2$. However, the associated orthogonal and special orthogonal groups are isomorphic. We denote the orthogonal, special orthogonal groups in this case by $\mathrm{O}(2n+1,q)$ and $\mathrm{SO}(2n+1,q)$. Note that $\prod_{i=1}^n \mathrm{SO}(B_h,\mathbb{F}_q) < \mathrm{SO}(2n+1,q)$. On $\mathbb{F}_q^{2n}$, there are two isomorphism types for both $\mathrm{O}(2n,q)$ and $\mathrm{SO}(2n,q)$. The first isomorphism type is associated to the bilinear form $B_{+,n} = B_h \oplus \dots \oplus B_h$ and we denote the associated orthogonal, special orthogonal groups by $\mathrm{O}_+(2n,q)$ and $\mathrm{SO}_+(2n,q)$. In this case, we have $\prod_{i=1}^n \mathrm{SO}(B_h,\mathbb{F}_q) < \mathrm{SO}_+(2n,q)$. The second isomorphism type is associated to the bilinear form $B_{-,n} = B_h \oplus \dots \oplus B_h\oplus B_a$ and we denote the associated orthogonal and special orthogonal groups by $\mathrm{O}_-(2n,q)$ and $\mathrm{SO}_-(2n,q)$. In this case, we have $\pr{\prod_{i=1}^{n-1} \mathrm{SO}(B_h,\mathbb{F}_q)} \times \mathrm{SO}(B_a,\mathbb{F}_q) < \mathrm{SO}_-(2n,q)$.  Thus we obtain the following lemma (see \cite[Sec 3.7.4]{Wilson}).

\begin{lem}\label{L:PowerLemma}
Let $n \in \N$ with $n \geq 1$ and $q = p^t$ for an odd prime $p$.
\begin{itemize}
	\item[(a)] $\mathrm{O}(2n+1,q)$ and $\mathrm{SO}(2n+1,q)$ have an element with $n$ eigenvalues equal to $\lambda_q$, $n$ eigenvalues equal to $\lambda_q^{-1}$, and one eigenvalue equal to $1$.
	\item[(b)] $\mathrm{O}_+(2n,q)$ and $\mathrm{SO}_+(2n,q)$ have an element with $n$ eigenvalues equal to $\lambda_q$ and $n$ eigenvalues equal to $\lambda_q^{-1}$.
	\item[(c)] $\mathrm{O}_-(2n,q)$ and $\mathrm{SO}_-(2n,q)$ have an element with $n-1$ eigenvalues equal to $\lambda_q$, $n-1$ eigenvalues equal to $\lambda_q^{-1}$, and one eigenvalue each equal to $\lambda_{q,2},\lambda_{q,2}^{-1}$.
\end{itemize}
\end{lem}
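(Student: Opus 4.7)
My plan is to construct the desired elements explicitly as block-diagonal elements coming from the subgroup embeddings already identified in the paragraph preceding the lemma. Each of $\mathrm{SO}(2n+1,q)$, $\mathrm{SO}_+(2n,q)$, and $\mathrm{SO}_-(2n,q)$ contains a product of cyclic groups of the form $\mathrm{SO}(B_h,\mathbb{F}_q)$ (with a possible $\mathrm{SO}(B_a,\mathbb{F}_q)$ tail factor), acting block-diagonally on the standard decomposition of the underlying bilinear space. Because the spectrum of a generator of each cyclic factor is explicitly known, taking products of such generators will manufacture an element of the ambient orthogonal group with exactly the prescribed spectrum.

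For part (a), I take $\gamma = (g,g,\dots,g) \in \prod_{i=1}^n \mathrm{SO}(B_h,\mathbb{F}_q) < \mathrm{SO}(2n+1,q)$, where $g$ generates the cyclic group $\mathrm{SO}(B_h,\mathbb{F}_q)$. Since $g$ is $\mathrm{GL}(2,\mathbb{F}_q)$-conjugate to $\mathrm{diag}(\lambda_q,\lambda_q^{-1})$, this contributes $n$ eigenvalues equal to $\lambda_q$ and $n$ eigenvalues equal to $\lambda_q^{-1}$. The remaining one-dimensional summand $\innp{\alpha}$ in the orthogonal decomposition of the form on $\mathbb{F}_q^{2n+1}$ is fixed pointwise by $\gamma$, contributing the last eigenvalue equal to $1$. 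Part (b) is identical, except that no such leftover one-dimensional factor appears.

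For part (c), I set $\gamma = (g,g,\dots,g,h) \in \pr{\prod_{i=1}^{n-1} \mathrm{SO}(B_h,\mathbb{F}_q)} \times \mathrm{SO}(B_a,\mathbb{F}_q) < \mathrm{SO}_-(2n,q)$, where $g$ generates $\mathrm{SO}(B_h,\mathbb{F}_q)$ and $h$ generates the cyclic group $\mathrm{SO}(B_a,\mathbb{F}_q)$. The first $n-1$ factors contribute eigenvalues $\lambda_q,\lambda_q^{-1}$ each with multiplicity $n-1$, while the last factor, which is $\mathrm{GL}(2,\mathbb{F}_{q^2})$-conjugate to $\mathrm{diag}(\lambda_{q,2},\lambda_{q,2}^{-1})$, contributes the pair $\lambda_{q,2},\lambda_{q,2}^{-1}$.

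Since each constructed element lies in the special orthogonal subgroup of every block, it automatically lies in the special orthogonal group of the orthogonal direct sum, and a fortiori in the full orthogonal group. There is no genuine obstacle here: the lemma is essentially a bookkeeping consequence of the cyclic structure of the two-dimensional orthogonal groups combined with the block-diagonal embeddings recalled just above the statement. The only point worth flagging is that the eigenvalues from the anisotropic factor live naturally in $\mathbb{F}_{q^2}$ rather than $\mathbb{F}_q$, which is why the statement of (c) mixes $\lambda_q$ and $\lambda_{q,2}$; this is what will later allow Proposition \ref{P:HolonomyProp} to produce holonomies whose eigenvalues have large multiplicative order.
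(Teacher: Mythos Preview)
Your proposal is correct and matches the paper's approach exactly: the paper does not even supply a formal proof, stating only that the lemma follows from the block-diagonal product embeddings $\prod \mathrm{SO}(B_h,\mathbb{F}_q)$ (and the $\mathrm{SO}(B_a,\mathbb{F}_q)$ factor in case (c)) described in the paragraph immediately preceding the statement, with a reference to \cite[Sec 3.7.4]{Wilson}. Your explicit choice of $(g,\dots,g)$ and $(g,\dots,g,h)$ is precisely the construction the paper is alluding to.
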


Given a lattice $\Gamma < \mathrm{SO}_0(n,1)$ with $n\geq 3$, we can conjugate $\Gamma$ so that the field of definition $k_\Gamma$ is real number field (see \cite[Sec 4.1]{LongReid}).  If $\gamma \in\Gamma$ has an eigenvalue of finite multiplicative order, then the splitting field for the characteristic polynomial will contain a root of unity $\zeta_m$ for some $m$. By \cite[Prop 2.1]{LongReid}, there exists $M_\Gamma \in \N$ depending only on $n$ and $[k_\Gamma:\Q]$ such that $m \leq M_\Gamma$. Thus, we have the following:

\begin{lem}\label{L:PowerBound}
If $\gamma \in \Gamma$ and $\lambda$ is an eigenvalue for $\gamma$, then either $\lambda^m = 1$ for some $m \leq M_\Gamma$ or $\lambda$ has infinite order.
\end{lem}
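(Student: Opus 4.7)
My plan is to observe that Lemma \ref{L:PowerBound} is essentially a reformulation of the paragraph immediately above it. The only content to verify is that a root-of-unity eigenvalue $\zeta_m$ of some $\gamma \in \Gamma$ has its order $m$ bounded by a quantity $M_\Gamma$ depending only on $n$ and $[k_\Gamma:\Q]$. The dichotomy in the lemma statement is trivial: either $\lambda$ has infinite multiplicative order, or $\lambda^m = 1$ for some least positive $m$, and it is this $m$ that must be controlled.

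Having conjugated $\Gamma$ so that its entries lie in the real number field $k_\Gamma$, the characteristic polynomial $p_\gamma(x)$ of any $\gamma \in \Gamma \subset \mathrm{SO}_0(n,1) \subset \mathrm{GL}(n+1,\R)$ lies in $k_\Gamma[x]$ and has degree $n+1$. Thus any eigenvalue $\lambda$ of $\gamma$ is a root of some irreducible factor of $p_\gamma$ over $k_\Gamma$ of degree at most $n+1$. If $\lambda = \zeta_m$ is a primitive $m$-th root of unity, then the minimal polynomial of $\zeta_m$ over $k_\Gamma$ divides its minimal polynomial over $\Q$, namely the $m$-th cyclotomic polynomial, whose degree is $\phi(m)$. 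Tracking degrees gives $\phi(m) = [\Q(\zeta_m):\Q] \leq [k_\Gamma(\zeta_m):\Q] \leq (n+1)\,[k_\Gamma:\Q]$.

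Since $\phi(m) \to \infty$ as $m \to \infty$, this inequality produces a finite upper bound $M_\Gamma$ on $m$ depending only on $n$ and $[k_\Gamma:\Q]$, which is precisely the statement cited from \cite[Prop 2.1]{LongReid}. The lemma then follows at once. There is no serious obstacle here; the only point requiring any care is that an eigenvalue $\lambda$ need not itself lie in $k_\Gamma$, so one works inside the finite extension $k_\Gamma(\lambda)$ and uses that its degree over $k_\Gamma$ is controlled by the degree of the characteristic polynomial of $\gamma$.
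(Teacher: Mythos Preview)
Your proposal is correct and matches the paper's approach exactly: the paper offers no separate proof of this lemma, treating it as an immediate restatement of the preceding paragraph and its citation of \cite[Prop~2.1]{LongReid}. You have simply unpacked that citation by giving the standard Euler-$\phi$ argument, which is a welcome elaboration but not a departure.
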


\begin{proof}[Proof of Proposition \ref{P:HolonomyProp}]
Setting $k_\Gamma$ to be the field of definition for $\Gamma$ and $\mathcal{O}_{k_\Gamma}$ to be the ring of $k_\Gamma$-integers, let $R_\Gamma$ be ring generated over $\mathcal{O}_{k_\Gamma}$ by the matrix coefficients $\Gamma < \mathrm{SO}_0(n,1;k_\Gamma)$. The ring $R_\Gamma = \mathcal{O}_{k_\Gamma}[\mathfrak{p}_1^{-1},\dots,\mathfrak{p}_r^{-1}]$ where $\mathfrak{p}_j < \mathcal{O}_{k_\Gamma}$ are prime ideals. For every prime ideal $\mathfrak{p} < \mathcal{O}_{k_\Gamma}$ with $\mathfrak{p} \ne \mathfrak{p}_1,\dots,\mathfrak{p}_r$, the associated prime ideal $\mathfrak{P} = \mathfrak{p}R_\Gamma < R_\Gamma$ satisfies $R_\Gamma/\mathfrak{P} \cong \mathcal{O}_{k_\Gamma}/\mathfrak{p} \cong \mathbb{F}_q$, where $q=p^t$ for some $t \leq [k_\Gamma:\Q]$. When $n$ is even, we have homomorphisms $r_\mathfrak{P}\colon \mathrm{SO}_0(n,1;R_\Gamma) \to \mathrm{SO}(n+1,q)$ given by reducing the coefficients modulo $\mathfrak{P}$. By strong approximation (see \cite[Thm 5.3 (1)]{LongReid}), there is a cofinite set of prime ideals $\mathfrak{P}$ of $R_\Gamma$ such that $\Omega(n+1,q) \leq r_\mathfrak{P}(\Gamma) < \mathrm{SO}(n+1,q)$, where $\Omega(n+1,q)$ is the commutator subgroup of $\mathrm{SO}(n+1,q)$ and has index two in $\mathrm{SO}(n+1,q)$. When $n$ is odd, all of the above carries over with $\overline{r}_\mathfrak{P}\colon \mathrm{PSO}_0(n,1;R_\Gamma) \to \mathrm{PSO}_{\pm}(n+1,q)$ in place of $r_\mathfrak{P}$. By strong approximation (see \cite[Thm 5.3 (2)]{LongReid}), there is an infinite set of prime ideals $\mathfrak{P}$ of $R_\Gamma$ such that $\Omega_{\pm}(n+1,q) \leq \overline{r}_\mathfrak{P}(\Gamma) < \mathrm{PSO}_{\pm}(n+1,q)$ where $\Omega_\pm(n+1,q)$ is the commutator subgroup of $\mathrm{PSO}_\pm(n+1,q)$ and has index two in $\mathrm{PSO}(n+1,q)$. Additionally, the group $\mathrm{PSO}(n+1,q)$ is the quotient of $\mathrm{SO}(n+1,q)$ by its center which has order $\mathrm{gcd}(4,q-1)$. We now take a prime ideal $\mathfrak{P} < R_\Gamma$ such that $\abs{R_\Gamma/\mathfrak{P}} - 1 = q-1 > 8M$. By Lemma \ref{L:PowerLemma} and the above discussion, there exists $g \in r_\mathfrak{P}(\Gamma), \overline{r}_\mathfrak{P}(\Gamma)$, respectively, such that the following holds. When $n$ is odd, every eigenvalue of $g$ has multiplicative order at least $(q-1)/8$, and when $n$ is even, all but one eigenvalue of $g$ has multiplicative order at least $(q-1)/2$. For any $\gamma \in r_\mathfrak{P}^{-1}(g),\overline{r}_\mathfrak{P}^{-1}(g)$, respectively, we see that either all of the eigenvalues of $\gamma$ have multiplicative order at least $(q-1)/8$ or all but one of the eigenvalues of $\gamma$ have multiplicative order at least $(q-1)/2$; note that the characteristic polynomial $c_\gamma(t) \in R_\Gamma[t]$ of $\gamma$ and the characteristic polynomial $c_g(t) \in \mathbb{F}_q[t]$ of $g$ are related via $c_\gamma(t) = c_g(t) \mod \mathfrak{P}$. By selection of $\mathfrak{P}$ we see that either all of the eigenvalues of $\gamma$ have multiplicative order greater than $M_\Gamma$ or all but one of the the eigenvalues have multiplicative order greater than $M_\Gamma$ depending only on the odd/even parity of $n$. Lemma \ref{L:PowerBound} completes the proof.
\end{proof}

\begin{rem}
When $n$ is odd, so long as $n \ne 7$ or $\Gamma$ is not an arithmetic lattice arising from triality, the infinite set of primes in the proof of Proposition \ref{P:HolonomyProp} can be taken to be a confinite set. When $n=7$ and $\Gamma$ arises from triality, this set can only be taken to be infinite as $\Gamma$ will have infinite many primes with image contained in $\mathrm{G}_2$.
\end{rem}

\subsection{Purely hyperbolic geodesics}\label{subsec:consequences}
Proposition \ref{P:HolonomyProp} provides us with elements whose holonomy has the maximal number of infinite-order eigenvalues.
It is reasonable to ask if one can instead find elements with some prescribed (fewer) number of infinite-order eigenvalues. For example,
if there are no infinite-order eigenvalues, then one is looking for an element whose finite power is purely hyperbolic. Using our Proposition
\ref{P:HolonomyProp}, we obtain the following:

\begin{cor}\label{C:ArithmeticHolonomy}
Let $n,j$ be integers such that $n \geq 3$, $j$ is even, and $0 \leq j \leq n-1$.
\begin{itemize}
\item[(a)]
If $n$ is odd and $j>0$, then for any arithmetic lattice $\Gamma < \mathrm{SO}_0(n,1)$ that does not arise from triality (e.g, if $n \ne 7$), there exists a hyperbolic element $\gamma \in \Gamma$ such that $T_\gamma$ has $j$ eigenvalues with infinite order and has $1$ as an eigenvalue with multiplicity $(n-1)-j$.
\item[(b)]
If $n$ is even, then for any arithmetic lattice $\Gamma < \mathrm{SO}_0(n,1)$, there exists a hyperbolic element $\gamma \in \Gamma$ such that $T_\gamma$ has $j$ eigenvalues with infinite order and has $1$ as an eigenvalue with multiplicity $(n-1)-j$. In particular, $\Gamma$ has a purely hyperbolic element.
\item[(c)]
If $n$ is odd, there exist infinitely many commensurability classes of arithmetic lattices $\Gamma < \mathrm{SO}_0(n,1)$ that have a purely hyperbolic element.
\end{itemize}
\end{cor}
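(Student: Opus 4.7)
The plan is to deduce (a) and (b) from Proposition \ref{P:HolonomyProp}(a) by constructing an arithmetic sublattice of $\Gamma$ supported in a smaller copy of $\mathrm{SO}_0(j+1,1)$, noting that $j+1$ is odd since $j$ is even. For a simplest-type arithmetic lattice $\Gamma$ defined by a quadratic form $f$ of signature $(n,1)$ over a totally real number field $k$, I would choose a $k$-rational subspace $V \subset k^{n+1}$ of dimension $j+2$ on which $f|_V$ has signature $(j+1,1)$, and let $\Gamma' := \mathrm{Stab}_\Gamma(V^\perp)$; this is an arithmetic lattice in $\mathrm{SO}(f|_V, k) \cong \mathrm{SO}_0(j+1,1)$ preserving the totally geodesic $\mathbb{H}^{j+1} \subset \mathbb{H}^n$ determined by $V$. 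Under the hypotheses of (a)--(b), the classification of arithmetic lattices in $\mathrm{SO}_0(n,1)$ (see \cite{LongReid}) permits restriction to this simplest-type setting; for odd $n$, this covers all arithmetic lattices outside the triality construction in dimension $7$.

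Proposition \ref{P:HolonomyProp}(a) applied to $\Gamma'$ produces a purely irrational element $\gamma' \in \Gamma'$, which I view back in $\Gamma < \mathrm{SO}_0(n,1)$. Its axis lies in $\mathbb{H}^{j+1}$, and the $(n-1)$-dimensional normal bundle of the axis splits as the $j$-dimensional normal bundle inside $\mathbb{H}^{j+1}$ plus the $(n-1-j)$-dimensional bundle of directions orthogonal to $\mathbb{H}^{j+1}$. The holonomy $T_{\gamma'}$ respects this splitting: the first block is the $\mathrm{SO}_0(j+1,1)$-holonomy of $\gamma'$, whose $j$ eigenvalues are all of infinite order; the second block is the identity because $\gamma'$ fixes $V^\perp$ pointwise. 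This yields $j$ infinite-order eigenvalues and $(n-1)-j$ eigenvalues equal to $1$, giving (a) and (b). The case $j=0$ in (b) uses the same template with $V$ of signature $(1,1)$: $\Gamma'$ is a cyclic lattice in $\mathrm{SO}_0(1,1)$ generated by a nontrivial unit of the order $\mathcal{O}_k[\sqrt{\alpha}]$ coming from the restricted form, and any nontrivial element extends by $I_{n-1}$ to a purely hyperbolic element of $\Gamma$.

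For (c), I would produce the commensurability classes explicitly. Set $k_i := \mathbb{Q}(\sqrt{p_i})$ for the $i$-th prime and $\Gamma_i := \mathrm{SO}(f_i, \mathcal{O}_{k_i})$, where $f_i$ is a quadratic form over $k_i$ of signature $(n,1)$ at one real place and definite at the others. By the $j=0$ construction, each $\Gamma_i$ contains a purely hyperbolic element, and since the invariant trace field of $\Gamma_i$ contains $k_i$, the $\Gamma_i$ lie in pairwise distinct commensurability classes. The main obstacle I expect is Step 1 in part (b) for even dimensions where ``second-type'' (quaternionic) arithmetic lattices exist: for those, one must verify the existence of arithmetic sublattices of simplest type inside $\mathrm{SO}_0(j+1,1)$, which should follow by restricting the ambient skew-Hermitian form to a rational subspace where it descends to a quadratic form of signature $(j+1,1)$, but requires more care than in the simplest-type case.
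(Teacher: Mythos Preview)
Your overall strategy---locate an arithmetic sublattice inside a totally geodesic copy of $\mathrm{SO}_0(j+1,1)$, apply Proposition~\ref{P:HolonomyProp}(a) there (using that $j+1$ is odd), and read off the block structure of $T_\gamma$---is exactly the paper's approach. The paper packages the existence of such sublattices as Lemma~\ref{L:Subgroups}, citing the classification of arithmetic lattices, whereas you sketch the construction explicitly by restricting the defining quadratic form to a rational subspace. Your treatment of $j=0$ via $\mathrm{SO}_0(1,1)$ is slightly more direct than the paper's use of $\mathrm{SO}_0(2,1)$, and your explicit families over $\mathbb{Q}(\sqrt{p_i})$ for (c) are a fine substitute for the paper's appeal to Lemma~\ref{L:Subgroups}(c).

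There is, however, a genuine gap: you have the parity of the ``second-type'' (quaternionic) construction reversed. For $n$ \emph{even}, $\mathrm{SO}_0(n,1)$ is of type $B$ and \emph{every} arithmetic lattice arises from a quadratic form, so your simplest-type argument already covers all of (b) and your stated ``main obstacle'' does not exist. For $n$ \emph{odd}, $\mathrm{SO}_0(n,1)$ is of type $D$ and there \emph{are} arithmetic lattices arising from skew-Hermitian forms over quaternion algebras, in addition to the triality construction when $n=7$. Your claim that for odd $n$ the simplest-type reduction ``covers all arithmetic lattices outside the triality construction'' is therefore incorrect, and your quadratic-form restriction argument does not handle these quaternionic lattices in part (a). To close this gap you must either invoke the classification result (as the paper does via Lemma~\ref{L:Subgroups}(a), which already incorporates the quaternionic case and explains why only odd-dimensional totally geodesic sublattices are asserted there), or carry out the analogous restriction argument for skew-Hermitian forms to produce the required $\mathrm{SO}_0(j+1,1)$ sublattice.
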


\begin{rem}
Before proving Corollary \ref{C:ArithmeticHolonomy}, we make a few more remarks.
\begin{itemize}
\item[(i)]
Having a hyperbolic element that satisfies any of the properties in Proposition \ref{P:HolonomyProp} or Corollary \ref{C:ArithmeticHolonomy} is a commensurability invariant since these properties are stable under conjugation and finite powers. Hence, we can take $\Gamma < G$ for any $G$ isogenous to $\mathrm{SO}_0(n,1)$ (e.g., $G=\mathrm{Isom}(\H^n)$).
\item[(ii)]
There are examples of arithmetic hyperbolic 3--manifolds without any purely hyperbolic elements. Viewing the group of orientation preserving isometries of hyperbolic 3--space as $\mathrm{PSL}(2,\C)$, a purely hyperbolic element $\gamma \in \mathrm{PSL}(2,\C)$ must have a real trace. Chinburg and Reid \cite{ChinburgReid} constructed infinitely many commensurability classes of arithmetic hyperbolic 3-manifolds for which every nontrivial element has a trace in $\C - \R$. Consequently, one cannot improve (c) in the case of $n=3$. 

\end{itemize}
\end{rem}

In the proof of Corollary \ref{C:ArithmeticHolonomy}, we require the following consequence of the classification of arithmetic lattices in $\mathrm{SO}_0(n,1)$. For a more detailed discussion, we refer the reader to \cite[Sec 6.4]{Witte} (see also \cite{MeyerThesis},  \cite{Meyer}).

\begin{lem}\label{L:Subgroups}
\begin{itemize}
	\item[(a)] If $n$ is odd, then for every arithmetic (cocompact) lattice $\Gamma < \mathrm{SO}_0(n,1)$ that does not arise from triality (e.g, if $n \ne 7$) and every integer $3 \leq 2j+1\leq n$, there exists $\mathbf{G}<\mathrm{SO}_0(n,1)$ with $\mathbf{G} \cong \mathrm{SO}_0(2j+1,1)$ such that $\Delta_j = \mathbf{G} \cap \Gamma$ is a (cocompact) lattice in $\mathbf{G}$.
	\item[(b)] If $n$ is even, then for every arithmetic (cocompact) lattice $\Gamma < \mathrm{SO}_0(n,1)$ and every integer $2 \leq j < n$, there exists $\mathbf{G}<\mathrm{SO}_0(n,1)$ with $\mathbf{G} \cong \mathrm{SO}_0(j,1)$ such that $\Delta_j = \mathbf{G} \cap \Gamma$ is a (cocompact) lattice in $\mathbf{G}$.
	\item[(c)] If $n$ is odd, then there exists infinitely many commensurability classes of arithmetic (cocompact) lattices $\Gamma$ in $\mathrm{SO}_0(n,1)$ with $\mathbf{G}<\mathrm{SO}_0(n,1)$ and $\mathbf{G} \cong \mathrm{SO}_0(2,1)$ such that $\Delta_2 = \mathbf{G} \cap \Gamma$ is a (cocompact) lattice in $\mathbf{G}$.
\end{itemize}
\end{lem}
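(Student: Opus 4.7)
The plan is to invoke the classification of arithmetic lattices in $\mathrm{SO}_0(n,1)$ and realize each $\Delta_j$ as the intersection of $\Gamma$ with the stabilizer of a suitable $k$-rational subspace for the defining quadratic form. By the classification (as developed in \cite[Sec 6.4]{Witte} and \cite{MeyerThesis,Meyer}), outside of the exceptional triality construction available only at $n=7$, every arithmetic lattice $\Gamma<\mathrm{SO}_0(n,1)$ is commensurable (up to conjugation in $\mathrm{Isom}(\H^n)$) with $\mathrm{SO}(f,\mathcal{O}_k)$, where $k$ is a totally real number field and $f$ is a quadratic form on $k^{n+1}$ of signature $(n,1)$ at a distinguished real place and positive definite at every other real place. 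When $\Gamma$ is cocompact, $f$ is additionally anisotropic over $k$, which is automatic whenever $k\neq\Q$ since any isotropic vector would remain a zero of every Galois conjugate of $f$, contradicting positive definiteness at the remaining real places.

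Given this, I would diagonalize $f\cong\langle a_1,\dots,a_n,-a_{n+1}\rangle$ with each $a_i$ totally positive, and let $V\subset k^{n+1}$ be the $k$-subspace spanned by the negative basis vector together with $m$ of the positive basis vectors, where $m=2j+1$ in part (a), $m=j$ in part (b), and $m=2$ in part (c). The restricted form $f_V:=f|_V$ then has signature $(m,1)$ at the distinguished place, is positive definite at every other real place, and is anisotropic over $k$ whenever $f$ is (any subform of an anisotropic form is anisotropic). Setting $\mathbf{G}=\mathrm{SO}(f_V)$, viewed as the $k$-subgroup of $\mathrm{SO}(f)$ fixing $V^\perp$ pointwise, one has $\mathbf{G}(\R)\cong\mathrm{SO}_0(m,1)$ up to a compact factor coming from the other real places of $k$. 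By Borel--Harish-Chandra, $\mathbf{G}(\mathcal{O}_k)$ is a lattice in $\mathbf{G}(\R)$, cocompact if $f_V$ is anisotropic. Since $\Gamma$ is commensurable with $\mathrm{SO}(f,\mathcal{O}_k)$ inside $\mathrm{SO}(f)$, the intersection $\Delta_j:=\mathbf{G}\cap\Gamma$ is commensurable with $\mathbf{G}(\mathcal{O}_k)$ and is therefore a lattice in $\mathbf{G}$ of the same cocompactness type as $\Gamma$.

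For part (c), I would produce infinitely many commensurability classes of cocompact arithmetic lattices in $\mathrm{SO}_0(n,1)$ (for $n$ odd) by varying $k$ over an infinite family of totally real fields---e.g.\ $k=\Q(\sqrt{d})$ for infinitely many squarefree $d>1$---together with an appropriate totally positive form profile of signature $(n,1)$ over each. Distinct invariant trace fields force these lattices into distinct commensurability classes, and the construction of the previous paragraph with $m=2$ then supplies the desired $\mathrm{SO}_0(2,1)$ sublattice in each class.

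The main obstacle is the first step, the classification theorem, which must be invoked as a black box: this is also where the exclusion of the $n=7$ triality case in part (a) enters, since those exceptional lattices do not arise from quadratic forms and there is no ambient form to restrict. Beyond the classification, the only mildly delicate point is verifying that $\mathbf{G}\cap\Gamma$ is commensurable with $\mathbf{G}(\mathcal{O}_k)$, which is a standard consequence of the fact that $\mathbf{G}$ is a reductive $k$-subgroup of the common ambient $k$-group $\mathrm{SO}(f)$ in which $\Gamma$ is commensurable with $\mathrm{SO}(f,\mathcal{O}_k)$.
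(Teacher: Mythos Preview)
The paper does not actually prove this lemma: it is stated as a consequence of the classification of arithmetic lattices in $\mathrm{SO}_0(n,1)$, with references to \cite[Sec 6.4]{Witte} and \cite{MeyerThesis,Meyer} in lieu of an argument. Your sketch therefore goes further than the paper does, and for part (b) (where $n$ is even, so $\mathrm{SO}(n,1)$ is of type $B$ and every $k$-form is indeed the orthogonal group of a quadratic form) and for part (c) (where you only need to exhibit lattices, so the quadratic-form construction suffices) your outline is correct and is exactly the standard argument.

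There is, however, a genuine gap in part (a). Your opening sentence asserts that, outside of triality, every arithmetic lattice in $\mathrm{SO}_0(n,1)$ is commensurable with some $\mathrm{SO}(f,\mathcal{O}_k)$. This is false when $n$ is odd: the group is then of type $D_{(n+1)/2}$, and in addition to the quadratic-form construction there is a second family of arithmetic lattices arising as unitary groups of skew-Hermitian forms over quaternion division algebras $D/k$ (with $D$ split at the distinguished real place). These are not in general commensurable with any $\mathrm{SO}(f,\mathcal{O}_k)$, so your restriction-of-$f$ argument does not apply to them as written. The fix is that the same idea works in the quaternionic setting: diagonalize the skew-Hermitian form $h$ on $D^m$ (where $2m=n+1$), and restrict $h$ to a $D$-submodule of rank $j+1$ chosen to contain the ``negative'' basis vector. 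The resulting $k$-subgroup has real points isomorphic to $\mathrm{SO}_0(2j+1,1)$, and the intersection with $\Gamma$ is a lattice by the same Borel--Harish-Chandra and commensurability reasoning you already gave. Note that this also explains why part (a) only produces sublattices of odd type $\mathrm{SO}_0(2j+1,1)$: quaternionic subspaces have even real dimension, so one cannot in general cut down to $\mathrm{SO}_0(m,1)$ with $m$ even inside a quaternionic lattice.
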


\begin{proof}[Proof of Corollary \ref{C:ArithmeticHolonomy}]
For (a), we apply Lemma \ref{L:Subgroups}(a) for $j+1$, and deduce, for every arithmetic lattice $\Gamma < \mathrm{SO}_0(n,1)$ that does not arise from triality, that there exists $\mathbf{G}<\mathrm{SO}_0(n,1)$ with $\mathbf{G} \cong \mathrm{SO}_0(j+1,1)$ such that $\Delta_j = \mathbf{G} \cap \Gamma$ is a lattice in $\mathbf{G}$. In particular, $\Delta_j$ is conjugate into $\mathrm{SO}_0(j+1,1)<\mathrm{SO}_0(n,1)$ where $\mathrm{SO}_0(j+1,1)$ corresponds to the subgroup 
\[ \set{\begin{pmatrix} \mathrm{I}_{n-j} & \textrm{0}_{n-j,j+1} \\ \textrm{0}_{j+1,n-j} & A \end{pmatrix} ~:~ A \in \mathrm{SO}_0(j+1,1)}. \]
Since $j+1$ is odd, by Proposition \ref{P:HolonomyProp}, we can find $\gamma \in \Delta_j$ such that $T_\gamma$ has $j$ eigenvalues of with infinite order. By construction, the remaining $(n-1)-j$ eigenvalues of $T_\gamma$ are 1. For (b), we apply Lemma \ref{L:Subgroups}(b) for $j+1$, and so for every arithmetic lattice $\Gamma < \mathrm{SO}_0(n,1)$, there exists $\mathbf{G}<\mathrm{SO}_0(n,1)$ with $\mathbf{G} \cong \mathrm{SO}_0(j+1,1)$ such that $\Delta_j = \mathbf{G} \cap \Gamma$ is a lattice in $\mathbf{G}$. The remainder of the proof is identical to part (a). To obtain a purely hyperbolic element, we apply Lemma \ref{L:Subgroups} for $j=0$ to obtain $\Delta_2 < \Gamma$ given by $\Delta_2 = \mathbf{G} \cap \Gamma$ with $\mathbf{G} \cong \mathrm{SO}_0(2,1)$, $\mathbf{G} < \mathrm{SO}_0(n,1)$. As any hyperbolic element $\gamma \in \Delta_2$ is purely hyperbolic (after taking the square if necessary), $\Gamma$ contains a purely hyperbolic element. For (c), by Lemma \ref{L:Subgroups}(c), there exist infinitely many commensurability classes of arithmetic lattices $\Gamma < \mathrm{SO}_0(n,1)$ with $\mathbf{G}<\mathrm{SO}_0(n,1)$ and $\mathbf{G} \cong \mathrm{SO}_0(2,1)$ such that $\Delta_2 = \mathbf{G} \cap \Gamma$ is a lattice in $\mathbf{G}$. As in (b), we conclude that $\Gamma$ contains a purely hyperbolic element.
\end{proof}


\section{Flat cylinders with purely irrational holonomy}\label{sec:warping}

In this section, we complete the proof of Theorem \ref{thmA}. The arguments here are differential geometric in nature: we ``flatten out'' the metric on a hyperbolic manifold inside a neighborhood of a suitably chosen geodesic. In order to do this, we start in Section \ref{subsec:interpol} by constructing smooth functions with certain specific properties. These functions are then used in Section \ref{subsec:warping} to radially interpolate from the hyperbolic metric (away from the geodesic) to a flat metric (near the geodesic). Finally, in Section \ref{subsec:thmB-proof}, we put together all the pieces and establish Theorem \ref{thmA}.

%

\subsection{Some smooth interpolating functions.}\label{subsec:interpol}

In this section, we establish the existence of smooth interpolating functions satisfying certain technical 
conditions. Specifically, we show the following:

\begin{prop}\label{interpolating-functions}
For $R$ sufficiently large, there exist $C^\infty$ functions $\sigma$ and $\tau$ satisfying $\sigma, \tau\geq 0$, $\sigma'\geq 1$, $\tau'\geq 0$ and $\sigma'', \tau'' \geq 0$ for all $r\geq 0$, and
	\[\sigma (r) = \begin{cases}
		\sinh(r), & r\geq R \\
		r, & r\leq 1/R
		\end{cases} 
	\hskip 0.5in 
	\tau (r) =\begin{cases}
		\cosh(r), & r\geq R \\
		1, & r\leq 1/R.
\end{cases} \]
\end{prop}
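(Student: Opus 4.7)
The plan is to construct $\sigma$ by prescribing $\sigma''$ as a smooth nonnegative function and then integrating twice, and to repeat the argument for $\tau$. The requirements $\sigma(r) = r$ on $[0,1/R]$ and $\sigma(r) = \sinh r$ on $[R,\infty)$ force $\sigma'' \equiv 0$ on $[0,1/R]$ and $\sigma''(r) = \sinh r$ on $[R,\infty)$, together with initial data $\sigma(0)=0$, $\sigma'(0)=1$. The matching conditions $\sigma(R)=\sinh R$ and $\sigma'(R)=\cosh R$ at $r=R$ then translate, via twice-integration, into the two moment equalities
\begin{equation*}
\int_0^R \sigma''(u)\,du = \cosh R - 1, \qquad \int_0^R u\, \sigma''(u)\,du = R \cosh R - \sinh R.
\end{equation*}
So the problem reduces to producing a smooth nonnegative $\sigma''$ on $[0,\infty)$ with the prescribed boundary behavior on $[0,1/R]\cup[R,\infty)$ and the prescribed total mass and first moment on the transition region.

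To solve this moment problem, I would start with an initial guess $g_0(r):=\psi(r)\sinh r$, where $\psi\colon[0,\infty)\to[0,1]$ is a smooth cutoff with $\psi\equiv 0$ on $[0,1/R]$ and $\psi\equiv 1$ on $[R,\infty)$, chosen so that its transition sits near $r=R$. This $g_0$ has the correct boundary behavior but undershoots both target moments; the nonnegative deficits are $A := \int_0^R (1-\psi)\sinh$ and $B := \int_0^R u(1-\psi)\sinh$. Adding a smooth correction $\eta\geq 0$ compactly supported in $(1/R,R)$ with $\int\eta = A$ and $\int u\,\eta = B$ will complete the job. Such $\eta$ exists provided the weighted mean $B/A$ lies strictly inside $(1/R,R)$: a short computation gives $B/A \to R-1$ as $R\to\infty$, so for $R$ sufficiently large this condition holds. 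One can then realize $\eta$ concretely as $c_1\phi_1 + c_2\phi_2$ for two smooth nonnegative bumps $\phi_i$ centered at points $r_1<r_2$ in $(1/R,R)$ bracketing $B/A$, with coefficients $c_1,c_2>0$ determined by the resulting nonsingular $2\times 2$ linear system.

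Setting $\sigma''=g_0+\eta$, $\sigma'(r) := 1+\int_0^r \sigma''$, and $\sigma(r):=\int_0^r \sigma'$, the required identities for $\sigma$ on $[0,1/R]$ and $[R,\infty)$ follow directly from the boundary behavior of $\sigma''$ together with the moment conditions, while $\sigma''\geq 0$, $\sigma'\geq 1$, and $\sigma\geq 0$ are immediate from $\sigma''\geq 0$ and the initial values. The construction of $\tau$ is entirely parallel, with initial data $\tau(0)=1,\tau'(0)=0$ and moment conditions $\int_0^R \tau''=\sinh R$, $\int_0^R (R-u)\tau''=\cosh R-1$; the same mean analysis goes through. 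The main obstacle is the moment-matching step, specifically verifying that the barycenter $B/A$ lies strictly inside the admissible interval $(1/R,R)$ so that a smooth nonnegative two-bump correction $\eta$ exists; once that holds, the remaining steps are routine.
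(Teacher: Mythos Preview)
Your proof is correct but takes a genuinely different route from the paper's. The paper writes $\sigma$ and $\tau$ directly as convex combinations $\sigma(r)=\rho(r)\sinh r+(1-\rho(r))r$ and $\tau(r)=\rho(r)\cosh r+(1-\rho(r))$, where $\rho$ is an explicit smooth monotone transition built from the standard compactly supported bump $e^{-k^2/(k^2-x^2)}$. The boundary behavior and the inequalities $\sigma,\tau\geq 0$, $\sigma'\geq 1$, $\tau'\geq 0$ are then easy; the only nontrivial step is verifying convexity, which reduces to the pointwise inequality $\rho''(r)+\rho(r)\geq 0$, checked by hand for the explicit $\rho$ once the bump parameter satisfies $k\geq 18$.

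Your approach inverts the difficulty: by prescribing $\sigma''$ (and $\tau''$) as a nonnegative function from the outset and integrating twice, the convexity and monotonicity conditions come for free, and the work shifts to a two-moment matching problem on $[0,R]$, which you solve by a cutoff plus a nonnegative two-bump correction. This is arguably cleaner conceptually, since nothing needs to be verified after the construction; the paper's method, on the other hand, yields a single closed formula and an explicit threshold for $R$. Your asymptotic claim $B/A\to R-1$ depends on exactly where you place the transition of $\psi$ and is not quite precise as stated, but the point you actually need---that the barycenter $B/A$ of the nonnegative weight $(1-\psi)\sinh$ lies strictly inside $(1/R,R)$ for large $R$---is immediate, so the argument goes through.
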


\begin{proof}

We start by taking the function:
\[	
	f_k(r) = 
		\begin{cases}			
			e^{\frac{-k^2}{k^2-x^2}}, & |x|\leq k \\
			
			0, & |x|> k.
		\end{cases}
\]
and form the functions $F_k(x) = \int_{-k}^x f_k(s)ds$ and $\rho(r) = F_k(r-(k+\frac{1}{k}))/F_k(k)$ for a value of $k$ to be chosen.
Finally, we form the functions $\sigma(r) = \rho(r)\sinh(r) + (1-\rho(r))r$ and $\tau(r) = \rho(r)\cosh(r) + (1-\rho(r))$.

It is straightforward to check that these functions $\sigma$ and $\tau$ have the correct large scale and small scale behavior 
(with $R=2k+1$), and it is also easy to check that $\sigma \geq 0$, $\tau \geq 0$, $\sigma ' \geq 1$, and $\tau ' \geq 0$.
The only subtlety lies in verifying the sign of $\sigma''$ and $\tau''$. For both of these, we can differentiate and see that the 
second derivative consists of a nonnegative term plus $\rho''(r) + \rho(r)$ times another nonnegative term. So establishing convexity
of both functions reduces to checking the inequality $\rho''(r) + \rho(r) \geq 0$ for all $r$, which we can easily verify holds 
whenever the parameter $k$ is at least $18$. These routine computations are left to the reader.
\end{proof}

%

\subsection{Some curvature estimates.}\label{subsec:curvature}

We provide a walk-through of some curvature computations that will be needed in
the proof of Theorem \ref{thmA}. We use the coordinate system $\{r, \theta, \phi, z\}$ on $\R^4 = \R^3 \times \R$
where $\{r, \theta, \phi\}$ are standard spherical coordinates on the $\R^3$ factor. In terms of this coordinate system,
consider the metric given by
\begin{equation}
h:=dr^2 + \sigma ^2(r) d\theta ^2 + \sigma ^2(r) \sin ^2(\theta) d\phi ^2 + \tau ^2(r) dz ^2,
\end{equation}
where $\sigma$ and $\tau$ are the functions constructed in Proposition \ref{interpolating-functions}. Note that $d\theta ^2 + \sin ^2(\theta) d\phi ^2$ is the standard round metric on $S^2$. This doubly warped product
metric will appear in the proof of Proposition \ref{flattening-out} in our next section. 

\begin{prop}\label{4d-NPC}
The metric $h$ on $\R^4$ has nonpositive sectional curvature.
\end{prop}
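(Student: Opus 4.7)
The plan is to exploit the multiply warped product structure of $h$. Introduce the orthonormal frame
\[
e_0 = \partial_r, \qquad e_1 = \sigma^{-1}\partial_\theta, \qquad e_2 = (\sigma\sin\theta)^{-1}\partial_\phi, \qquad e_3 = \tau^{-1}\partial_z,
\]
and read off the Christoffel symbols of $h$ (the metric is diagonal in the given coordinates, so this is routine). A direct computation then yields the sectional curvatures of the six coordinate 2-planes:
\[
K(e_0, e_1) = K(e_0, e_2) = -\frac{\sigma''}{\sigma}, \qquad K(e_0, e_3) = -\frac{\tau''}{\tau},
\]
\[
K(e_1, e_2) = \frac{1 - (\sigma')^2}{\sigma^2}, \qquad K(e_1, e_3) = K(e_2, e_3) = -\frac{\sigma'\tau'}{\sigma\tau}.
\]
Each is manifestly nonpositive by Proposition \ref{interpolating-functions}: the bounds $\sigma'', \tau'' \geq 0$ and $\sigma, \tau > 0$ handle the first three curvatures; $\sigma' \geq 1$ forces $K(e_1,e_2) \leq 0$; and $\sigma',\tau' \geq 0$ handle the last two.

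To pass from these six coordinate curvatures to \emph{all} sectional curvatures, the key point is to show that the Riemann tensor is diagonal in the frame $\{e_a\}$, meaning $R(e_a, e_b, e_c, e_d) = 0$ unless $\{a,b\} = \{c,d\}$ as unordered pairs. The cleanest route is to use the three reflection isometries of $h$,
\[
\phi \mapsto -\phi, \qquad \theta \mapsto \pi - \theta, \qquad z \mapsto -z,
\]
each of which preserves the metric and independently flips exactly one of $e_2$, $e_1$, $e_3$ while fixing the others. Since $R$ is invariant under isometries, any component $R_{abcd}$ in which some index from $\{1,2,3\}$ appears an odd number of times must vanish; together with the usual skew-symmetries this kills every component except the six $R_{abab}$ already computed. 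The main technical obstacle of the proof lies in this diagonality argument; as an alternative one can invoke general curvature formulas for multiply warped products with one-dimensional base and constant-curvature fibers, but the parity argument is more transparent.

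With the diagonality of $R$ established, for any orthonormal pair $X = \sum x^a e_a$, $Y = \sum y^a e_a$ the Lagrange identity gives
\[
K(X,Y) = \sum_{a < b} (x^a y^b - x^b y^a)^2 \, K(e_a, e_b), \qquad \sum_{a<b}(x^a y^b - x^b y^a)^2 = |X|^2|Y|^2 - \langle X, Y\rangle^2 = 1,
\]
exhibiting $K(X,Y)$ as a convex combination of the six nonpositive basic sectional curvatures, hence $K(X, Y) \leq 0$. The coordinate singularities at $r=0$ and $\theta \in \{0, \pi\}$ are standard spherical-coordinate artifacts and require no separate treatment: near $r=0$ the metric is genuinely flat Euclidean by the construction of $\sigma,\tau$, and the $\theta$-loci are just the poles of the round $S^2$-factor, so smoothness across both is automatic.
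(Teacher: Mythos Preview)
Your approach matches the paper's: both compute the six ``diagonal'' curvature components, verify they are nonpositive using the properties of $\sigma$ and $\tau$ from Proposition~\ref{interpolating-functions}, establish that all other components of $R$ vanish, and then express an arbitrary sectional curvature as a nonnegative combination of these six. The paper obtains the diagonality by direct computation (referring to a preprint version for details), whereas you argue by parity under reflections; the final step is the same, just phrased via the Lagrange identity rather than an explicit expansion in coordinates.

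There is one small gap in your reflection argument. The maps $\phi \mapsto -\phi$ and $z \mapsto -z$ can be recentered to $\phi \mapsto 2\phi_0 - \phi$ and $z \mapsto 2z_0 - z$ and thus made to fix any basepoint, since the metric is independent of $\phi$ and $z$. But the metric genuinely depends on $\theta$, so $\theta \mapsto \pi - \theta$ cannot be recentered; it only fixes points with $\theta = \pi/2$. At a general point $p$ it sends $p$ to a distinct point $p'$, and isometry invariance of $R$ then only gives $R_{abcd}(p) = -R_{abcd}(p')$, not $R_{abcd}(p) = 0$. This leaves the components $R_{0212}$ and $R_{0313}$ (those with index $1$ appearing once and index $2$ or $3$ appearing twice) unaccounted for. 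The fix is easy: either note that the full isometry group of the round $S^2$ contains, at \emph{every} point, a reflection flipping $e_1$ while fixing $e_2$ (the reflection through the geodesic tangent to $e_2$), or use transitivity of $\mathrm{O}(3)$ on $S^2$ to reduce to checking at $\theta = \pi/2$, where your stated reflection does fix the point. With that patch your argument is complete and equivalent to the paper's.
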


\begin{proof}
At every point not on the $z$-axis, an ordered basis for the tangent space is given by $\{ \frac{\partial}{\partial r}, \frac{\partial}{\partial \theta}, 
\frac{\partial}{\partial \phi}, \frac{\partial}{\partial z} \}$. Abusing notation, we will use the indices $r, \theta, \phi, z$ to denote the 
corresponding vectors in the ordered basis.

Some straightforward computations then show that the only nonzero components of the curvature $4$-tensor
$(u,v,w,z)=\langle R(u,v)w, z\rangle$ are given by:
\begin{align}\label{curvature1}
	R_{\theta r \theta r} &= - \sigma {\sigma''} &R_{\phi r \phi r} &= - \sigma \sigma '' \sin ^2 (\theta)\\
	R_{zr zr} & = - \tau {\tau ''} & R_{\phi \theta \phi \theta} &= \left(1 - (\sigma ')^2 \right)\sigma ^2 \sin ^2(\theta)\\
	\label{curvature3}R_{\theta z \theta z} & = - \sigma \sigma ' \tau \tau ' & R_{\phi z \phi z} &= - \sigma \sigma ' \tau \tau ' 			\sin^2(\theta)
\end{align}
For reference, these computations were included in a preprint version of this paper\footnote{Version 1 of arXiv:1704.00857 }. Similar computations appear in \cite[\S4.2.4]{pP16}
for doubly warped product metrics of spherical metrics, which differ slightly from the metric here, where one of the warping directions is flat.
From the properties of the functions $\sigma$ and $\tau$ given by Proposition \ref{interpolating-functions} it is now immediate that all 
six of these tensor components are nonpositive. 

Finally, we recall that, for an arbitrary tangent $2$-plane $H$, the sectional curvature $K(H)$ is computed by choosing any two linearly
independent vectors $u,v$, and calculating 
$$K(H):=\frac{(u,v,u,v)}{||u||\cdot ||v|| - \langle u, v \rangle}$$
(this expression is independent of the choice of vectors). In our setting, if $H$ is an arbitrary tangent $2$-plane, we can choose
orthonormal $u, v\in H$, and express them as a linear combination of the coordinate vectors $u= u_r \frac{\partial}{\partial r} +
u_\theta \frac{\partial}{\partial \theta} + u_\phi \frac{\partial}{\partial \phi} + u_z \frac{\partial}{\partial z}$ and $v= v_r \frac{\partial}{\partial r} +
v_\theta \frac{\partial}{\partial \theta} + v_\phi \frac{\partial}{\partial \phi} + v_z \frac{\partial}{\partial z}$. Then the sectional curvature 
of $K(H)$ computes to:
\begin{align*}
K(H) &= (u,v,u,v) = \langle R(u,v)u,v\rangle \\
&= (u_r^2v_\theta^2 + u_\theta^2 v_r^2)R_{\theta r\theta r} + (u_z^2v_r^2 + u_r^2v_z^2)R_{zrzr} + (u_z^2v_\theta^2 + u_\theta^2 v_z^2)R_{\theta z\theta z} \\
& +  (u_r^2v_\phi^2 + u_\phi^2 v_r^2)R_{\phi r\phi r} +  (u_\theta^2v_\phi^2 + u_\phi^2 v_\theta^2)R_{\phi \theta \phi \theta} + (u_z^2v_\phi^2 + u_\phi^2 v_z^2)R_{\phi z\phi z},
\end{align*}
which is a positive linear combination of six nonpositive expressions: see equations (\ref{curvature1})--(\ref{curvature3}). 
Thus we conclude that $K(H) \leq 0$. 
Since this holds for arbitrary tangent $2$-planes, this metric is nonpositively curved away from the $z$-axis, and a continuity argument
then shows it is nonpositively curved everywhere. This completes the proof of the proposition.
\end{proof}

As a corollary of the previous computation, we immediately obtain the following three-dimensional estimate. Consider $\mathbb R^3$
with cylindrical coordinates $\{r, \theta , z\}$, and with Riemannian metric
\begin{equation}
g:=dr^2 + \sigma ^2(r) d\theta ^2 + \tau ^2(r) dz ^2.
\end{equation}
This special metric on $\mathbb R^3$ will be used in the proof of Proposition \ref{flattening-out} (it is the $n=1$ case
in equation (\ref{warping})).

Recall that the fixed point set of any isometry is totally geodesic. Since $(\R ^3, g)$ is isometric to the fixed point set
of the isometric involution $(r, \theta, \phi, z) \mapsto (r, \theta , \pi - \phi, z)$ defined on $(\R^4, h)$, we immediately 
obtain the following:

\begin{cor}\label{3d-NPC}
The metric $g$ on $\R ^3$ has nonpositive sectional curvatures.
\end{cor}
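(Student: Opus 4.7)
The plan is to follow the hint in the paper's setup: realize $(\mathbb R^3,g)$ as the fixed point set of an isometric involution of $(\mathbb R^4,h)$, and invoke the general fact that fixed point sets of isometries are totally geodesic (so the induced sectional curvatures are sectional curvatures of the ambient space along tangent $2$-planes, which are already nonpositive by Proposition \ref{4d-NPC}).

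First I would define the map $\iota\colon \mathbb R^4\to\mathbb R^4$ by $(r,\theta,\phi,z)\mapsto(r,\theta,\pi-\phi,z)$ and verify that $\iota^*h=h$: the only coordinate affected is $\phi$, and the coefficient $\sigma^2(r)\sin^2(\theta)$ of $d\phi^2$ is unchanged, while $d(\pi-\phi)^2=d\phi^2$; none of the other terms in $h$ involve $\phi$. Hence $\iota$ is an isometric involution. Its fixed point set $F\subset\mathbb R^4$ is the hypersurface $\{\phi=\pi/2\}$, which is diffeomorphic to $\mathbb R^3$ via the coordinates $(r,\theta,z)$. On $F$ we have $\sin^2(\theta)=\sin^2(\pi/2)\cdot\text{(nothing)}$ — more precisely, restricting $h$ to $F$ kills $d\phi$ entirely, giving
\begin{equation*}
h\big|_F = dr^2 + \sigma^2(r)\,d\theta^2 + \tau^2(r)\,dz^2 = g.
\end{equation*}
So $(F,h|_F)$ is isometric to $(\mathbb R^3,g)$.

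Now I would invoke the standard fact that the fixed point set of an isometry of a Riemannian manifold is a totally geodesic submanifold. For a totally geodesic submanifold, the Gauss equation reduces the intrinsic sectional curvature of any tangent $2$-plane $H\subset T_pF$ to the ambient sectional curvature of $H$ viewed as a $2$-plane in $T_p\mathbb R^4$. By Proposition \ref{4d-NPC}, this ambient sectional curvature is $\leq 0$, so every sectional curvature of $(\mathbb R^3,g)$ is nonpositive.

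There is no real obstacle here — the only thing to be mindful of is the degeneracy of the spherical coordinates on the $z$-axis (where $r=0$), but this is handled exactly as in Proposition \ref{4d-NPC}: nonpositivity holds off the axis, and a continuity argument extends it to all of $\mathbb R^3$. The whole corollary is essentially a codimension-one reduction from the doubly warped $4$-dimensional computation to the singly warped $3$-dimensional metric of interest.
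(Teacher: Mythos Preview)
Your proof is correct and is exactly the argument the paper gives: realize $(\mathbb R^3,g)$ as the fixed point set of the isometric involution $(r,\theta,\phi,z)\mapsto(r,\theta,\pi-\phi,z)$ of $(\mathbb R^4,h)$, use that fixed point sets of isometries are totally geodesic, and inherit nonpositive curvature from Proposition~\ref{4d-NPC}. One minor wording issue: the fixed set in Cartesian terms is the full hyperplane $\{x=0\}\cong\mathbb R^3$, which in spherical coordinates corresponds to $\phi\in\{\pi/2,3\pi/2\}$ together with the axis, not just the half described by $\phi=\pi/2$; this is harmless since the induced metric on the full fixed hyperplane is still $g$ (with the cylindrical angle ranging over $[0,2\pi)$).
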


%

\subsection{From twisted hyperbolic to twisted flat neighborhoods.}\label{subsec:warping}

Let $\rho \in \mathrm{SO}(2n)$ be a purely irrational rotation (i.e., one whose eigenvalues all have infinite order), and let $\ell>0$ be a positive real number. Form the space $C_{\ell, \rho}$ as follows. First consider the quadratic equation
	$$f(t)=t^2 - 2\cosh(\ell/2) t +1,$$
and let $\lambda >1$ be the larger of the two real roots of this polynomial. Form the diagonal matrix $D(\ell) = \mathrm{diag}(\lambda,\lambda) =  \begin{pmatrix} \lambda & 0 \\ 0 & \lambda^{-1} \end{pmatrix}$ and consider the element $g:= \begin{pmatrix} \rho & \textrm{0}_{n-1,2} \\ \textrm{0}_{2,n-1} & D(\ell) \end{pmatrix}$ where again $\textrm{0}_{i,j}$ denotes the $i \times j$ matrix with zero entries. This is an element in $\mathrm{SO}_0(2n+1,1) = \text{Isom}(\mathbb H^{2n+1})$ and so acts on hyperbolic space $\mathbb H^{2n+1}$. Finally, we define the space $C_{\ell, \rho}$ to be the quotient of $\mathbb H^{2n+1}$ by the $\mathbb Z$-action generated by the isometry $g$. The hyperbolic isometry $g$ leaves invariant a unique geodesic $\tilde \gamma$,
whose image in $C^{2n+1}_{\ell, \rho} = \mathbb H^{2n+1}/\langle g\rangle$ is the unique closed embedded geodesic $\gamma \hookrightarrow C^{2n+1}_{\ell, \rho}$. By construction, the length of $\gamma$ is $\ell$, and the holonomy along $\gamma$ is given by the matrix $\rho \in \mathrm{SO}(2n)$ (with respect to an 
appropriate choice of basis). The following result is immediate.

\begin{lem}\label{standard-nbhd}
Let $M$ be an arbitrary hyperbolic $(2n+1)$-manifold, and let $\eta \hookrightarrow M$ be an embedded closed geodesic with length $l$, holonomy $\rho \in \mathrm{SO}(2n)$, and normal injectivity radius $R>0$. Then the $R$-neighborhood of $\eta \hookrightarrow M$ is isometric to the $R$-neighborhood of $\gamma \hookrightarrow C^{2n+1}_{l,\rho}$.
\end{lem}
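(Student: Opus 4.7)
The plan is to verify the lemma by comparing both $R$-neighborhoods to a common model in $\mathbb{H}^{2n+1}$: the $R$-tube around the axis of a normal-form hyperbolic isometry. Concretely, I would lift everything to the universal cover, exhibit a single conjugating isometry, and push this down to an isometry of the two neighborhoods.

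First, lift $\eta$ to a geodesic $\tilde\eta\subset\mathbb{H}^{2n+1}$, whose stabilizer in $\pi_1(M)<\mathrm{Isom}(\mathbb{H}^{2n+1})$ is cyclic, generated by a hyperbolic isometry $h$ of translation length $\ell$ and rotational part $\rho$. By the normal form recalled at the start of Section \ref{sec:holonomy}, any such $h$ is conjugate in $\mathrm{Isom}(\mathbb{H}^{2n+1})$ to the standard element $g$ appearing in the definition of $C^{2n+1}_{\ell,\rho}$: both have the same translation length (hence the same eigenvalue pair $\lambda,\lambda^{-1}$ along the axis) and the same rotational block $\rho\in\mathrm{SO}(2n)$. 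Choose $\psi\in\mathrm{Isom}(\mathbb{H}^{2n+1})$ with $\psi h\psi^{-1}=g$; then $\psi(\tilde\eta)=\tilde\gamma$, the axis of $g$.

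Second, let $\tilde U_R\subset\mathbb{H}^{2n+1}$ denote the open $R$-neighborhood of $\tilde\eta$ and $\tilde V_R$ that of $\tilde\gamma$. The hypothesis that the normal injectivity radius of $\eta$ equals $R$ says exactly that the restriction of the covering projection $\mathbb{H}^{2n+1}\to M$ to $\tilde U_R$ is injective modulo the stabilizer of $\tilde\eta$: no element of $\pi_1(M)$ outside $\langle h\rangle$ carries a point of $\tilde U_R$ into $\tilde U_R$. Thus the $R$-neighborhood of $\eta$ in $M$ is isometric to $\tilde U_R/\langle h\rangle$. On the other side, $g$ acts freely on $\mathbb{H}^{2n+1}$ with axis $\tilde\gamma$, and because the displacement function of $g$ attains its minimum precisely on $\tilde\gamma$ and is proper, no nontrivial power of $g$ moves a point of $\tilde V_R$ back into $\tilde V_R$ for any $R>0$; in particular, the $R$-neighborhood of $\gamma$ in $C^{2n+1}_{\ell,\rho}$ is isometric to $\tilde V_R/\langle g\rangle$.

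Finally, the isometry $\psi$ sends $\tilde U_R$ onto $\tilde V_R$ isometrically (being an isometry of $\mathbb{H}^{2n+1}$ that carries $\tilde\eta$ to $\tilde\gamma$) and intertwines the two cyclic group actions, since $\psi\langle h\rangle\psi^{-1}=\langle g\rangle$. Therefore $\psi$ descends to an isometry $\tilde U_R/\langle h\rangle\to\tilde V_R/\langle g\rangle$, which is exactly the desired identification of the $R$-neighborhood of $\eta\hookrightarrow M$ with the $R$-neighborhood of $\gamma\hookrightarrow C^{2n+1}_{\ell,\rho}$. The only real content beyond unwinding definitions is the normal-form classification of hyperbolic isometries of $\mathbb{H}^{2n+1}$ by $(\ell,\rho)$, which is standard and already invoked in the paper; accordingly I do not anticipate a serious obstacle, and the lemma should be essentially formal.
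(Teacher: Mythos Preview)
Your approach is the standard one and matches what the paper intends: it states the lemma as ``immediate'' and gives no proof, so your lift--conjugate--descend argument is exactly the expected justification.

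One misstatement to fix: you write that ``no nontrivial power of $g$ moves a point of $\tilde V_R$ back into $\tilde V_R$,'' but this is false --- every power of $g$ preserves the axis $\tilde\gamma$ and hence preserves its $R$-tube $\tilde V_R$. Fortunately your conclusion does not need that claim. Since $C^{2n+1}_{\ell,\rho}=\mathbb{H}^{2n+1}/\langle g\rangle$ and $g$ preserves distance to $\tilde\gamma$, the full preimage of the $R$-neighborhood of $\gamma$ is exactly $\tilde V_R$, so that neighborhood is isometric to $\tilde V_R/\langle g\rangle$ immediately; no displacement-function argument is required. With that sentence corrected, the proof is fine.
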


We now explain how to ``flatten out'' the hyperbolic metric on $C_{\ell, \rho}$ near the periodic geodesic $\gamma$, while retaining the same holonomy. This is the content of the following:

\begin{prop}\label{flattening-out}
If $R>{37}$, and $n\geq 1$, then $C^{2n+1}_{\ell, \rho}$ supports a Riemannian metric $g$ with the following properties:
\begin{enumerate}
\item outside of the $R$-neighborhood of $\gamma$, $g$ coincides with the hyperbolic metric,
\item inside the $(1/R)$-neighborhood of $\gamma$, $g$ is flat, 
\item the sectional curvature on every $2$-plane is nonpositive, and
\item the holonomy around $\gamma$ is given by the matrix $\rho$.
\end{enumerate}
\end{prop}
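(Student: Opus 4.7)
The plan is to construct $g$ by radially warping the hyperbolic metric in Fermi coordinates around $\gamma$, replacing the transverse warping functions $\sinh(r)$ and $\cosh(r)$ by the functions $\sigma, \tau$ from Proposition \ref{interpolating-functions}. More precisely, I would identify the universal cover $\mathbb{H}^{2n+1}$ with $\mathbb{R}^{2n} \times \mathbb{R}$ via Fermi coordinates $(r, \omega, z)$ about the axis $\tilde\gamma$, so that the hyperbolic metric reads
\[
ds^2_{\mathbb{H}} = dr^2 + \sinh^2(r)\, d\Omega^2_{2n-1} + \cosh^2(r)\, dz^2,
\]
and define the candidate metric by
\[
g := dr^2 + \sigma^2(r)\, d\Omega^2_{2n-1} + \tau^2(r)\, dz^2.
\]
Smoothness across $\tilde\gamma$ is automatic, since $\sigma(r) = r$ and $\tau(r) = 1$ for $r \leq 1/R$, so $g$ agrees with the Euclidean product metric near the $z$-axis. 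The deck transformation $(r, \omega, z) \mapsto (r, \rho(\omega), z+\ell)$ acts on the $\mathbb{R}^{2n}$-factor by a rotation (preserving $d\Omega^2_{2n-1}$) and by translation on the $\mathbb{R}$-factor, so it preserves $g$, and $g$ descends to $C^{2n+1}_{\ell,\rho}$.

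Properties (i), (ii), and (iv) then follow from the construction: on $\{r \geq R\}$ one has $\sigma = \sinh$ and $\tau = \cosh$, giving the hyperbolic metric; on $\{r \leq 1/R\}$ one has $g$ equal to the Euclidean metric and hence flat; and in that flat neighborhood the deck transformation is the product of the rotation $\rho$ on the normal $\mathbb{R}^{2n}$ with the length-$\ell$ translation, yielding holonomy $\rho$ around $\gamma$.

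For (iii), I plan to reduce to the low-dimensional curvature computations already established in the paper. When $n = 1$, $g$ is precisely the three-dimensional warped product $dr^2 + \sigma^2(r)\, d\theta^2 + \tau^2(r)\, dz^2$ of Corollary \ref{3d-NPC}, so nonpositive curvature holds. When $n \geq 2$, fix any point $p = (r\omega, z)$ of the universal cover with $r > 0$ (points with $r = 0$ lie in the flat region) and any tangent 2-plane $H \subset T_pM$. Let $\pi \colon T_pM \to T_{r\omega}\mathbb{R}^{2n}$ be projection along $\partial_z$; then $\pi(H) \cup \{\omega\}$ spans a subspace of dimension at most $3$, and, since $2n \geq 4$, it can be enlarged to a $3$-dimensional linear subspace $V \subset \mathbb{R}^{2n}$. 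The orthogonal reflection of $\mathbb{R}^{2n}$ across $V$ lies in $O(2n)$; extended by the identity on the $z$-factor it gives an isometry of $(\mathbb{R}^{2n} \times \mathbb{R}, g)$ whose fixed set is the totally geodesic submanifold $V \times \mathbb{R}$. By construction $H \subset T_p(V \times \mathbb{R})$, and in the natural spherical coordinates on $V \cong \mathbb{R}^3$ the induced metric on $V \times \mathbb{R}$ is exactly the doubly warped product metric $h$ of Proposition \ref{4d-NPC}. Consequently $K(H) \leq 0$.

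The main obstacle is really step (iii), and the key idea is the totally geodesic reduction just described: once one recognizes that every 2-plane at a point with $r>0$ sits inside a reflection-fixed 4-dimensional submanifold whose induced metric is precisely the previously analyzed $h$, nonpositivity of $K(H)$ follows immediately from Proposition \ref{4d-NPC}. The mild linear-algebraic caveat is that this reduction needs $2n \geq 3$, which is why $n = 1$ must be handled separately via Corollary \ref{3d-NPC}; beyond that, no further curvature computation is required.
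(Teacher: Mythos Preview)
Your proposal is correct and follows essentially the same approach as the paper: define the doubly warped metric $g=dr^2+\sigma^2(r)\,d\Omega^2_{2n-1}+\tau^2(r)\,dz^2$, read off properties (1), (2), (4) from the behavior of $\sigma,\tau$ and the $(\rho,\ell)$-invariance, handle $n=1$ via Corollary~\ref{3d-NPC}, and for $n\geq 2$ reduce the curvature check to a totally geodesic copy of $(\mathbb{R}^4,h)$ and invoke Proposition~\ref{4d-NPC}. The only cosmetic difference is that the paper first uses an $\mathrm{SO}(2n)$-rotation to move $H$ into a fixed coordinate $\mathbb{R}^4$ (realized as the fixed set of an $\mathrm{SO}(2n-3)$-action), whereas you locate a $3$-plane $V\subset\mathbb{R}^{2n}$ containing $\pi(H)\cup\{\omega\}$ and use the reflection across $V$; both yield the same totally geodesic $4$-dimensional slice with induced metric $h$.
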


\begin{proof}
Consider $\R^{2n+1}$ with generalized cylindrical coordinates and equipped with the metric 
	\begin{equation}\label{eqn:hyp-metric}
		dr^2 + \sinh^2(r) d\theta_{S^{2n-1}} ^2  + \cosh^2(r) dz ^2
	\end{equation}
where $d\theta_{S^{2n-1}} ^2$ denotes the standard (round) metric on the unit sphere $S^{2n-1}\subset \mathbb R^{2n}$. This space is isometric to $\mathbb H^{2n+1}$, and without loss of generality, we may assume that the isometry identifies $\tilde \gamma$ with the $z$-axis. Notice that this hyperbolic metric is symmetric around the $z$-axis, and hence there is an action by $\mathrm{SO}(2n)\times \mathbb R$, where the $\mathrm{SO}(2n)$ factor acts by rotations around the $z$-axis, whereas the $\mathbb R$ factor acts by translations in the $z$-direction. As a result, we can isometrically identify $C^{2n+1}_{\ell, \rho}$ with the quotient of $\mathbb R^{2n+1}$ by the element $(\rho, \ell)\in \mathrm{SO}(2n)\times \R$.

Now consider a new, rotationally symmetric metric on $\mathbb R^{2n+1}$, given by
	\begin{equation}\label{warping}
		h:=dr^2 + \sigma ^2(r) d\theta_{S^{2n-1}} ^2 + \tau ^2(r) dz ^2,
	\end{equation}
where $\sigma$ and $\tau$ are the functions constructed in Proposition \ref{interpolating-functions}. Note that by construction this metric still retains an isometric action of $\mathrm{SO}(2n)\times \mathbb R$, that is, the exact same action (setwise) is still an isometry for the $h$-metric. Thus we can quotient out by the same element $(\rho, \ell)\in \mathrm{SO}(2n)\times \mathbb R$ to get a new Riemannian metric on $C^{2n+1}_{\ell, \rho}$. 

To conclude, we are left with verifying that the metric we constructed satisfies properties (1)--(4). Recall that the functions $\sigma$ and $\tau$ constructed in Proposition \ref{interpolating-functions} satisfy
	$$\sigma (r) =
		\begin{cases}
			\sinh(r), & r\geq R \\
			r, & r\leq 1/R
			\end{cases} \hskip 0.5in 
	\tau (r) =
		\begin{cases}
			\cosh(r), & r\geq R \\
			1, & r\leq 1/R,
		\end{cases} $$
and thus that the metric $h$ satisfies
	$$h=
		\begin{cases}
			dr^2 + \sinh^2(r) d\theta_{S^{2n-1}} ^2  + \cosh^2(r) dz ^2, & r\geq R \\
			dr^2 + r^2 d\theta_{S^{2n-1}} ^2  + dz ^2, & r\leq 1/R.
		\end{cases}$$
Since these are the hyperbolic and Euclidean metric, respectively (in cylindrical coordinates), this immediately tells us that $h$ has properties (1) and (2).

We now verify property (3), that is, that $h$ has nonpositive sectional curvature. In the case where $n=1$ (i.e., on $\R^3$), the verification that $h$ has nonpositive sectional curvature was carried out in Corollary \ref{3d-NPC}, so we will henceforth assume that $n\geq2$. Now let $p\in \R^{2n+1}$ be an arbitrary point. We will check that the sectional curvature is nonpositive along all tangent $2$-planes at $p$. Since the metric is rotationally symmetric around the $z$-axis, we may as well assume $p=(r_0, 0, \ldots , 0, z_0)$ for some $r_0\geq 0$ and $z_0 \in \R$. Note that it is sufficient to establish nonpositive sectional curvature at all points where $r_0 >0$, that is, points that are not on the $z$-axis. Indeed, the curvature function is continuous, and we can approximate any tangent $2$-plane on the $z$-axis by a sequence of tangent $2$-planes off the $z$-axis. So we will henceforth assume that $r_0>0$. 

Let $H$ be an arbitrary tangent $2$-plane at the point $p$. The tangent space at $p$ decomposes as a direct sum of the $(2n-1)$-dimensional tangent space to the sphere through $p$, along with a pair of $1$-dimensional spaces spanned by $\frac{\partial}{\partial z}$ and $\frac{\partial}{\partial r}$, respectively. Observe that at the point $p$, the vectors $\{\frac{\partial}{\partial x_2}, \ldots ,\frac{\partial}{\partial x_{2n}}\}$ are vectors tangent to the $(2n-1)$-sphere $S^{2n-1}$ described by the equations $r=r_0, z=z_0$. Consider the projection of $H$ onto this subspace. It has dimension at most $2$. Since the rotations $\mathrm{SO}(2n)$ around the $z$-axis act transitively on orthonormal pairs of tangent vectors to $S^{2n-1}$, we can use an isometry to move $H$ (while fixing $p$) so that its projection now lies in the span of $\{\frac{\partial}{\partial x_2}, \frac{\partial}{\partial x_3}\}.$ Thus the sectional curvature along {\it any} tangent $2$-plane $H$ at $p$ coincides with the sectional curvature along a $2$-plane lying in the span of the four vectors $\{\frac{\partial}{\partial r}, \frac{\partial}{\partial x_2}, \frac{\partial}{\partial x_3}, \frac{\partial}{\partial z}\}$.

Now consider the subspace spanned by the coordinates $x_1, x_2, x_3, z$. This is a copy of $\R^4$ inside $\R^{2n+1}$, and 
by the discussion above, every tangent $2$-plane to $\R^{2n+1}$ can be moved by an isometry to lie inside this $\R^4$. Since 
the $\R^4$ is the fixed subset of an isometric $\mathrm{SO}(2n-3)$-action (recall $n\geq 2$), it is a totally geodesic subset with 
respect to the $h$-metric. So we are just left with computing the sectional curvature for $\R^4$ 
with the restricted metric.  But in Proposition \ref{4d-NPC}, we showed this metric on $\R^4$ has nonpositive sectional 
curvature, completing the verification of property (3).

Lastly, we need to check property (4). The space $C^{2n+1}_{l,\rho}$ is the quotient of $\mathbb R^{2n+1}$ by the isometry $(\rho, l) \in \mathrm{SO}(2n)\times \mathbb R$ (for the hyperbolic metric). Since the construction of the metric $h$ is invariant under the action of $\mathrm{SO}(2n)\times \mathbb R$, the metric $h$ descends to a metric on $C^{2n+1}_{l,\rho}$. Finally, parallel transport along the $z$-axis is given by (the differential of) the vertical translation. Since we are taking the quotient by $(\rho, l)$, it immediately follows that the holonomy along $\gamma$ is still given by the same matrix $\rho$. This verifies property (4) and completes the proof of the Proposition.
\end{proof}

%

\subsection{Completing the proof.}\label{subsec:thmB-proof}

We now have all the necessary ingredients.

\begin{proof}[Proof of Theorem \ref{thmA}]
Let $M$ be an arbitrary finite-volume odd-dimensional hyperbolic $(2n+1)$-manifold. Then by Proposition \ref{P:HolonomyProp}(a) we can find a closed geodesic $\eta$ in $M$ whose holonomy $\rho \in \mathrm{SO}(2n)$ is purely irrational. The geodesic $\eta$ might have self-intersections and might have small normal injectivity radius, but by passing to a finite cover $\bar M \rightarrow M$, we can ensure that there is an embedded lift $\bar \eta \hookrightarrow \bar M$
whose normal injectivity radius is $R>{37}$. By Lemma \ref{standard-nbhd}, the geodesic $\bar \eta$ has an $R$-neighborhood $N$ isometric
to an $R$-neighborhood of $\gamma \hookrightarrow C^{2n+1}_{l, \rho^k}$ for some integer $k\geq 1$. Of course, if $\rho$ is purely irrational, then so is $\rho^k$. Applying Proposition \ref{flattening-out}, we can flatten out the metric inside this neighborhood $N$. By construction, the metric satisfies properties (1) and (2) in the statement of our theorem.

To verify property (3), let us assume by way of contradiction that $\bar M$, equipped with the flattened out metric near $\gamma$, contains uncountably many closed geodesics. Since $\pi_1(\bar M)$ is countable, it follows that there exists a free homotopy class of loops which contains uncountably many geometrically distinct closed geodesics. Let $\eta_1, \eta_2$ be two such closed geodesics. From the flat strip theorem (see \cite[Prop. 5.1]{eberlein-oneill}), it follows that there exists an isometrically embedded flat cylinder that cobounds $\eta_1, \eta_2$ and hence that at each point along $\eta_i$, we have a tangent $2$-plane where the sectional curvature is zero. Now for our perturbed metric on $\bar M$, the sectional curvature is strictly negative {\it except} in the neighborhood $N$ of $\bar \eta$. Hence we see that the $\eta_i$ are entirely contained in $N$.

From the construction of the metric in $N$, the metric on $\tilde N$ is isometric to a neighborhood of the $z$-axis in the $h$ metric on $\R^{2n+1}$ (from the construction in Proposition \ref{flattening-out}). Under this identification, the lift of $\bar \eta$ is identified with the $z$-axis, whereas the lift $\tilde \eta_i$ of each $\eta_i$ is a geodesic whose $z$-coordinate is unbounded (in both directions). Since the lift of $\eta_i$ is at bounded distance from the $z$-axis, and since the $h$-metric on $\R^{2n+1}$ is nonpositively curved, $\tilde \eta_i$ cobounds a flat strip $S$ with the $z$-axis. We now claim $S$ has width zero, that is, that $\tilde \eta_i$ coincides with the $z$-axis. To see this, we note that $\pi_1(N) \cong \mathbb Z$ is generated by the loop $\bar \eta$. Under the identification with $\mathbb R^{2n+1}$, this element is represented by the element $g:=(\rho^k, \ell)\in \mathrm{SO}(2n)\times \R$. Since some power $g^s$ leaves $\tilde \eta_i$ invariant and acts by translation on the $z$-axis, it must leave the flat strip $S$ invariant. If $S$ has positive width, there exists a non-zero tangent vector $\vec v\in T_pS$ based at a point $p$ on the $z$-axis, which is orthogonal to the $z$-axis. Under the action of $g^s$, $\vec v$ is translated up along the flat strip $S$ to a tangent vector that is still orthogonal to the $z$-axis. Since $g^s = (\rho^{ks}, s\ell)$, this implies that $\vec v$ is an eigenvector for the matrix $\rho^{ks}$. But this contradicts the fact that $\rho^{ks}$ is purely irrational. We conclude no such vector exists and hence that $S$ must have width zero. This now forces each of the $\tilde \eta_i$ to coincide with the $z$-axis, and since they are freely homotopic, they have the same period, so must coincide geometrically, giving us the desired contradiction. This verifies property (3) and completes the proof of Theorem \ref{thmA}.
\end{proof}


\section{Closing theorem for fat flats}\label{sec:closing}

Before proving Theorem \ref{thmB} in full generality, we prove the $k=1$ case of the statement. This special case is easier to explain and contains all the key ideas for the general case. The $k=1$ case of Theorem \ref{thmB} can be rephrased as the following:

\begin{thm}\label{thm:closing}
If some geodesic $\gamma$ in $M$ {is a fat 1-flat}, then there exists a {\em closed} {fat} 1-flat in $M$. 
\end{thm}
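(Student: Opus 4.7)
The plan is proof by contradiction, combining a maximality argument with a closing-lemma style compactness step. Assume $M$ contains no closed fat $1$-flat, and set $\o$ to be the supremum of widths of fat $1$-flats in $\tilde M$. Since $M$ is compact, $\o<\infty$, and a standard compactness argument --- translating near-maximizers by $\pi_1(M)$ to a fixed fundamental domain, extracting a limit in $S\tilde M$, and using that ``having a flat tube of width $\geq w$'' is closed under limits by continuity of sectional curvature --- produces an actual fat $1$-flat $\tilde F$ whose flat tubular neighborhood $U\cong\R\times\bar B_\o$ realizes the supremum.

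Next I would produce many nontrivial translates of $\tilde F$ that converge back to it. The set of unit vectors in $SM$ tangent to fat $1$-flats of width $\o$ is closed and geodesic-flow invariant; by Birkhoff's recurrence theorem it contains a minimal invariant subset whose points are recurrent. After replacing $\tilde F$ by an appropriate lift of such a recurrent vector, there exist $t_n\to\infty$ and $g_n\in\pi_1(M)$ with $g_n\dot{\tilde F}(t_n)\to\dot{\tilde F}(0)$ in $S\tilde M$. Each translate $g_n\tilde F$ is itself a fat $1$-flat of width $\o$ and, by convexity of distance functions in nonpositive curvature, lies inside $U$ over an interval whose length grows with $n$. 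If some $g_n$ preserves $\tilde F$ as an oriented geodesic, then $g_n$ acts by a positive translation along $\tilde F$, and the quotient is a closed fat $1$-flat in $M$, contradicting the assumption. Otherwise each $g_n\tilde F$ is genuinely distinct from $\tilde F$, and inside $U$ it appears as a nearly parallel Euclidean straight line offset from $\tilde F$ by a small positive distance.

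In this non-preserving case, the union $U\cup N_\o(g_n\tilde F)$ is a connected flat region in $\tilde M$ whose transverse extent, near the point of closest approach, strictly exceeds $\o$. This produces a \emph{framed flat box}: a subset of $\tilde M$ isometric to $[-a_n,a_n]\times\bar B_{w_n}$ with $w_n>\o$, equipped with an orthonormal frame at the center aligned with the product splitting. I would then translate each framed flat box by an element of $\pi_1(M)$ so that its center frame lies in a fixed compact region of the orthonormal frame bundle of $\tilde M$, and pass to a subsequence converging in this (compact) frame bundle. The limit is a flat region in $\tilde M$ containing a complete geodesic whose flat tubular neighborhood has width strictly greater than $\o$ --- a fat $1$-flat violating the maximality of $\o$.

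The hard step is exactly this final limiting step. For each fixed $n$ the widening is only local: the angle between $g_n\tilde F$ and $\tilde F$ and their offset both tend to zero, so $w_n-\o\downarrow 0$ while the box length $a_n\to\infty$ must be chosen at the right rate to make the framed boxes converge to a globally defined wider flat tube rather than collapse back to $\tilde F$ itself. Balancing these competing scales, using convexity of distance in nonpositive curvature to propagate the widening along the box, and then exploiting compactness of the frame bundle of $M$ to extract a genuine limit, is the main technical content of the proof. The same framed flat box formalism is expected to generalize directly from $k=1$ to arbitrary $k$ in the proof of Theorem~\ref{thmB}, with ``width'' replaced by the appropriate higher-dimensional transverse data.
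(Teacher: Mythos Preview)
Your overall architecture --- choose a maximal fat flat, find deck translates approaching it, and derive a contradiction by producing a strictly larger one --- matches the paper's. But the quantity you maximize is the wrong one, and this breaks the argument as soon as $n\geq 3$. You take ``width'' to be the radius $w$ of the transverse ball $B_w$. When two width-$\o$ tubes overlap with a small transverse offset $v$, the cross-section of their union is (up to the small tilt) $B_\o(0)\cup B_\o(v)\subset\R^{n-1}$, and for $n-1\geq 2$ the largest ball contained in this set still has radius exactly $\o$: any candidate center is at distance $\leq \o$ from the boundary in the directions orthogonal to $v$. So your claimed inequality $w_n>\o$ is simply false; the difficulty you flag as ``$w_n-\o\downarrow 0$'' is in fact worse than you state --- there is no gain at all to take a limit of. (For surfaces the cross-section is one-dimensional, width equals length, and the Cao--Xavier argument does go through with width; this is precisely the dimensional issue the paper has to overcome.)

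The paper repairs this by allowing the cross-section $Y$ to be an arbitrary compact convex body and maximizing its $(n-1)$-\emph{volume} rather than its inradius (Lemma~\ref{lem:maximal}). The argument then splits explicitly on whether the nearby translate $\gamma\cdot F^*$ has its $\R^k$-direction parallel to that of $F^*$. If parallel but with nonzero transverse offset, the union $F^*\cup\gamma\cdot F^*$ is already a genuine fat flat with cross-section $Y^*\cup A(Y^*)$ of strictly larger volume (Lemma~\ref{lem:translation}): immediate contradiction, no limit needed. If not parallel, the key Lemma~\ref{lem:thicken} locates the slice where the inner half-tube of the tilted copy is exactly half inside and half outside $F^*$, and shows the cross-sectional volume gain there is at least a constant $C>0$ depending only on $\delta$ and $n-k$, \emph{uniformly in the angle}. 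This uniform $C$ is exactly what survives the limit as the angle tends to zero, and it resolves the obstacle you correctly identified as ``the hard step'' --- but your proposal does not supply this idea, and your ``balancing competing scales'' remark cannot substitute for it, since with inradius there is nothing to balance.
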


After establishing preliminary lemmas in Section \ref{subsec:lemmas}, we prove Theorem \ref{thm:closing} in Section \ref{subsec:geodesic-case}. Theorem \ref{thm:closing} is easily seen to be true if $M$ is flat, and we will use this fact in our proof. We explain how to extend the argument to a proof of Theorem \ref{thmB} in Section \ref{subsec:further-comments}. 

This result can be interpreted as a closing lemma for fat flat neighborhoods (compare with the Anosov closing lemma, \cite[Cor. 18.1.8]{KH}). The argument is a version of that presented by Cao and Xavier in \cite{cao-xavier} for surfaces. The basic idea is the same, but a few complications arise in dimensions greater than two.

%
\subsection{Preliminaries}\label{subsec:lemmas}

Let $FM$ (resp., $F\tilde M$) denote the $n$-frame bundle over $M^n$ (resp., $\tilde M^n$). Frames are ordered orthonormal sets of $n$ vectors, and for a frame $\sigma$, $\sigma(i)$ denotes the $i^{th}$ member of $\sigma$.  Let $d_{FM}(\cdot,\cdot)$ denote the distance on $FM$ induced by the Riemannian metric. 

Let $Gr_kM$ (resp., $Gr_k\tilde M$) denote the $k$-Grassmann bundle over $M$ (resp., $\tilde M$), that is, the bundle whose fiber over $p$ is the Grassmannian of $k$-planes in the tangent space at $p$. Denote by $d_{Gr_kM}(-,-)$ or $d_{Gr_k\tilde M}(-,-)$ a metric on this space.

\begin{defn}\label{def:ff}
A \emph{framed {fat} flat} is a pair $(F,\sigma)$ where $F\subset \tilde M$ and $\sigma\in F\tilde M$ satisfy:
\begin{itemize}
	\item $F$ splits isometrically as $\mathbb{R}^k \times Y$ where $k\geq 1$, $Y\subset \mathbb{R}^{n-k}$ is compact, connected, and has non-empty interior.
	\item $\{\sigma(1),\ldots , \sigma(k)\}$ span the $\mathbb{R}^k$ factor in the splitting of $F$.
\end{itemize}
We will assume $Y$ is identified with a subset of $\mathbb{R}^{n-k}$ in such a way that it contains a neighborhood of 0 and that the basepoint of $\sigma$ has coordinate $0$ in the $Y$-factor. We refer to $Y$ as the \emph{cross-section} of $F$.
\end{defn}

Note that the sectional curvatures are identically zero on $F$. We use the terminology \emph{{fat flat}} to distinguish these from the usual notion of flats, which may be of dimension strictly smaller than $n$, and which play an important role for higher rank manifolds (see, e.g., \cite{bbe, bbs, ballmann-rank, burns-spatzier, Eberlein}).

We remark that our definition requires $k\leq n-1$, since $Y$ must have non-empty interior. We have also assumed $Y$ is compact. This is necessary for choosing a `maximal' framed {fat} flat (see Lemma \ref{lem:maximal}) and for a limiting argument (see Theorem \ref{thm:selection}).

\begin{defn}
For a framed {fat} flat $(F,\sigma)$, with $p$ the basepoint of $\sigma$, we denote the subspace of $T_p\tilde M$ spanned by the first $k$ vectors of $\sigma$ by $V^\infty(\sigma) \in Gr_k\tilde M$.
\end{defn}

%

We need the following Lemmas and other tools for the proof.

\begin{defn}
Fix $R>0$ and $\delta>0$ and let $C^l_{R,\delta}$ denote the set of compact, connected subsets of $\overline B_R(0)\subset \mathbb{R}^l$ which contain $B_\delta(0)$. We endow $C^l_{R,\delta}$ with the Hausdorff distance
\[ d_H(X,Y) = \inf\{ \epsilon: X\subseteq N_\epsilon(Y) \mbox{ and } Y \subseteq N_\epsilon(X)\}. \]
\end{defn}

With this distance, $C^l_{R,\delta}$ is a metric space.

\begin{thm}[Blaschke Selection Theorem; see, e.g., \cite{price}]\label{thm:selection}
$C^l_{R,\delta}$ is compact.
\end{thm}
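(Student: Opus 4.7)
The plan is to present $C^l_{R,\delta}$ as a closed subspace of the hyperspace $\mathcal K$ of non-empty compact subsets of $\overline B_R(0)$, and invoke the classical fact that $(\mathcal K, d_H)$ is compact whenever the ambient space is a compact metric space. A closed subset of a compact space is compact, so this reduces the theorem to two separate claims: (i) compactness of $\mathcal K$, and (ii) closedness of the defining conditions ``connected'' and ``contains $B_\delta(0)$'' under Hausdorff limits.

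For (i), I would verify that $(\mathcal K, d_H)$ is complete and totally bounded. Total boundedness is straightforward: if $P\subset \overline B_R(0)$ is a finite $\varepsilon$-net, then the finite collection of non-empty subsets of $P$ forms an $\varepsilon$-net in $\mathcal K$, since any compact set $X\subset \overline B_R(0)$ is within Hausdorff distance $\varepsilon$ of $\{p\in P : d(p,X)\le \varepsilon\}$. Completeness is the standard argument: given a Cauchy sequence $X_n$, define $X$ as the set of accumulation points of sequences $x_n\in X_n$; this $X$ is non-empty, closed, bounded, and one checks $X_n\to X$ in $d_H$. Both of these are textbook computations and I would cite them rather than redo them.

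For (ii), suppose $X_n\in C^l_{R,\delta}$ and $X_n\to X$ in $d_H$. Since each $X_n\subset \overline B_R(0)$ and $\overline B_R(0)$ is closed, the Hausdorff limit $X$ also lies in $\overline B_R(0)$. For each $y\in B_\delta(0)$, the inclusion $y\in X_n$ gives $d(y,X)\le d_H(X_n,X)\to 0$, so $y\in X$ because $X$ is closed; hence $B_\delta(0)\subset X$. For connectedness, suppose for contradiction that $X=A\sqcup B$ with $A,B$ non-empty compact and $\eta := d(A,B)/3>0$. The open sets $N_\eta(A)$ and $N_\eta(B)$ are disjoint and cover $N_\eta(X)$. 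For $n$ large, $d_H(X_n,X)<\eta$ forces $X_n\subset N_\eta(A)\cup N_\eta(B)$, so connectedness of $X_n$ places it entirely in one of the two, say $N_\eta(A)$; but then any point of $B$ satisfies $d(b,X_n)\ge d(B,N_\eta(A))\ge 2\eta$, contradicting $d_H(X_n,X)<\eta$. Thus $X$ is connected and $X\in C^l_{R,\delta}$.

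There is no real obstacle here; the only place requiring care is the connectedness step, where one must use that $d(A,B)>0$ to produce disjoint open neighborhoods of the two pieces, and then use the non-emptiness of $B$ (guaranteed since the alternative violates the Hausdorff convergence) to derive the contradiction. The rest is bookkeeping. I would therefore write the proof as a short citation of the Hausdorff-hyperspace compactness theorem followed by the two-paragraph closedness verification above.
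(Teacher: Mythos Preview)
Your proposal is correct and follows essentially the same approach as the paper: cite the classical Blaschke Selection Theorem for the hyperspace of compact subsets of $\overline B_R(0)$, then observe that the additional conditions (connectedness and containing $B_\delta(0)$) are closed under Hausdorff limits. The paper's proof simply asserts these closedness facts as ``clear,'' whereas you actually write out the verifications, but the structure is identical.
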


\begin{proof}
Without the connectedness and $\delta$-ball restrictions, this is a direct application of the Selection Theorem since $\overline B_R(0)$ is compact. It is clear that we can add these restrictions as the Hausdorff-distance limit of compact, connected sets will be connected, and the Hausdorff-distance limit of sets containing $B_\delta(0)$ will contain $B_\delta(0)$.
\end{proof}

We also recall the following fact about nonpositive curvature. Recall that a \emph{flat strip} is a subset isometric to $\mathbb{R}\times [0,a]$ for some $a>0$.

\begin{thm}\cite[Prop. 5.1]{eberlein-oneill}
Let $\tilde\gamma_1(t)$ and $\tilde\gamma_2(t)$ be (set-wise) distinct geodesics in a simply-connected, nonpositively curved manifold $\tilde M$ such that $d(\tilde\gamma_1(t),\tilde\gamma_2(t))$ is bounded. Then there is a flat strip bounded by $\tilde\gamma_1$ and $\tilde\gamma_2$ in $\tilde M$.
\end{thm}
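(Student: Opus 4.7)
The plan is to exploit the convexity of the distance function in nonpositive curvature, use boundedness to force the function $f(t) := d(\tilde\gamma_1(t), \tilde\gamma_2(t))$ to be constant, and then assemble the flat strip from the resulting one-parameter family of minimizing geodesics connecting corresponding points of $\tilde\gamma_1$ and $\tilde\gamma_2$.

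First I would verify that $f$ is convex. For any two unit-speed geodesics $c_1, c_2$ in a Hadamard manifold $\tilde M$, the function $t \mapsto d(c_1(t), c_2(t))$ is convex; this follows from nonpositive curvature, either via Rauch comparison applied to the canonical geodesic variation between $c_1$ and $c_2$, or via the CAT($0$) inequality applied to midpoints of comparison triangles. A convex function on $\mathbb{R}$ that is bounded must be constant: otherwise, if $f(a) < f(b)$ for some $a < b$, convexity would force $f(c) \geq f(b) + \frac{c - b}{b - a}(f(b) - f(a)) \to \infty$ as $c \to \infty$. Hence $f(t) \equiv a_0$ for some $a_0 > 0$, where positivity is forced by the set-wise distinctness of $\tilde\gamma_1$ and $\tilde\gamma_2$.

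Next, with constant distance $a_0$ in hand, I would for each $t$ take the unique unit-speed minimizing geodesic $\sigma_t : [0, a_0] \to \tilde M$ from $\tilde\gamma_1(t)$ to $\tilde\gamma_2(t)$ (existence and uniqueness follow from $\tilde M$ being Hadamard), and form the smooth variation $V(s, t) := \sigma_t(s)$. For each fixed $t_0$, the variational field $J(s) := \partial_t V(s, t_0)$ along $\sigma_{t_0}$ is a Jacobi field with unit-norm boundary values $J(0) = \tilde\gamma_1'(t_0)$ and $J(a_0) = \tilde\gamma_2'(t_0)$, and in nonpositive curvature $\|J\|^2$ is convex in $s$, so $\|J\| \leq 1$ throughout $[0, a_0]$.

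The main obstacle is upgrading this picture to an honest flat strip, i.e.\ showing that $V$ is an isometric immersion of the Euclidean rectangle $[0, a_0] \times \mathbb{R}$. The key additional input is that every $\sigma_t$ realizes the common minimal distance $a_0$ between the two geodesics. After first reparametrizing $\tilde\gamma_2$ by an additive constant chosen to minimize the convex function $c \mapsto d(\tilde\gamma_1(0), \tilde\gamma_2(c))$, the minimizing property together with the first variation of arc length forces each $\sigma_t$ to meet both $\tilde\gamma_1$ and $\tilde\gamma_2$ orthogonally. Then the second variation formula, combined with $\operatorname{sec} \leq 0$ and the fact that $L(\sigma_t) \equiv a_0$ cannot be shortened by the variation, forces the sectional curvatures of planes spanned by $\sigma_{t_0}'$ and $J$ to vanish and the Jacobi field $J$ to be parallel of constant unit norm. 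This is equivalent to $V^* g = ds^2 + dt^2$, so $V$ is the desired isometric embedding of a flat strip cobounded by $\tilde\gamma_1$ and $\tilde\gamma_2$.
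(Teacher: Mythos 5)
The paper does not actually prove this statement --- it is the classical Flat Strip Theorem, quoted from Eberlein--O'Neill --- so your argument can only be compared with the standard proof, which is essentially the route you take, and its core is correct: convexity of $t\mapsto d(\tilde\gamma_1(t),\tilde\gamma_2(t))$ plus boundedness forces constancy; the connecting geodesics $\sigma_t$ depend smoothly on $t$ (no conjugate points in a Hadamard manifold), so $J=\partial_t V$ is a Jacobi field; and since $L(\sigma_t)$ is constant and the endpoint curves are geodesics (so the boundary terms in the second variation vanish), the second variation argument does force $\nabla_s J^{\perp}=0$ and the vanishing of the curvature of $\operatorname{span}(J,\sigma_t')$, while endpoint orthogonality makes the tangential part $\langle J,\sigma_t'\rangle$, which is linear in $s$, vanish identically.

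Two steps are stated more strongly than you have justified them. First, the claim that \emph{every} $\sigma_t$ realizes the minimal distance between the two lines: your shift only arranges this at $t=0$. To propagate it, note that $t\mapsto d(\tilde\gamma_1(t),\tilde\gamma_2(\mathbb{R}))$ is convex (distance to a convex set) and bounded, hence constant, and it agrees with the constant $d(\tilde\gamma_1(t),\tilde\gamma_2(t))$ at $t=0$; first variation then gives orthogonality at the $\tilde\gamma_2$-end, and constancy of $L(\sigma_t)$ transfers it to the $\tilde\gamma_1$-end. (Also note the constant distance changes after the shift, which is harmless.) Second, and more seriously, the final inference ``$V^{*}g=ds^2+dt^2$, so $V$ is the desired isometric embedding of a flat strip'' is not valid as stated: a flat pullback metric only makes $V$ an isometric \emph{immersion}, whereas the paper's notion of a flat strip is a \emph{subset} isometric to $\mathbb{R}\times[0,a]$, which requires the image to be totally geodesic (so that induced and ambient distances agree) and $V$ to be injective. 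Both follow from what you already have by one more computation: nonpositive curvature makes the symmetric form $X\mapsto\langle R(X,J)J,X\rangle$ semidefinite, so its vanishing at $X=\sigma_t'$ (zero sectional curvature of the plane) gives $R(\sigma_t',J)J=0$; then $\nabla_s(\nabla_t\partial_t V)=\nabla_t(\nabla_s\partial_t V)+R(\partial_s V,\partial_t V)\partial_t V=0$, so $\nabla_t\partial_t V$ is parallel along each $\sigma_t$ and vanishes at $s=0$ (where the transversal curve is $\tilde\gamma_1$), hence vanishes identically. Thus all coordinate lines are geodesics, $V$ is totally geodesic, and injectivity follows since a Hadamard manifold has no geodesic loops. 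With these two patches your proof is complete and agrees with the classical argument.
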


This theorem implies the following:

\begin{lem}\label{lem:convex}
Any framed {fat} flat in $\tilde M$ is contained in a framed {fat} flat for which the $Y$-factor is convex.
\end{lem}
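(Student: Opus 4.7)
Plan. My strategy is to take $\bar Y := \overline{\mathrm{conv}}(Y) \subset \mathbb{R}^{n-k}$, the closed convex hull of $Y$, and to extend the isometric embedding $\iota : \mathbb{R}^k \times Y \hookrightarrow \tilde M$ underlying $(F,\sigma)$ to an isometric embedding $\bar\iota : \mathbb{R}^k \times \bar Y \hookrightarrow \tilde M$. Since $\bar Y$ is compact, convex, contains a neighborhood of $0$, and the frame vectors $\sigma(1),\ldots,\sigma(k)$ continue to span the $\mathbb{R}^k$-factor in the extension, $(\bar\iota(\mathbb{R}^k \times \bar Y),\sigma)$ would then be a framed fat flat containing $(F,\sigma)$ whose cross-section is convex.

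The main building block is a chord extension: given any two points $y_1,y_2 \in Y$, I build an isometric extension of $\iota$ to $\mathbb{R}^k \times [y_1,y_2]$. For each unit vector $v \in \mathbb{R}^k$, the geodesics $\alpha_v(t) := \iota(tv,y_1)$ and $\beta_v(t) := \iota(tv,y_2)$ are set-wise distinct and lie at constant distance $|y_1 - y_2|$ in $\tilde M$, so the flat strip theorem produces a flat strip $S_v$ of width $|y_1 - y_2|$ bounded by them. Within $S_v$, the parallel geodesic at transverse distance $\lambda|y_1 - y_2|$ from $\alpha_v$ will serve as the image of $t \mapsto (tv,(1-\lambda)y_1 + \lambda y_2)$. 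The transverse segment from $\iota(0,y_1)$ to $\iota(0,y_2)$ inside $S_v$ is a geodesic in the CAT(0) space $\tilde M$, and by uniqueness of geodesics it is independent of $v$; parallel transport along this segment preserves angles, so the candidate parallel geodesics coming from the various $S_v$'s share a common basepoint and have mutually orthogonal tangent directions. Combined with the pairwise flat strips, these data should assemble into an isometric embedding of $\mathbb{R}^k \times [y_1,y_2]$ into $\tilde M$ agreeing with $\iota$ on the two boundary slices.

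Next one iterates: by Carath\'eodory, every $\bar y \in \bar Y$ is a convex combination of at most $n-k+1$ points of $Y$, so repeated chord extensions construct $\bar\iota$ on all of $\mathbb{R}^k \times \bar Y$. Coherence---that different convex decompositions of a given $\bar y$ yield the same slice in $\tilde M$---should follow from uniqueness of flat strips between asymptotic geodesics in the CAT(0) space $\tilde M$, combined with the rigidity principle that two totally geodesic flat submanifolds agreeing on an open set must coincide on their common connected component. Continuity of the metric and compactness of $\bar Y$ then transfer the isometric product structure to the closure.

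The principal technical obstacle is precisely this coherence: verifying that iterated applications of the flat strip theorem, possibly from different starting data, glue into a single isometric product on $\mathbb{R}^k \times \bar Y$, and that each extended slice $P_{\bar y}$ really is a totally geodesic $k$-dimensional flat. The latter point requires promoting the pairwise flatness provided by the two-dimensional flat strips to joint flatness of all $k$ coordinate directions at each point, which uses nonpositive curvature of $\tilde M$ in an essential way.
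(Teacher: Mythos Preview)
Your approach is essentially the paper's---apply the flat strip theorem to the parallel geodesics $\mathbb{R}v \times \{y_i\}$ for all pairs $y_1,y_2\in Y$ and all directions $v$---but the paper frames it differently and more economically. Rather than constructing an abstract embedding $\mathbb{R}^k\times\bar Y\hookrightarrow\tilde M$ and then worrying about coherence of iterated chord extensions, the paper simply takes the convex hull of $F$ \emph{inside the CAT(0) space $\tilde M$} and asserts that this hull is flat, being covered by the flat strips. This framing sidesteps your coherence bookkeeping entirely, since the convex hull in $\tilde M$ is canonically defined and there is nothing to glue; the product structure $\mathbb{R}^k\times Y'$ with $Y'$ convex then follows once one knows the hull is a flat convex set containing all the original $\mathbb{R}^k$-slices. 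On the other hand, the paper's proof is terse and does not spell out why the hull is flat (the strips are only two-dimensional) or why it inherits the product splitting---these are precisely the technical points you flag, so your plan is a legitimate elaboration of the same idea. If you want to streamline, you can drop the Carath\'eodory iteration and instead argue directly with the convex hull in $\tilde M$; the ``pairwise to joint flatness'' issue you identify is then the only substantive gap left to fill.
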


\begin{proof}
Let $(F,\sigma)$ be a framed {fat} flat. For any distinct pair of points $y_1, y_2$ in $Y$ and any unit vector $v\in \mathbb{R}^k$, the geodesics $\tilde\gamma_i = \mathbb{R}v\times \{y_i\}$ are at bounded distance in $\tilde M$, and hence bound a flat strip. As this holds for all directions $v$ and all pairs $y_1, y_2$, the convex hull of $F$ in $\tilde M$ is flat. Together with $\sigma$, it is the desired framed {fat} flat.
\end{proof}

We also need the following simple geometric fact.

\begin{lem}\label{lem:translation}
Let $Y\subset \mathbb{R}^l$ be a compact, convex subset with non-empty interior. Let $A\in Isom(\mathbb{R}^l)$. If the translational part of $A$ is non-zero, then $vol_l(Y\cup A(Y)) > vol_l(Y)$.
\end{lem}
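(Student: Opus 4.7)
The plan is to rewrite $\mathrm{vol}_l(Y\cup A(Y))$ via inclusion--exclusion and reduce the strict inequality to the statement that $A(Y) \neq Y$. Since $A$ is an isometry, $\mathrm{vol}_l(A(Y)) = \mathrm{vol}_l(Y)$, so
\[ \mathrm{vol}_l(Y \cup A(Y)) = 2\,\mathrm{vol}_l(Y) - \mathrm{vol}_l(Y \cap A(Y)), \]
and hence the desired inequality is equivalent to $\mathrm{vol}_l(Y \cap A(Y)) < \mathrm{vol}_l(Y)$. If instead $\mathrm{vol}_l(Y\setminus A(Y)) = 0$, then using that $Y$ is the closure of its interior (being a compact convex set with non-empty interior) and that $A(Y)$ is closed, every point of $Y$ is a limit of interior points of $Y$ lying in $A(Y)$, forcing $Y \subseteq A(Y)$; comparing volumes then yields $Y = A(Y)$. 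So it suffices to rule out $A(Y) = Y$.

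Next I would argue that if $A$ preserves $Y$ setwise then $A$ has a fixed point in $Y$. The standard tool is the Chebyshev center of $Y$, i.e., the center of the (unique) smallest closed ball in $\mathbb{R}^l$ containing $Y$. This center is defined purely in terms of the set $Y$ and is unique, so any isometry of $\mathbb{R}^l$ preserving $Y$ must fix it. (Alternatively, one could use the centroid $\frac{1}{\mathrm{vol}_l(Y)}\int_Y x\,dx$, which is also preserved by any affine isometry carrying $Y$ to itself.)

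Finally, I would match this against the notion of translational part. Every Euclidean isometry admits a canonical screw-motion decomposition: writing $A(x) = Rx + t$ with $R \in \mathrm{O}(l)$, the translational part is the projection of $t$ onto $\ker(R-I)$, i.e., the translation along the axis of the screw. For $R$ orthogonal, $\mathrm{im}(I-R)$ and $\ker(I-R)$ are orthogonal complements, so a fixed point $p$ of $A$ gives $t = (I-R)p \in \mathrm{im}(I-R) = \ker(I-R)^{\perp}$, and the translational part vanishes. Contrapositively, if the translational part is non-zero, $A$ has no fixed point, so it cannot preserve $Y$, completing the proof.

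The main obstacle is nothing deep; it is simply invoking cleanly the classical fact that any Euclidean isometry preserving a compact convex set has a fixed point. Everything else is inclusion--exclusion and elementary linear algebra on the Euclidean isometry group.
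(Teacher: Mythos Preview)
Your argument is correct and follows essentially the same line as the paper's: both reduce the strict inequality to ruling out $A(Y)\subseteq Y$ (equivalently $A(Y)=Y$) via the fact that a compact convex body with nonempty interior is the closure of its interior, and both then invoke the smallest enclosing ball to obstruct this. The paper phrases the endgame as ``$A(Y)$ lies in both $B$ and $B+v$, hence in a strictly smaller ball,'' while you phrase it as ``$A(Y)=Y$ forces $A$ to fix the Chebyshev center (or centroid), hence to have a fixed point''; these are the same idea.

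The one genuine addition in your write-up is the care you take with the meaning of ``translational part.'' The paper writes $A=B+v$ in what reads as the naive decomposition $A(x)=Rx+v$ with $v=A(0)$, and its step ``$A(Y)\subset B+v$'' is not justified for an arbitrary minimal ball $B$ (indeed, under the naive reading the lemma is sensitive to where the origin sits inside $Y$: a disk not centered at $0$ is preserved by a rotation about its own center, which has $A(0)\neq 0$). Your screw-motion interpretation --- with the accompanying linear-algebra observation that a fixed point $p$ gives $t=(I-R)p\in\ker(I-R)^\perp$ --- is exactly what makes the statement true without further normalization, and your Chebyshev-center/centroid argument handles it cleanly. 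So your version is a tightening of the paper's, not a different route.
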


\begin{proof}
Write $A = B+v$ where $v$ is the translational part of $A$. Since $A$ is convex with nonempty interior, it is the closure of its interior, and so if $vol_l(Y\cup A(Y)) = vol_l(Y)$, then $A(Y)$ must be contained in $Y$.  Let $B$ be a ball of minimal radius containing $Y$. Then if $vol_l(Y\cup A(Y)) = vol_l(Y)$, then $A(Y)$ must belong to both $B$ and $B+v$. But if $v\neq 0$, then $A(Y)\subset B\cap (B+v)$ belongs to a ball of radius strictly smaller than that of $B$, contradicting the minimality of our choice of $B$.
\end{proof}

The main geometric tool behind our proof will be the following lemma.

\begin{lem}\label{lem:thicken}
In a nonositively curved, simply connected manifold $M^n$, let $(F_1,\sigma_1)$ and $(F_2,\sigma_2)$ be a pair of framed {fat} flats splitting isometrically as $\mathbb{R}^k\times Y_i$. Assume that $Y_i\in C^{n-k}_{R,\delta}$ for $i=1,2$. Then for all sufficiently small $\theta>0$, there exists $R(\theta)>0$ (which depends on the geometry of $F_i$ through $\delta$) satisfying $R(\theta)\to \infty$ as $\theta\to 0$ and such that:
\begin{itemize}
	\item If $d_{FM}(\sigma_1,\sigma_2)<\theta$, and $V^\infty_1\neq V^\infty_2$ as subspaces of $M^n$, then $F_1 \cup F_2$ contains a subset $X$ which splits isometrically as $B^{\mathbb{R}^k}_{R(\theta)}(0) \times Y'$ with its two factors lying along the respective factors of $F_1$, and $vol_{n-k}(Y')\geq  vol_{n-k}(Y_1)+C$ for some positive constant $C$, which depends only on $\delta$ and $n-k$.
\end{itemize}
\end{lem}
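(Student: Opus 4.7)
The plan is to build the enlarged flat $X \subset F_1 \cup F_2$ by patching $F_1$ with a thin translate of a piece of $F_2$, using the flat strip theorem together with the convexity of $F_1$ in the CAT(0) space $\tilde M$. First I would apply Lemma~\ref{lem:convex} to assume both $Y_1, Y_2$ are convex. Next, I would choose $R(\theta)$ with $R(\theta)\to\infty$ and $R(\theta)\theta \to 0$ as $\theta\to 0$, for instance $R(\theta)=\theta^{-1/2}$. At the basepoint $p$ of $\sigma_1$, decompose $T_p\tilde M = V^\infty_1 \oplus W_1$ orthogonally; after parallel transporting $\sigma_2$ to $p$, the frame proximity hypothesis $d_{FM}(\sigma_1, \sigma_2) < \theta$ forces $V^\infty_2$ to lie within angular distance $O(\theta)$ of $V^\infty_1$, with strict inequality since $V^\infty_1 \neq V^\infty_2$.

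The core geometric step is to merge the two flats using the flat strip theorem. For each cross-section point $y' \in Y_2$ and each unit vector $v_2 \in V^\infty_2$, I would argue that the complete geodesic in $F_2$ through $y'$ in direction $v_2$ stays at distance $O(\theta R(\theta))$ from the flat $F_1$ on the interval $|t| \le R(\theta)$, and more generally remains at bounded distance from $F_1$ for all $t$ because both cross-sections $Y_1, Y_2$ are compact. By the flat strip theorem each such geodesic cobounds a flat strip with its nearest $V^\infty_1$-geodesic in $F_1$. Using the rigidity of parallel Jacobi fields along geodesics in $F_1$ and $F_2$, these strips fit together coherently into a single flat region containing $B^{\mathbb{R}^k}_{R(\theta)}(0) \times Y'$, where $Y' = Y_1 \cup \widetilde{Y_2} \subset W_1$ and $\widetilde{Y_2}$ is the image of $Y_2$ under the near-identity affine map $W_2 \to W_1$ determined by the rotation carrying $V^\infty_2$ to $V^\infty_1$ together with the basepoint displacement.

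For the volume bound, the image $\widetilde{Y_2}$ is a near-isometric copy of $Y_2$ and hence contains a ball of radius close to $\delta$; moreover, since $V^\infty_1 \neq V^\infty_2$, the affine map $W_2 \to W_1$ has a nonzero translational part when viewed inside $W_1$. Lemma~\ref{lem:translation} then yields $\mathrm{vol}_{n-k}(Y_1 \cup \widetilde{Y_2}) \geq \mathrm{vol}_{n-k}(Y_1) + C$ for a constant $C>0$ depending only on $\delta$ and $n-k$ (through the volume of a $\delta$-ball in dimension $n-k$). The main obstacle in executing this plan rigorously is the assembly step: one must verify that the union of the flat strips indexed by pairs of asymptotic geodesics in $F_1$ and $F_2$ is actually a flat subset of $\tilde M$ carrying a genuine product metric of the claimed form, rather than just a disjoint union of strips. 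This should follow from the rigidity of parallel Jacobi fields in the flat pieces combined with the convexity of $Y_1$ and $\widetilde{Y_2}$ in $W_1$, but the details require care in tracking how the slices $\widetilde{Y_2}$ sit relative to $Y_1$ as the $\mathbb{R}^k$-coordinate $u$ varies over $B^{\mathbb{R}^k}_{R(\theta)}(0)$.
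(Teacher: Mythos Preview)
Your approach has two genuine gaps, one in the construction and one in the volume bound.

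First, the flat strip step fails. You claim that each complete geodesic in $F_2$ in direction $v_2 \in V^\infty_2$ remains at bounded distance from $F_1$ for all $t$ ``because both cross-sections $Y_1, Y_2$ are compact.'' This is false precisely because $V^\infty_1 \neq V^\infty_2$: some $v_2$ has a nonzero component in $W_1 = (V^\infty_1)^\perp$, so the $W_1$-coordinate of the geodesic grows linearly in $t$, while $Y_1$ is bounded. Hence the distance to $F_1 = \mathbb{R}^k \times Y_1$ tends to infinity, and the flat strip theorem does not apply. (The entire argument actually takes place inside the flat set $F_1 \cup F_2$, so no strip theorem is needed; the paper simply works in Euclidean coordinates there.)

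Second, and more seriously, your volume bound cannot produce a $C$ independent of $\theta$. Your $Y' = Y_1 \cup \widetilde{Y_2}$ is built from the cross-section of $F_2$ \emph{at the basepoint}, where by hypothesis the displacement between $\sigma_1$ and $\sigma_2$ is $< \theta$. The affine map $W_2 \to W_1$ is therefore $\theta$-close to the identity, with translational part of size $O(\theta)$. Lemma~\ref{lem:translation} then gives only $\mathrm{vol}_{n-k}(Y') > \mathrm{vol}_{n-k}(Y_1)$, with the excess tending to $0$ as $\theta \to 0$. This is exactly the situation warned about in the remark following the lemma in the paper: when the translational part is small, you do not get a uniform $C$.

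The paper's proof extracts the uniform $C$ differently. It does not compare cross-sections at the basepoint; instead it exploits the non-parallelism by sliding along the $\mathbb{R}^k$-factor. Inside $F_2$ sits a $\delta/2$-tube $S_2 = R_2 \times B_{\delta/2}$. Since $V^\infty_1 \neq V^\infty_2$, the $W_1$-slices of $S_2$ drift transversally as one moves along $R_1$: near the basepoint the slice lies mostly inside $F_1$, but eventually it lies entirely outside. At some intermediate $w' \in R_1$ the slice meets $F_1$ in at most half its $(n-k)$-volume, so $Y' := Y_1 \cup (\text{that slice})$ has $\mathrm{vol}_{n-k}(Y') \geq \mathrm{vol}_{n-k}(Y_1) + \tfrac{1}{4}\mathrm{vol}_{n-k}(B_{\delta/2})$, giving $C = \tfrac{1}{4}\mathrm{vol}_{n-k}(B_{\delta/2})$ depending only on $\delta$ and $n-k$. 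The role of small $\theta$ is only to make $F_1$ and $F_2$ nearly parallel, so that this $Y'$ persists over a large ball $B_{R(\theta)}$ in the $R_1$-factor.
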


The idea of the proof is the following. When $d_{FM}(\sigma_1,\sigma_2)$ is very small but nonzero and $V^\infty_1\neq V^\infty_2$ as subspaces of $M^n$, $F_1$ and $F_2$ are close to being parallel, but not parallel. Thus, at some point, $F_1$ and $F_2$ diverge slightly. Some of this divergence must happen in the directions spanned by $Y_1$. Then $F_1$ and $F_2$ cover a slightly larger region $Y'$ in the directions of this factor and run nearly parallel, their union covering a large ball in the other $k$ directions.

Figure \ref{fig:overlap} illustrates the proof when $n=2$ and $k=1$.

\begin{figure}[h]
\begin{tikzpicture}

\path [fill=lightgray] (-5,0) -- (5,0) -- (5,2) -- (-5,2) -- (-5,0);
\path [fill=lightgray] (-5,-.2) -- (5,2.2) -- (5,4.2) -- (-5,1.8) -- (-5,-.2);
\path [fill=gray] (-5,.2) -- (5,2.6) -- (5,3.8) -- (-5,1.4) -- (-5,.2);

\draw (-5,0) --(5,0);
\draw (-5,2) --(5,2);
\node at (-2.5,-.3){$F_1$};

\draw[thick, ->](-4.5,1)--(-4,1);
\draw[thick, ->](-4.5,1)--(-4.5,1.5);
\node at (-4.2,1.25){$\sigma_1$};

\draw (-5,-.2) --(5,2.2);
\draw (-5,1.8) --(5,4.2);
\node at (-2.5,2.8){$F_2$};
\node at (5.3,3.1){$T_2$};

\draw[thick, ->](-4.7,.8)--(-4.2,.9);
\draw[thick, ->](-4.7,.8)--(-4.8,1.3);
\node at (-4.7,.6){$\sigma_2$};

\draw[thick, ->](-5.5,0)--(-5.5,2);
\node at (-5.7,1) {$Y_1$};

\draw[thick, ->](-1,-.5)--(1,-.5);
\node at (0,-1){$R_1\cong\mathbb{R}^k$};

\draw[ultra thick, pattern=north west lines](-.8,0.1)--(2.5,0.1)--(2.5,2.4)--(-.8,2.4)--(-.8,0.1);

\node at (1,2.7) {$X$};

\draw[thick, ->] (.9,1.2)--(1.4,1.2);
\draw[thick, ->] (.9,1.2)--(.9,1.7);
\node at (.8,1){$\sigma$};

\draw[thick, ->] (-1,0.2) -- (-1,2.2);
\node at (-1.3,1.3){$Y'$};

\end{tikzpicture}
\caption{Nearby framed flats cover a strictly thicker region $X$}\label{fig:overlap}
\end{figure}
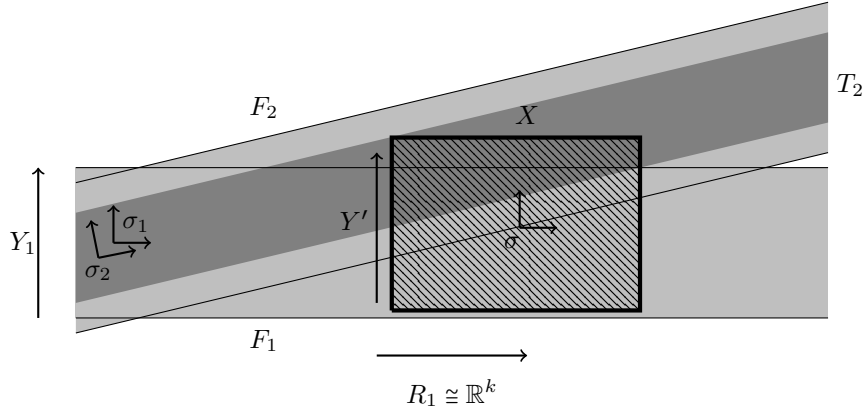

\begin{proof}
We give the proof under the assumption that $M^n = \mathbb{R}^n$ for simplicity. The entire argument deals with the interior of $F_1 \cup F_2$, where the space is flat because of the isometric splitting, so the argument extends to the case when the ambient manifold is $M^n$ unproblematically.

Fix the following notation. Write $F_i = R_i \times Y_i$. Note that each $Y_i$ factor lies along a subspace of $\mathbb{R}^n$, which we denote $W_i$. $F_2$ contains $T_2:=R_2\times B_2$ where $B_2 = B_\delta^{\mathbb{R}^{n-k}}(0)$ lies in the $Y_2$-factor. Let $S_2\subset T_2$ be $R_2\times B_2'$ where $B_2' = B_{\delta/2}^{\mathbb{R}^{n-k}}(0)$ in the $Y_2$-factor.

Fix $\theta$ smaller than $\delta$. In addition, consider the intersection of $S_2$ with the subspace $W_1$. We further choose $\theta$ so small that this intersection has $(n-k)$-volume at least half the volume of the $\delta/2$-ball. Let $C$ be $\frac{1}{4}$ the $(n-k)$-volume of the $\delta/2$ ball.

Since $\theta<\delta$ and the distance $d_{F\mathbb{R}^n}(\sigma_1,\sigma_2)$ is bounded below by the distance between the basepoints of these frames, $F_1\cap F_2\neq \emptyset$; in fact $F_1 \cap S_2 \neq \emptyset.$ Since $V^\infty_1 \neq V^\infty_2$, $(\{w\} \times B_2')\cap F_1 = \emptyset$ at some point $w\in R_2$; that is, near the basepoints of $\sigma_1$ and $\sigma_2$, $S_2$ intersects $F_1$, but at some point along the $\mathbb{R}^k$-factor of $F_2$, $S_2$ no longer intersects $F_1$. Therefore, at some coordinate $w'\in R_1$, the affine slices of $S_2$ parallel to $W_1$ intersect $F_1$ nontrivially, but in less than one-half of their $(n-k)$-volume. Hence, lying outside of $F_1$ we have a slice of $S_2\subset F_2$ in the $W_1$-direction of $(n-k)$-volume at least $\frac{1}{4}vol_{n-k}(B_{\delta/2}(0))$.

Let $Y'$ be the union of $Y_1$ with the $W_1$-slice of $S_2$ at the coordinate $w'\in R_1$. By the choice of $w'$ it has $(n-k)$-volume at least $vol_{n-k}(Y) + C$. Since a uniform neighborhood of $S_2$ is contained in $T_2\subset F_2$, for all $w''$ sufficiently close to $w'$ in $R_1$, the parallel translation of $Y'$ in the $R_1$-direction still belongs to $F_1\cup F_2$. Therefore $F_1 \cup F_2$ contains a subset splitting as $B^{\mathbb{R}^k}_R(0)\times Y'$ as desired.

Finally, we note that as $\theta \to 0$, $T_2, S_2,$ and $F_1$ become very nearly parallel in the $R_i$-directions. Therefore, the set of $w''$ for which the $Y'$ defined above still lies in $T_2 \subset F_2$ contains a larger and larger ball about $w'$ in the $R_1$-factor. A precise estimate of $R(\theta)$ could be given using some simple Euclidean geometry, but this argument is sufficient to establish that $R$ can be taken to depend only on $\theta$ and $\delta$, with $R(\theta)\to \infty$ as $\theta \to 0$. This finishes the proof.
\end{proof}

\begin{rem}
The assumption $V^\infty_1\neq V^\infty_2$ as subspaces of $M^n$ in Lemma \ref{lem:thicken} is necessary to ensure we can take $C$ independent of $\theta$. If $V^\infty_1 = V^\infty_2$ as subspaces of $M^n$ and $d_{FM}(\sigma_1,\sigma_2)$ is small and non-zero, then $F_1$ and $F_2$ are nearby and parallel in their infinite directions. Then it is fairly clear that $F_1\cup F_2$ form a framed {fat} flat with strictly larger cross-section (although in this case the increase in volume $C$ goes to 0 as $\theta$ does). We will use this idea (and prove this statement precisely) in our proof of Theorem \ref{thm:closing}. We separate the argument into two cases according to whether $V^\infty_1$ and $V^\infty_2$ are parallel because the non-parallel case involves a limiting argument in which the uniform choice of $C$ will be useful.
\end{rem}

We supply a definition for the objects built by Lemma \ref{lem:thicken}.

\begin{defn} \label{flatbox}
We call $(X,\sigma)$ a \emph{framed flat box} if $X\subset \tilde M$ splits isometrically as $B^{\mathbb{R}^k}_R(0) \times Y'$ with $Y'$ a compact, connected subset of $\mathbb{R}^{n-k}$ with nonempty interior, and $\sigma$ is a frame based at a point in $X$ whose first $k$ vectors are tangent to the $B_R(0)$ factor. We call $Y$ the \emph{cross section} of $(X,\sigma)$ and $R$ its \emph{length}.
\end{defn}

Equipping the $X$ built by Lemma \ref{lem:thicken} with a frame parallel to $\sigma_1$ and with basepoint having coordinate 0 in the $B_R(0)$-factor of $X$, we see that Lemma \ref{lem:thicken} supplies a framed flat box with cross section of strictly larger $(n-k)$-volume than that of the original {fat} flats.

%

\subsection{Proof of Theorem \ref{thm:closing}}\label{subsec:geodesic-case}

The idea of the proof is as follows. If there is no closed {fat} 1-flat in $M$, then the {fat} 1-flats must have accumulation points. When equipped with a frame $\sigma$ with $\sigma(1)=\dot\gamma(0)$, a {fat} flat becomes a framed {fat} flat. Pick a framed {fat} flat whose cross-section $Y$ has maximal $(n-k)$-volume. As this {fat} flat accumulates on itself, using Lemma \ref{lem:thicken}, we will build in $\tilde M$ framed flat boxes with strictly larger cross-sections and increasingly large lengths. Using the compactness of $M$, we find a sequence $(X_n,\sigma_n)$, of framed flat boxes in $\tilde M$ such that $\sigma_n \to \sigma^*$, the length of $X_n$ goes to infinity, and -- using the Selection Theorem (Theorem \ref{thm:selection}) -- $Y_n$ converges to $Y^*$. The result is a framed {fat} flat with strictly larger cross-sectional $(n-k)$-volume, contradicting the choice of the original framed {fat} flat. Therefore our original framed {fat} flat of maximal cross-sectional area should contain a closed geodesic, as desired.

This is the scheme of proof used by Cao and Xavier. In higher dimensions, there are two extra hurdles to overcome. First, we need to show that, unless $M$ is flat, framed {fat} flats of maximal cross-sectional volume exist. This is carried out in Lemma \ref{lem:maximal}; in dimension two, this is easily dealt with by choosing the flat strip of greatest width. Second, we need to consider the case where $V^\infty_1=V^\infty_2$ when the framed {fat} flat accumulates on itself, ruling out an application of Lemma \ref{lem:thicken} and breaking the described scheme of proof. We deal with the geometric situation that arises in this case separately in Lemma \ref{lem:submfld}. We note again that, in dimension two, the geometry of this situation is particularly simple. In addition to these two hurdles, we must also take care with our limiting procedure and use the selection theorem, an issue that does not arise in dimension two.

\begin{lem}\label{lem:maximal}
The framed {fat} flats in $\tilde M$ satisfy the following two properties:
\begin{itemize}
	\item[(a)] There exists some maximal $k\leq n$ such that there is a framed {fat} flat $(F,\sigma)$ with $F\cong \mathbb{R}^k\times Y$.
	\item[(b)] For this value of $k$, choose $\delta>0$ so that there is a framed {fat} flat $F\cong \mathbb{R}^k \times Y$ with $B^{\mathbb{R}^{n-k}}_\delta(0)\subset Y$. Unless $M$ is flat, among those framed {fat} flats with noncompact factor of this maximal dimension whose cross-sections contain $B^{\mathbb{R}^{n-k}}_\delta(0)$, there is one with $vol_{n-k}(Y)$ maximal.
\end{itemize}
\end{lem}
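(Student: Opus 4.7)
Part (a) is immediate from the definition, which requires $k\geq 1$ and $n-k\geq 1$: the set of $k$ realized by some framed fat flat in $\tilde M$ is a nonempty subset of $\{1,\dots,n-1\}$, hence has a maximum.

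For part (b), I plan to proceed in two stages. First, I will show that unless $M$ is flat, the $(n-k)$-volumes of cross-sections of framed fat flats with $Y\supset B_\delta(0)$ are uniformly bounded. By Lemma \ref{lem:convex} I may assume every $Y_n$ is convex. A standard cone estimate gives that a convex $Y\subset\mathbb R^{n-k}$ containing $B_\delta(0)$ has $(n-k)$-volume at most $C(\delta,n-k)\cdot\diam(Y)^{n-k}$. So if $vol_{n-k}(Y_n)\to\infty$ along some sequence of framed fat flats, then $\diam(Y_n)\to\infty$. Picking $q_n\in Y_n$ with $|q_n|\geq\diam(Y_n)/2$, convexity forces $Y_n$ to contain the convex hull of $B_\delta(0)\cup\{q_n\}$, and along the segment from $0$ to $q_n/2$ (of length at least $\diam(Y_n)/4$) this convex hull contains a tubular neighborhood of radius at least $\delta/2$. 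Consequently $F_n=\mathbb R^k\times Y_n$ contains a $(k+1)$-dimensional flat strip---infinite in the $\mathbb R^k$ factor and of length $\to\infty$ in the chord direction---carrying a uniform $\delta/2$-fat neighborhood. Projecting the associated frames (augmented by the chord direction) to $FM$, using compactness of $FM$ to extract a convergent subsequence after shifting basepoints to the midpoint of the strip so the limit is two-sided, and lifting back to $F\tilde M$, I obtain $\tilde\sigma_n\to\tilde\sigma^*$. Continuity of the Riemann tensor then yields in the limit a $(k+1)$-dimensional flat subspace carrying a $\delta/2$-fat neighborhood: when $k+1\leq n-1$ this is a fat $(k+1)$-flat, contradicting the maximality of $k$ from part (a); when $k=n-1$ the ``neighborhood'' degenerates and one obtains an isometric embedding of $\mathbb R^n$ in $\tilde M$, forcing $\tilde M=\mathbb R^n$ and hence $M$ flat.

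Once volumes are uniformly bounded, let $V:=\sup vol_{n-k}(Y)$ over framed fat flats with $Y\supset B_\delta(0)$, and pick $(F_n,\sigma_n)$ with $vol_{n-k}(Y_n)\to V$. The cone estimate again uniformly bounds $\diam(Y_n)$, so every $Y_n$ lies in a fixed $\overline{B_R(0)}\subset\mathbb R^{n-k}$. The Blaschke Selection Theorem (Theorem \ref{thm:selection}) extracts a Hausdorff-convergent subsequence $Y_n\to Y^*$, with $Y^*$ convex, compact, connected, and containing $B_\delta(0)$; continuity of volume under Hausdorff convergence for uniformly bounded convex sets gives $vol_{n-k}(Y^*)=V$. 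Passing to a further subsequence via compactness of $FM$, I arrange $\tilde\sigma_n\to\tilde\sigma^*$ in $F\tilde M$. Building $F^*=\mathbb R^k\times Y^*$ at the footpoint of $\tilde\sigma^*$, with the two factors determined by the first $k$ and last $n-k$ vectors of $\tilde\sigma^*$, continuity of the curvature tensor together with nonpositive curvature of $\tilde M$ ensures $F^*$ is genuinely flat and isometric to the product. Thus $(F^*,\tilde\sigma^*)$ is a framed fat flat of maximal cross-sectional volume.

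The main obstacle will be making these limiting arguments precise---specifically, certifying that the Hausdorff/frame-bundle limit of flat product regions remains an isometrically embedded flat product carrying a fat neighborhood of zero curvature. The strategy is to pass the parallel Jacobi fields coming from the $\mathbb R^k$-action on each $F_n$ through the limit, using continuity of the exponential map and the curvature tensor together with standard convexity/Hausdorff stability arguments. An additional technicality in Step 1 is the choice of basepoints: to produce a complete $\mathbb R^{k+1}$ in the limit, one must shift the frames along the growing chord so that the resulting flat strip is two-sided rather than one-sided.
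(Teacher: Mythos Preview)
Your proposal is correct and follows essentially the same strategy as the paper: show that the cross-sections $Y$ are uniformly bounded by producing, from an unbounded sequence, a fat $(k+1)$-flat (contradicting maximality of $k$ unless $k+1=n$, in which case $\tilde M$ is flat), and then extract a volume-maximizer by compactness. Two minor differences are worth noting. First, the paper carries out the contradiction in two successive limiting passes---one to obtain a fat flat whose cross-section contains a half-infinite ray, and a second (shifting basepoints along that ray) to upgrade to a genuine $\mathbb R^{k+1}$-factor---whereas you collapse these into a single pass by placing the frame at the midpoint of the long chord so the limit is automatically two-sided; this is a legitimate streamlining. Second, you are more explicit than the paper about the final step: once the $Y$'s are uniformly bounded, the paper simply asserts one can ``pick one with $vol_{n-k}(Y)$ maximal,'' while you spell out the Blaschke selection plus continuity-of-volume argument on convex bodies needed to show the supremum is attained. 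The technical point you flag---that Hausdorff/frame-bundle limits of flat product regions remain isometrically embedded flat products---is handled at the same level of rigor in both treatments.
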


\begin{proof}
\emph{(a)} This is trivial. Let $\mathcal{F}$ be the set of all framed {fat} flats in $\tilde M$. Each is isometric to $\mathbb{R}^k\times Y$ for some $k\geq 1$, so there is a maximal $k$ which appears. Choose $\delta$ as described in \emph{(b)} and let $\mathcal{F}^*$ consist of the framed {fat} flats achieving the maximal value of $k$ and with cross-sections containing $B^{\mathbb{R}^{n-k}}_\delta(0)$.

\emph{(b)} First, we claim that $Y$ is uniformly bounded over all $(F,\sigma)$ in $\mathcal{F}^*$.

Suppose, toward a contradiction, that $(F_n,\sigma_n) \in \mathcal{F}^*$ is a sequence of framed {fat} flats such that $Y_n$ contains a point $y_n$ with $|y_n|>n$ (as a point in $\mathbb{R}^{n-k}$). By Lemma \ref{lem:convex} we may assume $Y_n$ is convex. Let $v_n=y_n/|y_n|$ be the unit vector from 0 towards $y_n$. We can consider $v_n$ as a unit tangent vector in $\tilde M$ based at the basepoint of $\sigma_n$. Consider the sequence $(\pi_*\sigma_n, \pi_*v_n) \in FM \times T^1(M).$  Passing to a subsequence, it has a limit point, since $M$ is compact. Therefore, there is a sequence $(\gamma_{n_i})$ in $\Gamma$ such that $D\gamma_{n_i} (\sigma_{n_i},v_{n_i})$ accumulates to $(\sigma^*, v^*) \in F\tilde M \times T^1\tilde M$.

Consider the sequence of convex framed {fat} flats $(\gamma_{n_i} \cdot F_{n_i}, D\gamma_{n_i}\sigma_{n_i})$ in $\mathcal{F}^*$. Restrict each $Y_{n_i}$ to the convex hull in $\mathbb{R}^{n-k}$ of $B_\delta(0)$ and $y_n$. Then take the limit of $(F_{n_i},\sigma_{n_i})$, using Hausdorff convergence in the first factor. This limit exists by the following argument. By construction, $\sigma_{n_i} \to \sigma^*$, and so the $\delta$-balls around $0$ in $Y_{n_i}$ converge to $B^*$, the $\delta$-ball around 0 in the flat subspace spanned by the final $(n-k)$ vectors of $\sigma^*$. Since $D\gamma_{n_i}v_{n_i} \to v^*$ and all the $Y_{n_i}$ are convex, the limit $Y^*$ of the $Y_{n_i}$ is then equal to $\bigcup_{\lambda>0} (B^*+\lambda v^*)$.

The limit $(F^*,\sigma^*)$ is similar to a framed {fat} flat, except that $Y^*$ is non-compact, containing the half-infinite ray in the $v^*$ direction. Consider the sequence $(F^*, \sigma^*_n)$ where $\sigma^*_n = P_{nv^*}\sigma^*$ and $P_{nv^*}$ denotes parallel translation along the vector $nv^* \in Y^* \subset F^*$. Note that $(F^*, \sigma^*_n)$ contains a subset splitting isometrically as $\mathbb{R}^k \times [-n,n]\times \overline B^{\mathbb{R}^{n-k-1}}_\delta(0)$. The first factor is the $\mathbb{R}^k$ factor from $F^*$, the second factor lies along the $v^*$ direction, and the third in the remaining directions. Replacing the frames $P_{nv^*}\sigma^*$ with frames which have the same first $k$ vectors and $v^*$ as their $k+1^{st}$, these are framed {fat} flats, which we denote $(F'_n,\sigma'_n)$.

As before, project the $(F'_n,\sigma'_n)$ down to $M$, find a limit point of the $\pi_*\sigma'_n$, and then lift back to $\tilde M$. We obtain a sequence $\gamma_j$ such that $(\gamma_j \cdot F'_{n_j}, D\gamma_j \sigma'_{n_j})$ converges in $\tilde M$ (again using Hausdorff convergence in the first term). The limit extends infinitely in both directions parallel to $\lim_{j}D\gamma_jv^*_{n_j}$ and contains the $\delta$-ball around 0 in the final $n-k-1$ directions. Therefore it is a framed {fat} flat with noncompact factor of dimension $k+1$, strictly higher than our maximal dimension.

This is a contradiction to our choice of $k$ unless $k+1=n$. If $k+1=n$, $\tilde M$ is flat. For all other values of $k$, the contradiction implies that the convex framed {fat} flats in $\mathcal{F^*}$ with cross-section containing a $\delta$-ball have uniformly bounded $Y$-factors and hence uniformly bounded $vol_{n-k}(Y)$. We can then pick one with $vol_{n-k}(Y)$ maximal.
\end{proof}

We call a framed {fat} flat satisfying the requirements of Lemma \ref{lem:maximal} a \emph{maximal framed {fat} flat}. Before proving the next lemma, we remark that when two {fat} flats $F_1$ and $F_2$ have nonempty intersection, we can determine whether two subspaces $V_1\subset T_{p_1}\tilde M$ and $V_2\subset T_{p_2}\tilde M$ with basepoints in these sets are parallel by comparing them by parallel translation through $F_1 \cup F_2$.

\begin{lem}\label{lem:submfld}
Let $\tilde N$ be a complete $k$-dimensional submanifold of $\tilde M$. Suppose that there exists $\epsilon_0>0$ such that whenever $d_{Gr_k\tilde M}(T_p\tilde N, D\gamma(T_q\tilde N))<\epsilon_0$ for any $p,q\in\tilde N$, $\gamma\cdot\tilde N = \tilde N$. Then $\pi(\tilde N)$ is a complete, closed, and immersed submanifold of $M$.
\end{lem}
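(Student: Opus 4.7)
My plan is to realize $\pi(\tilde N)$ as the image of an immersion $f\colon N \to M$ from a compact Riemannian quotient of $\tilde N$, and to use the $\epsilon_0$-rigidity hypothesis to force that compactness. The natural candidate for $N$ is $\tilde N/\Gamma_{\tilde N}$, where
\[
\Gamma_{\tilde N} := \{\gamma \in \Gamma : \gamma\cdot \tilde N = \tilde N\}
\]
is the setwise stabilizer of $\tilde N$ in $\Gamma$. This subgroup inherits from $\Gamma$ a free, properly discontinuous, isometric action on $\tilde N$, so $N$ is a smooth Riemannian manifold and the restriction of $\pi$ to $\tilde N$ descends to a local isometry $f\colon N \to M$ with image exactly $\pi(\tilde N)$. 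This immediately gives the immersed submanifold structure, and the completeness of $N$ follows from the completeness of $\tilde N$.

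The substantive step is to show $N$ is compact. I would argue by contradiction: assume some sequence $[p_n]\in N$, with $p_n\in\tilde N$, has no convergent subsequence. Since $M$ and the Grassmann bundle $Gr_kM$ are both compact, I can choose $\gamma_n\in\Gamma$ so that, after passing to a subsequence, $\gamma_n p_n \to \tilde q$ in $\tilde M$ and $D\gamma_n(T_{p_n}\tilde N)\to V$ in $Gr_k\tilde M$. The key observation is that, because $\gamma_m$ acts isometrically on $Gr_k\tilde M$, for any fixed large $m$ and all sufficiently large $n$,
\[
d_{Gr_k\tilde M}\!\bigl(T_{p_m}\tilde N,\, D(\gamma_m^{-1}\gamma_n)(T_{p_n}\tilde N)\bigr)
= d_{Gr_k\tilde M}\!\bigl(D\gamma_m(T_{p_m}\tilde N),\, D\gamma_n(T_{p_n}\tilde N)\bigr) < \epsilon_0.
\]
The hypothesis, applied to the points $p_m,p_n\in\tilde N$ and the deck transformation $\gamma_m^{-1}\gamma_n$, then forces $\gamma_m^{-1}\gamma_n \in \Gamma_{\tilde N}$, i.e.\ $\gamma_n = \gamma_m\alpha_n$ with $\alpha_n\in\Gamma_{\tilde N}$ for all sufficiently large $n$.

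Consequently $\alpha_n p_n = \gamma_m^{-1}\gamma_n p_n \to \gamma_m^{-1}\tilde q$ in $\tilde M$. Since each $\alpha_n p_n$ lies in $\tilde N$, which is complete (and hence closed in $\tilde M$ in the geometric settings of interest, where $\tilde N$ will be a totally geodesic subset of a CAT(0) space), the limit $\gamma_m^{-1}\tilde q$ also lies in $\tilde N$. Therefore $[p_n]=[\alpha_n p_n]$ converges in $N$, contradicting our assumption. Thus $N$ is compact, and $\pi(\tilde N) = f(N)$ is a closed subset of $M$, completing the argument.

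The only real obstacle is that the rigidity hypothesis is phrased for pairs of points of $\tilde N$, whereas the precompactness argument naturally produces tangent planes at points of $\gamma_n\cdot\tilde N$. The conjugation trick above is exactly what is needed to transport the comparison back into $\tilde N$ so that the hypothesis applies and yields the desired element of $\Gamma_{\tilde N}$; everything else is a routine precompactness and quotient-manifold argument.
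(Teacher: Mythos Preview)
Your proof is correct and rests on the same core mechanism as the paper's: use compactness of $Gr_kM$ to make the tangent planes $D\gamma_n(T_{p_n}\tilde N)$ Cauchy, then apply the conjugation identity
\[
d_{Gr_k\tilde M}\bigl(T_{p_m}\tilde N,\, D(\gamma_m^{-1}\gamma_n)(T_{p_n}\tilde N)\bigr)
= d_{Gr_k\tilde M}\bigl(D\gamma_m(T_{p_m}\tilde N),\, D\gamma_n(T_{p_n}\tilde N)\bigr)
\]
to bring the comparison back into the form demanded by the hypothesis and force $\gamma_m^{-1}\gamma_n\in\Gamma_{\tilde N}$.

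The packaging differs slightly. The paper argues directly that $\pi(\tilde N)$ is a closed subset of $M$: given $p_n\to p^*$ in $M$, it lifts to $\tilde p_n\in\gamma_n\tilde N$ with $\tilde p_n\to\tilde p^*$, runs the same Grassmannian/conjugation argument to conclude that eventually all $\gamma_n\tilde N$ coincide with a single $\gamma_0\tilde N$, and then reads off $\tilde p^*\in\gamma_0\tilde N$. You instead build the quotient $N=\tilde N/\Gamma_{\tilde N}$ and prove it compact; closedness of the image then follows from continuity. Your route yields a bit more, namely an explicit compact domain for the immersion, at the cost of a little extra setup. Both arguments tacitly need $\tilde N$ to be closed in $\tilde M$ (the paper uses this when it says ``since this set is closed''); you correctly flag this and note it holds in the intended application where $\tilde N$ is totally geodesic in a CAT(0) space.
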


\begin{proof}
The covering map $\pi$, when restricted to $\tilde N$, is an immersion. Since $M$ is complete, to prove the lemma, we only need to verify that $N:=\pi(\tilde N)$ is closed.

Let $(p_n)$ be a sequence in $N$ converging to a limit $p^*$ in $M$. We want to show that $p^*\in N$. Fix a lift $\tilde p^*$ of $p^*$ in $\tilde M$ and a sequence of lifts $(\tilde p_n)$ with $\tilde p_n \in \gamma_n\tilde N$ and $\tilde p_n \to \tilde p^*$. Let $V_n = T_{\tilde p_n}(\gamma_n\tilde N) \in Gr_k\tilde M$. Since the fibers of $Gr_k\tilde M$ are compact, there exists a subsequence $(n_i)$ such that $V_{n_i} \to V^*$ in $Gr_k(\tilde M)$.  Then for all sufficiently large $i$ and $j$, we have $d_{Gr_k\tilde M}(V_{n_i}, V_{n_j})<\epsilon_0$. Then, by the assumption of the lemma, there is a fixed $\gamma_0$ such that for all sufficiently large $i$, $V_{n_i}$ is tangent to $\gamma_0\tilde N$. Therefore, the lifts $\tilde p_{n_i}$ lie in $\gamma_0
 \tilde N \cap \overline B_{\epsilon_0}(\tilde p^*)$. Since this set is closed, the limit of the $\tilde p_{n_i}$ belongs to it and, in particular, belongs to $\gamma_0\tilde N$. Hence $p^*\in N$.
\end{proof}
We are now ready to prove Theorem \ref{thm:closing}.

\begin{proof}[Proof of Theorem \ref{thm:closing}]

As noted before, the lift of any {fat} 1-flat is a framed {fat} flat (after equipping it with a frame). Unless $M$ is flat, we can choose a maximal one using Lemma \ref{lem:maximal}.  If $M$ is flat, the theorem is, of course, trivial.

Let $(F^*, \sigma^*)$ be a maximal framed {fat} flat, with cross-section $Y^*$ and $\sigma^*$ based at $\tilde p_0$. Let $\delta>0$ be the value appearing in the statement of Lemma \ref{lem:maximal}. That is, $(F^*,\sigma^*)$ has maximal dimension of its noncompact factor and maximal cross-sectional area among those framed {fat} flats whose cross-section contains a $\delta$-ball. Let $\tilde N = \exp_{\tilde p_0}(V^\infty(\sigma^*))$. This is a complete, totally geodesic submanifold of $\tilde M$ since $\tilde M$ is nonpositively curved and $\tilde N$ is flat.

We will initially give the proof under an assumption, which we will then verify. Recall that since $\tilde N$ lies inside a {fat} flat, we can speak of parallel elements in $Gr_k\tilde M$ when their basepoints are $\delta$-close to $\tilde N$.

\

\noindent\textbf{\textsc{Assumption (A).}} \emph{There exists $\epsilon_0>0$  and $<\delta$ such that whenever, for some $p,q\in\tilde N$ and $\gamma\in\Gamma$, we have $d_{Gr_k\tilde M}(T_p\tilde N,D\gamma(T_q\tilde N)) < \epsilon_0$, $T_p\tilde N$ and $T_q\tilde N$ are parallel.}

\

Under Assumption (A), a short argument using the maximality of $(F^*,\sigma^*)$ allows us to apply Lemma \ref{lem:submfld}.  Since $\epsilon_0<\delta$, we have that whenever $T_p\tilde N$ and $D\gamma(T_q\tilde N)$ are less than $\epsilon_0$ apart, $F^*$ and $\gamma \cdot F^*$ intersect. Since the tangent spaces to their $\mathbb{R}^k$-factors are parallel, $(F^*\cup \gamma\cdot F^*, \sigma^*)$ is a framed {fat} flat. Its cross-section is $Y^*\cup A(Y^*)$, where $A \in \mathrm{Isom}(\mathbb{R}^{n-k})$ is induced by the isometry $\gamma$.

Note that the translational part of $A$ will be zero if and only if $D\gamma(T_q\tilde N) = T_p\tilde N$. By Lemma \ref{lem:translation}, if the translational part of $A$ is non-zero, then $Y^* \cup A(Y^*)$ has strictly larger $(n-k)$-volume than that of $Y^*$. But this contradicts the maximality of $(F^*,\sigma^*)$. Therefore we conclude that $D\gamma(T_q\tilde N) = T_p\tilde N$, and we can invoke Lemma \ref{lem:submfld} to conclude that $N=\pi(\tilde N)$ is a complete, closed, immersed submanifold of $M$. Since the submanifold $\tilde N$ is totally geodesic and flat, so is its projection $N$. Therefore, it contains closed geodesics, any of which is {fat}, finishing the proof.

To complete the proof, it remains to verify that our Assumption (A) holds. Suppose not. Then the following must hold: \emph{There exist sequences $p_n, q_n \in \tilde N$ and a sequence $\gamma_n$ in $\Gamma$ satisfying $d_{Gr_k\tilde M}(T_{p_n}\tilde N, D\gamma_n(T_{q_n}\tilde N))\to 0$ with $T_{p_n}\tilde N$ and $D\gamma_n(T_{q_n}\tilde N)$ not parallel.}

By projecting down to the compact space $Gr_k(M)$, passing to a subsequence, and then lifting back to $Gr_k(\tilde M)$, we can take $p_n=p$ to be constant. Choose a frame $\hat \sigma$ based at $p$ whose first $k$ vectors span $T_p\tilde N$, and choose frames $\sigma_n$ at $q_n$ such that $(\gamma_n \cdot F^*, D\gamma_n\sigma_n)$ are framed {fat} flats and $d_{F\tilde M}(\hat\sigma, D\gamma_n\sigma_n)\to 0$. We can do this because the only restriction on choosing frames is that the first $k$ vectors span the infinite directions of the framed flat, and we know by assumption that these subspaces approach each other. Then $(F^*, \hat \sigma)$ and $(\gamma_n \cdot F^*, D\gamma_n\sigma_n)$ are framed {fat} flats with $Y$-factors in $C^{n-k}_{R,\delta}$ for some $R>0$ and $d_{F\tilde M}(\hat\sigma, D\gamma_n\sigma_n)\to 0$.

By assumption $V^\infty(\hat\sigma) \neq V^\infty(\sigma_n)$, so we can apply Lemma \ref{lem:thicken} and construct in $\tilde M$ a sequence $(X_n,\sigma'_n)$ of framed flat boxes with cross sections $Y_n \in C^{n-k}_{2R,\delta}$ of volume at least $vol_{n-k}(Y^*)+C$ and with lengths going to $\infty$. Project the frames $\sigma'_n$ to the compact $FM$ and pass to a convergent subsequence $\pi_*\sigma'_n\to \pi_*\sigma'$. Using the selection theorem (Theorem \ref{thm:selection}) in $C^{n-k}_{2R,\delta}$, pass to a further subsequence so that $Y_n\to Y'$, where we identify $Y_n$ with a subset of $\mathbb{R}^{n-k}$ using the final $n-k$ vectors of $\sigma'_n$. Note that $vol_{n-k}(Y')$ is strictly greater than that of $Y^*$. Take lifts of this subsequence of frames in $FM$ to frames in $F\tilde M$ which converge to $\sigma'$. We then see that $\sigma'$ lies in a framed {fat} flat with cross-section $Y'$, contradicting the maximality of $(F^*,\sigma^*)$. This contradiction shows that this situation does not arise for maximal framed {fat} flats, and thus Assumption (A) must hold. This finishes the proof.
\end{proof}

%

\subsection{The case $k\geq2$}\label{subsec:further-comments}

The proof of Theorem \ref{thm:closing} demonstrates Theorem \ref{thmB}, which we now restate. 

\begin{thm}\label{thm:kclose}
Suppose that $\tilde M$ contains a {fat} $k$-flat $F$. Then $M$ contains an immersed, closed, totally geodesic {fat} $k$-flat. 
\end{thm}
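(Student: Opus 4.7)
The plan is to note that the proof of Theorem \ref{thm:closing} was written so as to go through essentially verbatim for general $k \geq 2$, since the ingredient lemmas (Lemmas \ref{lem:maximal}, \ref{lem:convex}, \ref{lem:thicken}, \ref{lem:translation}, \ref{lem:submfld}, together with Theorem \ref{thm:selection}) were stated and proved for arbitrary $k$. Only one small elementary step at the end is truly new, in order to cut the flat submanifold produced by that argument down to dimension exactly $k$. If $M$ is flat the conclusion is immediate (pass to a flat torus cover and take any rational $k$-subtorus), so I would assume $M$ is not flat.

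First I would apply Lemma \ref{lem:maximal} to select a maximal framed fat flat $(F^*,\sigma^*)$ with $F^* \cong \mathbb{R}^l \times Y^*$, where $l$ denotes the maximum dimension of a noncompact factor among framed fat flats in $\tilde M$. Since $\tilde M$ contains a fat $k$-flat by hypothesis, $l \geq k$; and Lemma \ref{lem:maximal}(b) then supplies such an $(F^*,\sigma^*)$ whose cross-section $Y^*$ has maximal $(n-l)$-volume among cross-sections containing a fixed ball $B^{\mathbb{R}^{n-l}}_\delta(0)$.

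Next I would run the proof of Theorem \ref{thm:closing} word for word, with $k$ replaced by $l$: set $\tilde N^l := \exp_{\tilde p_0}(V^\infty(\sigma^*))$, and verify the analogue of Assumption (A) for $\tilde N^l$. If Assumption (A) fails, Lemma \ref{lem:thicken} produces a sequence of framed flat boxes of unbounded length whose cross-sections lie in $C^{n-l}_{2R,\delta}$ and have $(n-l)$-volume at least $vol_{n-l}(Y^*) + C$; the Blaschke selection theorem (Theorem \ref{thm:selection}) then extracts a subsequential limit, yielding a framed fat flat with cross-sectional volume strictly exceeding that of $Y^*$ and contradicting maximality. With Assumption (A) in hand, Lemma \ref{lem:translation} rules out nontrivial translational parts when nearby parallel translates of $\tilde N^l$ appear, and Lemma \ref{lem:submfld} delivers a closed, immersed, totally geodesic, flat $l$-submanifold $N^l := \pi(\tilde N^l) \subset M$. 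As in the $k=1$ case, this verification of Assumption (A) is the main obstacle, and it uses all of the compactness machinery of Section \ref{subsec:lemmas}.

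Finally, I would extract the desired $k$-submanifold. Since $N^l$ is a closed flat Riemannian manifold, Bieberbach's theorem furnishes a finite cover $T^l \to N^l$ isometric to a flat torus $\mathbb{R}^l / \Lambda'$. Any primitive rank-$k$ sublattice of $\Lambda'$ spans a $k$-dimensional linear subspace of $\mathbb{R}^l$ whose image in $T^l$ is a totally geodesic flat $k$-subtorus $T^k$; the composition $T^k \hookrightarrow T^l \to N^l \hookrightarrow M$ is the required immersion. A lift of $T^k$ to $\tilde M$ is a $k$-dimensional affine subspace of $\tilde N^l \subset F^*$, and since $F^*$ enjoys a product neighborhood isometric to $\mathbb{R}^l \times B^{\mathbb{R}^{n-l}}_w(0)$, the tubular neighborhood of this $k$-plane contains an isometric copy of $\mathbb{R}^k \times B^{\mathbb{R}^{n-k}}_w(0)$, confirming that its lift is a fat $k$-flat.
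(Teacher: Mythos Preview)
Your proposal is correct and follows essentially the same route as the paper: select a maximal framed fat flat $(F^*,\sigma^*)$ with noncompact factor of dimension $l\geq k$, rerun the proof of Theorem~\ref{thm:closing} verbatim to obtain a closed, immersed, totally geodesic flat $l$-submanifold $N^l\subset M$ with a fat neighborhood, and then cut down to dimension $k$. The only difference is that the paper simply asserts one may ``take a closed, totally geodesic $k$-submanifold of $N$,'' whereas you supply the details via Bieberbach's theorem and a rational subtorus; this is a harmless elaboration of the same idea.
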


\begin{proof}
We can equip $N_w(F)$ with a frame to be a framed {fat} $k$-flat. Since framed {fat} flats exist, there are maximal framed {fat} flats as described by Lemma \ref{lem:maximal}. Let $(F^*,\sigma^*)$ be such a maximal framed {fat} flat. Its noncompact factor has dimension $k^* \geq k$. Let $\tilde N = \exp(\sigma^*)$. The proof of Theorem \ref{thm:closing} shows that $N:=\pi(\tilde N)$ is an immersed, closed, and totally geodesic $k^*$-dimensional submanifold with a flat neighborhood. Then for any dimension less than or equal to $k^*$, in particular $k$, we can take a closed, totally geodesic $k$-submanifold of $N$ as the immersed, totally geodesic {fat} $k$-flat required for the theorem.
\end{proof}

As a consequence, we see that if the unbounded region of zero curvature in $\tilde M$ contains a flat of dimension $k\geq 2$,
then the manifold $M$ contains uncountably many closed geodesics. More generally, we have the following:

\begin{cor}
Suppose that $\tilde M$ contains a {fat} $k$-flat. Then for all $1\leq l<k$, $M$ contains uncountably many immersed, closed, and totally geodesic, flat $l$-submanifolds. 
\end{cor}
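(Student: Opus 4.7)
The plan is to apply Theorem~\ref{thmB} to produce a single closed flat $k$-manifold, pass to a finite cover via Bieberbach's theorem to get a flat $k$-torus, and then exhibit an uncountable family of closed flat $l$-subtori. By Theorem~\ref{thmB}, the fat $k$-flat in $\tilde M$ yields an immersed, closed, totally geodesic, flat $k$-dimensional submanifold $\iota\colon N^k \hookrightarrow M$ whose universal cover $\tilde N^k$ is a fat $k$-flat (so isometric to $\R^k$). Since $N^k$ is then a closed flat Riemannian manifold, Bieberbach's theorem provides a finite cover $p\colon T^k \to N^k$ with $T^k = \R^k/\Lambda$ a flat $k$-torus.

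For each fixed $1 \le l < k$, I would choose any rational $l$-dimensional subspace $V \subset \R^k$, namely the $\R$-span of $l$ linearly independent vectors of $\Lambda$. For every $w \in \R^k$, the affine plane $w + V$ projects to a closed, totally geodesic, flat $l$-subtorus $T^l_w \subset T^k$, and $T^l_{w_1} = T^l_{w_2}$ exactly when $w_1 - w_2 \in V + \Lambda$. Thus the family $\{T^l_w\}$ is indexed by the quotient $\R^k/(V + \Lambda)$, which is a torus of positive dimension $k-l \ge 1$, and hence is uncountable.

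Composing, each $T^l_w$ maps via $\iota \circ p$ to an immersed, closed, totally geodesic, flat $l$-submanifold of $M$. Since $p$ is a finite cover and $\iota$ is an immersion of a compact manifold, $\iota \circ p$ is finite-to-one on points. Because each $T^l_w$ is determined by a single basepoint together with the fixed tangent subspace $V$, each immersed $l$-submanifold of $M$ can arise as the image of only finitely many $T^l_w$'s (coming from deck transformations of $p$ combined with the bounded multiplicity of self-intersections of $\iota$). Therefore the uncountable family upstairs descends to an uncountable family of distinct immersed, closed, totally geodesic, flat $l$-submanifolds of $M$, as required.

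The only delicate point is the last step, where one must verify that the uncountable family in $T^k$ is not collapsed to a countable family in $M$. This is routine rather than hard, since both $p$ and $\iota$ have finite point-fibers, so the identification among the $T^l_w$'s is at most finite-to-one; the substantive input to the proof is entirely contained in the closing theorem for fat flats (Theorem~\ref{thmB}) together with the classical Bieberbach structure theory.
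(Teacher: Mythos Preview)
Your proof is correct and follows the same approach as the paper's: invoke Theorem~\ref{thmB} to obtain an immersed closed flat $k$-submanifold $N\hookrightarrow M$, and then exhibit uncountably many closed totally geodesic flat $l$-submanifolds inside $N$. The paper's proof is a two-line sketch asserting this last fact without further argument; you supply the details via Bieberbach and also verify that the uncountable family does not collapse in $M$ --- a point the paper leaves implicit --- though note that your argument only needs that $\iota\circ p$ is finite-to-one (which you correctly establish from compactness of $N$), not the stronger ``bounded multiplicity'' you mention parenthetically.
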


\begin{proof}
Using Theorem \ref{thm:kclose}, there is an immersed, closed, and totally geodesic, flat $k$-submanifold $N$ in $M$. For each $1\leq l<k$, such a manifold has uncountably many closed, totally geodesic $l$-submanifolds. These are the submanifolds we want.
\end{proof}

\section{Dynamics of the geodesic flow.} \label{dynamics}

We turn our attention to dynamical properties of the geodesic flow for rank $1$ manifolds where the higher rank geodesics are precisely those coming from the zero curvature neighborhoods of {fat} flats, particularly, the examples provided by Theorem \ref{thmA}. The presence of the {fat} flat rules out the possibility of applying many of the powerful techniques of hyperbolic dynamics (e.g., establishing conjugacy with a suspension flow over a shift of finite type or establishing the specification property \cite{CLT}). Despite these difficulties, recent work by Burns, Climenhaga, Fisher, and Thompson \cite{BCFT} has developed the theory of equilibrium states for rank $1$ geodesic flows.  We give a version of the results proved there, adapted to the particular case of the examples introduced in this paper. We are able to obtain stronger conclusions than in the general case due to the explicit and simple characterization of the singular set. Definitions of the terms that appear in the following theorem are given in \cite{BCFT}.

\begin{thm}
Let  $(\bar M,g)$ be a manifold given by Theorem A. Let $h$ be the topological entropy of the geodesic flow. We have the following properties:
\begin{enumerate}
\item If $\varphi$ is H\"older continuous, and $\sup_{x\in \mathrm{Sing}}\varphi(x)-\inf_{x\in T^1M}\varphi(x) < h$, then $\varphi$ has a unique equilibrium state. This measure is fully supported;
\item Let $\varphi^u$ be the geometric potential. The potentials $q \varphi^u$ have a unique equilibrium state for all $q<1$. This measure is fully supported;
\item The Liouville measure is ergodic, and is an equilibrium state for $\varphi^u$. Any measure supported on the singular set is also an equilibrium state for $\varphi^u$;
\item Weighted closed geodesics equidistribute to the unique equilibrium measures in (1) and (2).
\end{enumerate}
\end{thm}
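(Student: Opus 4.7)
The plan is to show that the manifolds from Theorem A fit into the framework of \cite{BCFT}, and that the explicit description of the singular set for these examples allows us to verify the relevant pressure-gap hypotheses. I would begin by describing $\mathrm{Sing}\subset T^1\bar M$ explicitly. A unit vector $v$ lies in $\mathrm{Sing}$ iff its geodesic admits a nontrivial parallel Jacobi field. Since the curvature is strictly negative outside the flat neighborhood $N$ of $\gamma$ (by the construction in \S\ref{sec:warping}, together with the fact that the interpolation functions of Proposition \ref{interpolating-functions} can be taken strictly convex outside the flat core), any such geodesic must lie in $N$ for all time. Lifting to $\tilde N\cong \mathbb R\times B_w$, the only complete geodesics in $\tilde N$ are translates of the $\mathbb R$-axis. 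Quotienting, $\mathrm{Sing}$ consists of the two parallel sections $\pm\partial_z$ over $N$, and the restriction of the geodesic flow is topologically conjugate to the suspension of the irrational rotation $\rho$ acting isometrically on $B_w$; in particular $h_{\mathrm{top}}(\mathrm{Sing})=0$.

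For (1), once $h_{\mathrm{top}}(\mathrm{Sing})=0$ is in hand, the variational principle gives
\[ P(\mathrm{Sing},\varphi)=\sup_{\mu(\mathrm{Sing})=1}\Bigl(h_\mu+\int\varphi\,d\mu\Bigr)\leq \sup_{\mathrm{Sing}}\varphi, \]
while testing the full pressure against a measure of maximal entropy yields $P(\varphi)\geq h+\inf\varphi$. The hypothesis $\sup_{\mathrm{Sing}}\varphi-\inf\varphi<h$ therefore produces the pressure gap $P(\mathrm{Sing},\varphi)<P(\varphi)$ that drives the main uniqueness and full-support theorems of \cite{BCFT}, and (1) follows immediately by applying those results. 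For (2), I would use that the geometric potential $\varphi^u$ vanishes on $\mathrm{Sing}$ (the orbits there are isometric, so $\det d\phi^t|_{E^u}\equiv 1$ in any reasonable extension of $E^u$); hence $\sup_{\mathrm{Sing}} q\varphi^u=0$ for every $q$, while $P(q\varphi^u)>0$ for $q<1$ by the Pesin identity $P(\varphi^u)=0$ together with strict convexity of $q\mapsto P(q\varphi^u)$. This verifies the pressure gap for $q\varphi^u$, and \cite{BCFT} again provides uniqueness and full support.

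For (3), Liouville measure is an equilibrium state for $\varphi^u$ by the Pesin entropy formula; any flow-invariant measure $\mu$ supported on $\mathrm{Sing}$ satisfies $h_\mu\leq h_{\mathrm{top}}(\mathrm{Sing})=0$ and $\int\varphi^u\,d\mu=0$, so it also realizes the supremum $P(\varphi^u)=0$ and is an equilibrium state. Ergodicity of Liouville is the most delicate point: it follows from the Hopf argument applied to the regular set, which has full Liouville measure since $\mathrm{Sing}$ is a proper submanifold of $T^1\bar M$ of codimension $2n$. Here the explicit description of $\mathrm{Sing}$ is essential: the usual nonuniform hyperbolicity machinery (Pesin stable/unstable manifold theory and absolute continuity) goes through on the regular set exactly as in \cite{BCFT}. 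Finally, (4) is a standard byproduct of uniqueness of equilibrium states in the presence of a pressure gap, via the weighted orbit-counting results of \cite{BCFT}.

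The main obstacle I anticipate is the Liouville ergodicity statement in (3): for general rank-$1$ nonpositively curved manifolds this is a deep issue, but the explicit singular set here (a lower-dimensional isometric flow) ensures that the Pesin--Hopf argument of \cite{BCFT} applies cleanly. Everything else is a bookkeeping check that our examples meet their hypotheses, once the zero-entropy statement $h_{\mathrm{top}}(\mathrm{Sing})=0$ is established.
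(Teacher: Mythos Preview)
Your approach is essentially that of the paper---reduce everything to the pressure-gap hypotheses of \cite{BCFT} after showing $h_{\mathrm{top}}(\Sing)=0$ and $\varphi^u|_{\Sing}=0$---but two steps are handled imprecisely.

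For (2), the claim that $P(q\varphi^u)>0$ for $q<1$ does not follow from ``$P(\varphi^u)=0$ together with strict convexity of $q\mapsto P(q\varphi^u)$'': a strictly convex function with $P(0)=h>0$ and $P(1)=0$ can still dip below zero on $(0,1)$ (e.g.\ $q\mapsto (q-1)(q-\tfrac12)$). The paper instead tests the pressure directly against the Liouville measure and uses the Pesin entropy formula:
\[
h_{\mu_L}+\int q\varphi^u\,d\mu_L \;=\; h_{\mu_L}-q\,\lambda^+(\mu_L)\;>\;h_{\mu_L}-\lambda^+(\mu_L)\;=\;0,
\]
which by the variational principle gives $P(q\varphi^u)>0$ for every $q<1$. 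You already invoke $\mu_L$ and Pesin for (3); the same ingredients should be used here.

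For (4), the equidistribution result in \cite{BCFT} is stated for weighted \emph{regular} closed geodesics, not all closed geodesics. The paper upgrades this to the full collection by invoking the specific conclusion of Theorem~\ref{thmA} that the twisted cylinder contains exactly one closed singular geodesic (the core $\gamma$), which therefore contributes no mass in the limit. This is precisely where the ``twisted'' hypothesis is used, and you should make that step explicit rather than calling it a standard byproduct.
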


Property (1) is true for any rank $1$ manifold whose singular set has zero entropy. If, additionally, $\varphi^u$ vanishes on the singular set, then property (2) holds. This is always the case for a manifold which has all sectional curvatures negative away from the zero-curvature neighborhoods of some fat flats. Property (4) requires the cylindrical neighborhood to be twisted, or else there are uncountably many singular geodesics. This is the main additional property that one gains in the twisted cylinder case.

\begin{proof}
Properties (1), (2), and (4) are obtained from the main theorems of \cite{BCFT}. Write $\Sing$ for the singular set,  $h(\Sing)$ for the topological entropy of the flow restricted to the singular set, $P(\varphi)$ for the topological pressure and $P(\Sing, \varphi)$ for the topological pressure for the singular set. 

For (1), it is shown in \cite{BCFT} that, for a closed rank $1$ manifold $M$ and a H\"older continuous potential function $\varphi$, if $\sup_{x\in \Sing}\varphi(x)-\inf_{x\in T^1M}\varphi(x) < h - h(\Sing)$, then $\varphi$ has a unique equilibrium state, and this equilibrium state is fully supported. The set $\Sing$ can be characterized as those $x \in T^1M$ such that in the universal cover, the geodesic passing through $\tilde x$ is contained in the zero-curvature neighborhood of the fat flat. This set clearly has zero entropy since the flat geometry shows that a uniformly bounded number of geodesics is sufficient to $(t, \epsilon)$-span the singular set for any $t$.

For (2), we use the result from \cite{BCFT} that for $q \varphi^u$ to have a unique equilibrium state, it suffices to show that $P(\Sing, q \varphi^u)< P(q \varphi^u)$.  We know that $h(\Sing)=0$ and $\varphi^u(x)=0$ for all $x\in \Sing$. Hence, $P(\Sing, q \varphi^u)=0$ for all $q$.  
We show that $P(q \varphi^u)>0$ for all $q\in (-\infty, 1)$. Let $\mu_L$ denote the Liouville measure. It follows from the Pesin entropy formula that
\[
h_{\mu_L} + \int q \varphi^u d\mu_L = h_{\mu_L} - q \lambda^+(\mu_L)  >  h_{\mu_L} -  \lambda^+(\mu_L)  = 0.
\]
By the variational principle, $P(q\varphi^u)\geq h_{\mu_L} - \int q \varphi^u d\mu_L > 0$. Thus, $P(\Sing, q\varphi^u)< P(q \varphi^u)$ for all $q\in (-\infty, 1)$. 

For (3), it is well known that the Pesin entropy formula implies that $\mu_L$ is an equilibrium state for $\varphi^u$ and that the restriction of $\mu_L$ to the regular set is ergodic. To show that $\mu_L$ is ergodic, we only require that $\mu_L(\Sing)=0$. This is clear from the characterization of $\Sing$ given above. Finally, let $\mu$ be a measure supported on $\Sing$ (e.g., the measure corresponding to the closed geodesic $\gamma$). By the variational principle, $h_{\mu} \leq h(\Sing)=0$, and $\varphi^u$ vanishes on $\Sing$, so $h_{\mu} + \int \varphi^u d\mu  = 0$, and by the Margulis--Ruelle inequality and Pesin formula,  $P(\varphi^u) =0$. Thus $\mu$ is an equilibrium state for $\varphi^u$.

For (4), it is shown in \cite{BCFT} that weighted \emph{regular} closed geodesics equidistribute to the unique equilibrium measures in (1) and (2). Since there is only one closed geodesic that is not regular, this geodesic contributes no mass in the limit, and so in this situation, we have that the collection of \emph{all} weighted closed geodesics equidistributes to the unique equilibrium measures in (1) and (2).
\end{proof}

\noindent {\bf Acknowledgments:} The authors would like to thank the anonymous referee for several helpful suggestions.

\bibliographystyle{alpha}
\bibliography{biblio}

\end{document}